\documentclass[10pt,oneside, reqno]{amsart}
\usepackage[utf8]{inputenc}
\usepackage[T1]{fontenc} 
\usepackage{amsmath, amscd}
\usepackage{amsthm}
\usepackage{amsfonts}
\usepackage{bm}
\usepackage{amssymb}
\usepackage{geometry}
\usepackage{graphicx}
\usepackage{url, enumerate}
\usepackage{hyperref} 
\usepackage{tikz}
\usetikzlibrary{positioning}
\usepackage{subcaption}
\usepackage{nameref}
\usepackage[shortlabels]{enumitem}

\theoremstyle{plain}
\newtheorem{theorem}{Theorem}[section]
\newtheorem*{theorem*}{Theorem}
\newtheorem{lemma}[theorem]{Lemma}
\newtheorem{fact}[theorem]{Fact}
\newtheorem{proposition}[theorem]{Proposition}
\newtheorem{cor}[theorem]{Corollary}

\theoremstyle{definition}
\newtheorem{definition}[theorem]{Definition}
\newtheorem{remark}[theorem]{Remark}
\newtheorem{example}[theorem]{Example}
\newtheorem*{notation}{Notation}

\newtheorem*{property}{Property}

\def\forkindep{\mathrel{\raise0.2ex\hbox{\ooalign{\hidewidth$\vert$\hidewidth\cr\raise-0.9ex\hbox{$\smile$}}}}}
\def\Ind#1#2{#1\setbox0=\hbox{$#1x$}\kern\wd0\hbox to 0pt{\hss$#1\mid$\hss}
\lower.9\ht0\hbox to 0pt{\hss$#1\smile$\hss}\kern\wd0}
\def\Notind#1#2{#1\setbox0=\hbox{$#1x$}\kern\wd0\hbox to 0pt{\mathchardef
\nn="3236\hss$#1\nn$\kern1.4\wd0\hss}\hbox to
0pt{\hss$#1\mid$\hss}\lower.9\ht0
\hbox to 0pt{\hss$#1\smile$\hss}\kern\wd0}
\def\ind{\mathop{\mathpalette\Ind{}}}

\makeatletter
\def\namedlabel#1#2{\begingroup
    #2%
    \def\@currentlabel{#2}%
    \phantomsection\label{#1}\endgroup
}
\makeatother

\DeclareMathOperator{\tp}{tp}
\DeclareMathOperator{\qftp}{qftp}

\DeclareMathOperator{\dcl}{dcl}
\DeclareMathOperator{\acl}{acl}
\def\acleq{\text{acl}^{eq}}

\DeclareMathOperator{\aut}{Aut}
\DeclareMathOperator{\cb}{Cb}
\DeclareMathOperator{\hb}{Hb}
\DeclareMathOperator{\bdd}{bdd}
\def\sseq{\subseteq}
\def\eq{^{eq}}

\def\rest{\upharpoonright}

\def\G00{G^{00}}
\newcommand{\opart}[1]{\widetilde{#1}}

\def\frP{\mathfrak{P}}

\def\mcL{\mathcal{L}}
\def\mcC{\mathcal{C}}

\def\mcF{\mathcal{F}}
\def\mcG{\mathcal{G}}

\title{Transfer of generalized amalgamation in simple theories
}
\author{Baptiste Schilling}
\date{}

\begin{document}

\begin{abstract}
We give an abstract framework to transfer generalized amalgamation from a simple theory to another, and we apply it to theories of bounded PAC structures, of fields with operators and of lovely pairs.
We show in particular that bounded pseudo-algebraically closed fields have generalized amalgamation, regardless of their imperfection degree.
\end{abstract}

\maketitle

\let\thefootnote\relax\footnotetext{\textit{Date.} \today}
\footnotetext{2020 \textit{Mathematics Subject Classification.}Primary 03C45, 03C60, Secondary 12H05, 12L12.}
\footnotetext{\textit{Key words and phrases.} Model Theory, Simple Theories, Generalized Amalgamation, Pseudo-Algebraically Closed.}

\section{Introduction}

In \cite{KimPillay1997}, Kim and Pillay characterized simple theories by the existence of a well-behaved independence relation, called nonforking independence. 
This relation satisfies several properties, one of which - the independence theorem - can be rewritten as an existence property on three types.
In this regard, the property of $n$-existence for $n$ greater than $3$ was introduced by Shelah in \cite{Shelah_83_ClassificationTheory}, and was later used by Kolesnikov in \cite{Kolesnikov_2004_thesis} to refine the classification of simple unstable theories.
Notable results were obtained in \cite{DePiro_Kim_Millar_2005}, in which the authors show that all stable theories with elimination of imaginaries have $n$-existence for all $n$ over elementary substructures, and link the $4$-existence property to the existence of a hyperdefinable group out of a group configuration diagram.

Many simple theories have been obtained by modifying some already well-studied stable theories, for example by expanding their language in a controlled way.
Among others, this is the case for the theory ACFA of algebraically closed fields endowed with a generic automorphism, for which Chatzidakis and Hrushowski showed in \cite{ChatzidakisHrushovski_1999} that it has $n$-existence for every integer $n$, and hence that it is simple.

Another well-studied theory of fields is that of pseudo-algebraically closed fields: these objects were introduced in \cite{Ax1973} by Ax as fields $K$ such that any absolutely irreducible variety defined over $K$ has a $K$-rational point.
Hrushovski showed in \cite{HrushovskiPFFieldsRelStructures} that perfect bounded pseudo-algebraically closed fields are simple, which was later generalized to all bounded pseudo-algebraically closed fields by Chatzidakis and Pillay in \cite{Chatzidakis_Pillay_1998}.
Conversely, Chatzidakis showed in \cite{Chatzidakis_1999} that unbounded pseudo-algebraically closed fields are not simple.

In his initial proof of simplicity in \cite{HrushovskiPFFieldsRelStructures}, Hrushovski defined a general notion of pseudo-algebraically closed (PAC) substructures in a given strongly minimal theory.
In \cite{Pillay_Polkowska_2006}, Pillay and Polkoswska extended Hrushovski's notion of pseudo-algebraically closed substructure to stable theories. 
Polkowska then showed in \cite{Polkowska_2007} that bounded PAC substructures are simple by proving that they satisfy the independence theorem, and hence by characterizing the nonforking independence in bounded PAC substructures.

\smallskip

In this note, we generalize this result of Polkowska to show that bounded PAC substructures of a stable theory have $n$-existence for all $n$ greater than $3$. We do so by presenting a framework allowing the transfer of $n$-existence from a simple theory $T_0$ to another theory $T_1$.

We start in section \ref{sec:GeneralizedAmalgamation} by defining generalized amalgamation systems and the notion of $n$-existence, as well as stating some of the main results of the literature. 
In section \ref{sec:TransferNExistence} we introduce an abstract setting in which it is possible to transfer generalized existence from a simple theory $T_0$ to another simple theory $T_1$  in a expanded language such that every model of $T_1$ embeds into a model of $T_0$. 
This setting takes the form of a series of strong hypotheses, under which we are able to prove the following theorem:

\medskip

\paragraph*{\textbf{Theorem }(Theorem \ref{thm:AbstractAmalgamationTransfer})}
\textit{
Let $T_0$ and $T_1$ be two simple theories such that $T_0$ eliminates quantifiers and  $T_0^\forall$ is contained in $T_1$. 
Fix a model $N$ of $T_1$ contained in a model $M$ of $T_0$, both sufficiently saturated.
Assume that the hypotheses of subsection \ref{sec:MainThm} hold (i.e. the hypotheses \ref{hyp:StrongBdd} to \ref{hyp:ContinuityFSystems}) and let $Q$ be a subset of $N$ algebraically closed in the theory $T_1$.
If $T_0$ has $n$-existence over the algebraic closure $\acl_0(Q)$ of $Q$ in $T_0$, then $T_1$ has $n$-existence over $Q$.
}
\medskip

The hypotheses \ref{hyp:StrongBdd} to \ref{hyp:ContinuityFSystems} are chosen so that the proof of the theorem works and are not necessarily intuitive, which is why we then discuss some ways in which they arise more naturally, leaving \ref{hyp:ContinuityFSystems} as the only one whose verification presents difficulties.
The rest of this note is then dedicated to giving examples of applications in each of these cases.

\smallskip

In Section \ref{sec:BddPAC}, we apply the above theorem to the case where $T_1$ is the theory of a bounded PAC substructure of a structure whose theory $T_0$ is stable.
Our context is very close to the one developped in  \cite{Polkowska_2007}, where Polkowska showed that the theory $T_1$ is simple; we generalize this result to obtain that $T_1$ has $n$-existence for all $n$.
In our context however, elimination of imaginaries is replaced by two weaker assumptions: a technical one called (\nameref{Star}) (see Subsection \ref{sec:RelUniq}) as well as the fact that types over algebraically closed sets are stationary, which has been for example considered by Bartnick in \cite{bartnick_2024}, among others.

\medskip

\paragraph{\textbf{Corollary} (Corollary \ref{cor:nExistencePAC})}
\textit{
Let $T_0$ be a stable theory with quantifier elimination, in which the PAC-property is first-order (see Definition \ref{def:PACFO}) and in which types over algebraically closed sets are stationary.
Let $T_1$ be the theory of a bounded PAC substructure of a model of $T_0$. 
Assume that Property (\nameref{Star}) holds.
If $Q$ is an existentially closed subset of a model of $T_1$ containing an elementary substructure such that $T_0$ has $n$-existence over $\acl_0(Q)$, then $T_1$ has $n$-existence over $Q$.
}

\medskip

We then consider bounded PAC substructures in concrete theories of stable fields, namely that of differentially and separably closed fields, to obtain the following result:

\medskip

\paragraph{\textbf{Corollary} (Corollaries \ref{SCFfinite}, \ref{cor:DCF0PAC} and \ref{cor:namalgSCFinfiniPAC})}
\textit{
All bounded PAC substructures of the following theories have $n$-existence over elementary substructures for all $n$:
\begin{enumerate}
	\item The theory $DCF_{0,m}$ of differentially closed fields of characteristic $0$ with $m$ commuting derivations, in the language of rings with $m$ derivations.
	\item The theory $SCF_{p,e}$ ($e$ finite) of separably closed fields of characteristic $p$ and imperfection degree $e$, in the language of rings with $\lambda$-functions relative to a fixed $p$-basis.
	\item The theory $SCF_{p, \infty}$ of separably closed fields of characteristic $p$ and infinite imperfection degree, in the language of rings with generalized $\lambda$-functions.
\end{enumerate}
}

\medskip

In particular, the characterization of PAC substructures of $SCF_{p,e}$ and $SCF_{p, \infty}$ obtained by Hoffmann and Kowalski in \cite{Hoffmann_Kowalski_2023} allows us to conclude that all pseudo-algebraically closed fields have $n$-existence over elementary substructures for all $n$.

In Section \ref{sec:FieldsOperators}, we consider the case of an expansion of $T_0$ by unary function symbols, with a focus on theories of fields with operators.
Our context allows us to generalize the proof of $n$-existence in the theory ACFA in \cite{ChatzidakisHrushovski_1999} in order to obtain $n$-existence in several expansions of the theories of algebraically and separably closed fields by unary function symbols.

Finally in Section \ref{sec:PairsOfStructures}, we propose a context containing theories several theories of pairs (namely $H$-structures on supersimple theories \cite{BerensteinCarmonaVassiliev_2017}, lovely pairs of simple theories \cite{BenyaacovPillayVassiliev2003,Poizat_1983} and $\kappa$-PAC beautiful pairs in a stable theory \cite{Pillay_Polkowska_2006, Polkowska_2007}), and we show that $n$-existence in the initial theory transfers to such a theory of pairs.

\bigskip
Throughout this note, we assume a certain familiarity with the main tools of geometric model theory.

\subsection*{Acknowledgments}
I would like to thank my supervisors, Thomas Blossier and Amador Martin-Pizarro, for their help and advice during the redaction of this note.

\smallskip

\section{Generalized amalgamation}\label{sec:GeneralizedAmalgamation}

\subsection{Definitions}		
\label{sec:DefAmalgamation}

We consider a sufficiently saturated (infinite) model $M$ of a complete theory $T$  in a language $\mcL$. As usual, all subsets and tuples are small with respect to the saturation of $M$, unless stated. 
In all of this note, a tuple need not be finite and will be denoted by lowercase letters.

Kim-Pillay's theorem \cite{KimPillay1997} states that the theory $T$ is simple if and only if  there is a ternary relation $\ind$ between subsets (or rather tuples) of $M$ satisfying the following properties: 

\begin{itemize}[leftmargin = 0 cm]
	\item[]\namedlabel{ppt:invar}{INVARIANCE} $~$ If $(a,b,c) \equiv (a',b',c')$,  then $a\ind_c b$ holds if and only if $a'\ind_{c'} b'$.
	
	\item[]\namedlabel{ppt:sym}{SYMMETRY}  $~$ If $a\ind_c b$, then $b\ind_c a$.
	
	\item[]\namedlabel{ppt:finchar}{FINITE CHARACTER}  $~$ $a\ind_c b$ if and only if $a_0 \ind _c b$ for all finite $a_0$ contained in $a$.
	
	\item[]\namedlabel{ppt:locchar}{LOCAL CHARACTER}  $~$ For any $a$ and $b$ there exist $c$ contained in $b$ such that $|c|\leq |T|$ and $a\ind_c b$.
	
	\item[]\namedlabel{ppt:transit}{TRANSITIVITY}  $~$ If $a\ind_c b$ and $a\ind_{bc} d$, then $a\ind_c bd$.
	
	\item[]\namedlabel{ppt:monot}{MONOTONICITY}  $~$ If $a\ind_c bd$, then $a\ind_c b$.
	
	\item[]\namedlabel{ppt:basemonot}{BASE MONOTONICITY}  $~$ If $a\ind_c bd$, then $a\ind_{cb} d$.
	
	\item[]\namedlabel{ppt:extension}{EXTENSION}  $~$ For all $a,b$ and $c$ there exist an $a'$ with $a'\equiv_c a$ such that $a'\ind_c b$.

	\item[]\namedlabel{ppt:indepthm}{INDEPENDENCE THEOREM} (over any elementary substructure $c$) $~$ If $a_1\equiv_c a_2$, $a_1\ind_c b_1$, $a_2\ind_c b_2$ and $b_1\ind_c b_2$ hold, then there exist $a$ such that $a\equiv_{b_1}a_1$, $a\equiv_{b_2} a_2$ and $a\ind_c b_1b_2$.
\end{itemize}
In this case, the relation $\ind$ coincides with the nonforking independence of the theory $T$ and satisfies 
\begin{itemize}[leftmargin = 0 cm]
    	\item[]\namedlabel{ppt:closure}{CLOSURE}  $~$ $a\ind_c b$ if and only if $\acl(ac)\ind_{\acl(c)} \acl(bc)$.
    
\end{itemize}

Moreover, the simple theory is stable if and only if nonforking independence $\ind$ also satisfies the following: 

\begin{itemize}[leftmargin = 0 cm]
    \item[]\namedlabel{ppt:stationarity}{STATIONARITY} (over any elementary substructure $c$)  $~$ If $a \equiv_c a'$, $a\ind_c b$ and $a'\ind_c b$ hold, then $a\equiv_b a'$.
\end{itemize}

\begin{remark}\label{rk:3AmalgEquivIndepThm}
Given a simple theory $T$, the independence theorem over any algebraically closed subset $C$ can be reformulated as follows (which is usually referred to as \emph{3-existence}): 
For all types  $p_1(x_1)$, $p_2(x_2)$, $p_3(x_3)$, $p_{1,2}(x_{1,2})$, $p_{1,3}(x_{1,3})$ and $p_{2,3}(x_{2.3})$ over $C$ satisfying the following conditions:
\begin{itemize}
    \item For all $i<j$, both tuples of variables $x_i$ and $x_j$ are contained in $x_{i,j}$.
    \item For all $i<j$, the realizations of $p_i$ and of $p_{i,j}$ are algebraically closed.
    \item $p_{i,j}(x_{i,j}) \models p_i(x_i)\cup p_j(x_j) \cup "x_i \ind_C x_j"\cup "x_{i,j}=\acl(x_ix_j)"$.
\end{itemize}
There exists a type $p_{1,2,3}(x_{1,2,3})$ extending all $p_{i,j}$'s such that whenever $a_{1,2,3}$ realizes $p_{1,2,3}$, setting $a_{i}=a_{1,2,3}\rest x_i$, we have that the family $a_1, a_2, a_3$ is independent over $C$.

Indeed, it suffices to set $p_{1,2}=\tp(\acl(C,b_1, b_2)/C)$, $p_{1,3} = \tp(\acl(C,b_1, a_1)/C)$ and $p_{2,3}= \tp(\acl(C, b_2, a_2)/C)$ and $p_{1,2,3} = \tp(\acl(C,a, b_1, b_2)/C)$ to see the equivalence, where $p_1=\tp(\acl(C, b_1)/C)$, $p_2=\tp(\acl(C, b_2)/C)$ and $p_3= \tp(\acl(C,a_1)/C)$ (which is also equal to $\tp(\acl(C,a_2)/C)$).
\end{remark}

We can now describe the $n$-existence property, which generalizes  the $3$-existence property for $n\geq 3$. For this, we need some auxiliary notions for our simple theory $T$.

\begin{notation}
For any positive integer $n\ge 1$, denote by $[n]$ the interval $\{1,...,n\}$ and by $\mathfrak{P}([n])^-$ the set $\mathfrak{P}([n]) \setminus \{[n]\}$.

\end{notation}

\begin{definition}\label{AmalgSyst}
Let $W$ be a finite subset of $\mathfrak{P}(\mathbb{N})$ which is closed under subsets. 
A \newline  \emph{$W$-amalgamation system} over a set $C$ of parameters is a family $S_W$ of types over  $C$ of the form  $S_W=(p_w(x_w), w\in W)$  verifying the following conditions:
\begin{enumerate}
	\item \label{axiom:AmalgSyst1} For any two distinct variables $y, z$ appearing in $x_w$, we have $p_w\vdash y\neq z$.
	
	\item \label{axiom:AmalgSyst2} For all $v$ and $w$ in $W$ we have $x_v\cap x_w = x_{v\cap w}$. If $v$ is contained in $w$, then $p_w(x_w) \rest x_v = p_v(x_v)$.
	
	\item Given an element $w$ of $W$ and a realization $b_w$ of $p_w(x_w)$, set  $b_i=b_w \rest x_{\{i\}}$ for $i$ in $w$.  The following holds:
	\begin{enumerate}
		\item \label{axiom:AmalgSystIndependence} \emph{Independence}: The family $(b_i)_{i\in w}$ is independent over $C$.
		
		\item \label{axiom:AmalgSystControl} \emph{Controlled character}: The tuple $b_w$ is algebraic over $C \cup \{b_i, i\in w\}$.
	\end{enumerate}
\end{enumerate}

We say that $S$ is an $n$-amalgamation system if $W$ is contained in $\mathfrak{P}([n])$.
The system $S$ is \emph{complete} if $W$ has a maximum $\top$ (with respect to inclusion $\sseq$). It can be \emph{completed} if there exists a type $p_\top$ such that $S \cup \{p_\top\}$ is a complete amalgamation system, where $\top$ is the maximum of $W\cup \{\top\}$. 
In this case, the type $p_\top$ is a \emph{completion} of the system.

The system $S$ is \emph{algebraically closed} if $b_w = \acl(C\cup\{b_i, i\in w\})$ for every realization $b_w$ of $p_w$, with  $w$ in $W$. 
In particular, if $S$ is algebraically closed, then $C$ is algebraically closed and is contained in any realization of $p_w$ for all $w\in W$.
\end{definition}

Notice that the existence of a completion to an algebraically closed amalgamation system implies the existence of an algebraically closed completion: simply take the type of the algebraic closure of any realization of the completion.
Moreover, if $p_\top(x_\top)$ is a completion of the (not necessarily algebraically closed) amalgamation system $S=(p_w(x_w), w\in W)$, then the union $\bigcup_{w\in W} x_w$ of all the variables in $S$ is well defined and contained in $x_\top$.
It follows that the type $p_\top(x_\top)\rest \bigcup_{w\in W} x_w$ is also a completion of $S$.
In particular, it is enough to look for completions $p_\top(x_\top)$ with $x_\top =\bigcup_{w\in W} x_w$: we say that such a completion is \textit{minimal}.

\bigskip

\begin{center}
\begin{tikzpicture}

\node (A0) at (2.4,-1.3) {$p_\emptyset$};

\node (A1) {$p_1$};
\node (A2) [right= of A1] {$p_2$};
\node (A3) [right= of A2] {$p_3$};
\node (A4) [right= of A3] {$p_4$};

\node (A12) at (-2,1.5) {$p_{12}$};
\node (A13) [right= of A12] {$p_{13}$};
\node (A23) [right= of A13] {$p_{23}$};
\node (A14) [right= of A23] {$p_{14}$};
\node (A24) [right= of A14] {$p_{24}$};
\node (A34) [right= of A24] {$p_{34}$};

\node (A123) at (-0.5,3) {$p_{123}$};
\node (A124) [right= of A123] {$p_{124}$};
\node (A134) [right= of A124] {$p_{134}$};
\node (A234) [right= of A134] {$p_{234}$};

\node (A1234) at (2.4,4.5) {?};

\draw [->] (A0) -- (A1) node  {};
\draw [->] (A0) -- (A2) node  {};
\draw [->] (A0) -- (A3) node  {};
\draw [->] (A0) -- (A4) node  {};

\draw[->] (A1) -- (A12) node {};
\draw[->] (A1) -- (A13) node {};
\draw[->] (A1) -- (A14) node {};
\draw[->] (A2) -- (A12) node {};
\draw[->] (A2) -- (A23) node {};
\draw[->] (A2) -- (A24) node {};
\draw[->] (A3) -- (A13) node {};
\draw[->] (A3) -- (A23) node {};
\draw[->] (A3) -- (A34) node {};
\draw[->] (A4) -- (A14) node {};
\draw[->] (A4) -- (A24) node {};
\draw[->] (A4) -- (A34) node {};

\draw[->] (A12) -- (A123) node {};
\draw[->] (A13) -- (A123) node {};
\draw[->] (A23) -- (A123) node {};
\draw[->] (A12) -- (A124) node {};
\draw[->] (A14) -- (A124) node {};
\draw[->] (A24) -- (A124) node {};
\draw[->] (A13) -- (A134) node {};
\draw[->] (A14) -- (A134) node {};
\draw[->] (A34) -- (A134) node {};
\draw[->] (A23) -- (A234) node {};
\draw[->] (A24) -- (A234) node {};
\draw[->] (A34) -- (A234) node {};

\draw[->, dashed] (A123) -- (A1234) node  {};
\draw[->, dashed] (A124) -- (A1234) node  {};
\draw[->, dashed] (A134) -- (A1234) node  {};
\draw[->, dashed] (A234) -- (A1234) node  {};

\end{tikzpicture}
\end{center}
\nobreak
\begin{center}
A $4$-amalgamation system
\end{center}

\bigskip

\begin{definition}\label{nExist} \label{nUniq} 
We say that the theory $T$ has $W$-\textit{existence} (resp. $W$-\emph{uniqueness}) over a set of parameters $C$ if every algebraically closed $W$-amalgamation system over $C$ can be completed (resp. has at most one minimal completion).
Moreover, we say that $T$ has $n$-\textit{existence} (resp. $n$-\emph{uniqueness}) over $C$ if it has $W$-amalgamation (resp. $W$-uniqueness) over $C$ for all subsets $W$ of $\frP([n])$ that are closed under subsets.
\end{definition}

\begin{remark}
We chose to use the term "$n$-existence" instead of the more commonly used "$n$-amalgamation" in order to highlight the difference between $n$-existence and $n$-uniqueness.
\end{remark}

\noindent

It is immediate that $2$-existence is equivalent to the extension property. 
Remark \ref{rk:3AmalgEquivIndepThm} above shows that $3$-existence is equivalent to the independence theorem. 
In particular, every simple theory has $3$-existence over elementary substructures.
We also see that $2$-uniqueness over $C$ is equivalent to the stationarity of types over $C$; it follows that all stable theories with elimination of imaginaries have $2$-uniqueness over algebraically closed sets.

Note also that if $n$ equals $2$ or $3$, then $n$-existence implies that every $n$-amalgamation system has a completion (and not only the ones that are algebraically closed); this implication does not hold for $n \geq 4$ (for a counterexample, see \cite[example 1.1]{DePiro_Kim_Millar_2005}).

\begin{remark}
The $n$-existence property is equivalent to having $\mathfrak{P}([k])^-$-existence for all $k\leq n$.
In particular when trying to show $n$-existence for all $n$, it is sufficient to only consider $\mathfrak{P}([n])^-$-systems.
\end{remark}

\medskip

There are two differences between the notions of amalgamation presented here and others found in the literature.
The first is that we do not include the property of being algebraically closed in the definition of an amalgamation system: this will
allow us to consider amalgamation systems which are not algebraically closed without introducing a new definition.
The second difference is that we work with the real algebraic closure $\acl$ instead of the bounded closure $\bdd$ (as in \cite[Definition 1.4]{DePiro_Kim_Millar_2005}), or the imaginary algebraic closure $\acl\eq$ if the theory eliminates hyperimaginaries.
We always use the term "$n$-existence" to refer to the amalgamation of Definition \ref{nExist}; when confusion may arise, we use  "bounded existence" and "imaginary existence" respectively to refer to the other cases.

Note that Hrushovski in \cite{HrushovskiPFFieldsRelStructures} and Chatzidakis and Hrushovski in \cite{ChatzidakisHrushovski_1999} first introduce generalized amalgamation for the real algebraic closure before showing that the theories they are working with eliminate imaginaries. 
Our motivation to use real amalgamation here instead of imaginary amalgamation is that we want to work with theories for which no description of the imaginaries is known, such as the theory of separably closed fields of infinite imperfection degree (see Subsection \ref{sec:SCFPAC}).

Note also that if $\ind$ is a relation in a theory $T$ satisfying Invariance, Symmetry, Finite character, Local character, Transitivity, Monotonicity, Base monotonicity, Extension and Closure, then $3$-existence over elementary substructures is enough to prove that $T$ is simple and that $\ind$ is the nonforking independence by Kim-Pillay's theorem and Remark \ref{rk:3AmalgEquivIndepThm}.

\medskip

By definition, a theory $T$ has imaginary existence if and only if $T\eq$ has existence.
In particular, existence coincide with imaginary existence for theories with elimination of imaginaries.
In \cite{DePiro_Kim_Millar_2005}, the authors showed that all stable theories with elimination of imaginaries have existence over elementary substructures.
We will see in Subsection \ref{sec:RelUniq} that this result is preserved when weakening the assumption of elimination of imaginaries by simply assuming that types over algebraically closed sets are stationary.

\begin{fact} \cite[Proposition 1.6]{DePiro_Kim_Millar_2005} \label{Sharp}
Assume that $T$ is stable with elimination of imaginaries. 
Assume that the set of parameters $C$ satisfies the property $(\sharp_n)$:  the independence  $a\ind_C b_1...b_n$ implies 
	\[
	\dcl(\acl(ab_1C)...\acl(ab_nC))\cap \acl(b_1...b_nC) = \dcl(\acl(b_1C)...\acl(b_nC)).
	\] 
	In that case, the theory $T$ has $n$-existence over $C$.
	
\noindent
Moreover, the property $(\sharp_n)$ holds over all elementary substructures, hence the theory $T$ has $n$-existence over elementary substructures for all integers $n$.
\end{fact}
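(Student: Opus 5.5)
We argue by induction on $n$, following De Piro--Kim--Millar. By the preceding remark it suffices to complete an algebraically closed $\frP([n])^-$-system $S=(p_w)$ over $C$, assuming $T$ has $k$-existence over $C$ for all $k<n$. The cases $n\le 3$ are extension and the independence theorem; in the stable case these follow from stationarity together with the fact that $2$-uniqueness holds over algebraically closed sets under elimination of imaginaries.

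For the inductive step, choose a realization $(b_w)_{w\subsetneq [n-1]}$ of the system restricted to $\frP([n-1])^-$. By the induction hypothesis this can be completed to $b_{[n-1]}=\acl(b_1\dots b_{n-1}C)$. Next realize $b_n\models p_n$ with $b_n\ind_C b_{[n-1]}$. The remaining task is to find, for every $w\subseteq[n-1]$, a copy of $b_{w\cup\{n\}}$ compatible with all the pieces already fixed. Write $a=b_n$ and $b_i$ for $i<n$. Using stationarity over the algebraically closed set $\acl(b_wC)$, we build the sets $\acl(a b_w C)$ successively, each with the prescribed type $p_{w\cup\{n\}}$. These types agree on overlaps, and uniqueness of nonforking extensions allows us to glue them whenever $w\subseteq[n-1]$ is proper. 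The only incompatibility left is that the elementary map
\[
\sigma:\dcl\bigl(\acl(ab_1C),\dots,\acl(ab_{n-1}C)\bigr)\to \dcl\bigl(\acl(ab_1C),\dots,\acl(ab_{n-1}C)\bigr)
\]
determined by the given types must also be consistent with the already fixed set $\acl(b_1\dots b_{n-1}C)$.

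This consistency is exactly what $(\sharp_{n-1})$ provides. The condition $a\ind_C b_1\dots b_{n-1}$ gives
\[
\dcl\bigl(\acl(ab_iC)\bigr)_i\cap\acl(b_1\dots b_{n-1}C)=\dcl\bigl(\acl(b_iC)\bigr)_i.
\]
On the right-hand side the two prescriptions agree, since both restrict to the system $\frP([n-1])^-$. Hence the union of the partial elementary map on $\acl(b_{[n-1]}C)$ and the map coming from the new face types is consistent. Concretely, one checks that the partial type
\[
\tp(b_{[n-1]}/C)\cup\bigcup_{w}p_{w\cup\{n\}}
\]
is consistent by an automorphism argument: apply stationarity of $\tp(a\,\dcl(\dots)/\dcl(\acl(b_iC))_i)$, which is stationary by elimination of imaginaries. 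Closing under $\acl$ then yields the completion $p_{[n]}$, and independence of $b_1,\dots,b_n$ holds by construction.

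For the second assertion, let $C=M_0\preceq M$ and suppose $a\ind_{M_0}b_1\dots b_n$. We must show that any $e\in\dcl(\acl(ab_iM_0))_i\cap\acl(\bar b M_0)$ lies in $\dcl(\acl(b_iM_0))_i$. I would first pass to $\tp(a/\acl(\bar bM_0))$, which is the heir/coheir of $\tp(a/M_0)$. Writing $e=f(c_1,\dots,c_n)$ with $c_i\in\acl(ab_iM_0)$, we use finite satisfiability in $M_0$ to replace $a$ by some $a'\in M_0$ that preserves the formula "$f(c_1(a,b_1),\dots,c_n(a,b_n))=e$" together with the algebraicity data. Then $e$ becomes $f(c_1(a',b_1),\dots)$ with $c_i(a',b_i)\in\acl(b_iM_0)$.

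The main obstacle is this last step. The algebraic witnesses $c_i$ are not definable functions of $(a,b_i)$, so the coheir transfer must handle a choice among finitely many conjugates uniformly. I would first try to code each $c_i$ by finite sets, which is possible by elimination of imaginaries, making it $\dcl$ over $ab_iM_0$ modulo finite ambiguity. I would then use that $M_0$ is a model to pick consistent conjugates, and conclude via the definability of types to keep $e$ fixed.
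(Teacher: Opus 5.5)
There is a genuine gap in the inductive step of the first assertion. You claim that the faces $b_{w\cup\{n\}}$, $w\subsetneq[n-1]$, can be built successively and glued using only stationarity ("uniqueness of nonforking extensions"), so that compatibility with $\acl(b_1\dots b_{n-1}C)$ is the only remaining issue. This is not justified even for $n=4$. Fix $b_{123}$ and $b_4\ind_C b_{123}$. Stationarity lets you choose $b_{124}$ over $b_{12}b_4$, and then $b_{134}$ over $b_{13}b_{14}$ (stationarity over $b_1$). But $b_{234}$ must then extend the already fixed $b_{23},b_{24},b_{34}$. Whether $\tp(b_{23}b_{24}b_{34}/C)$ equals the restriction of $p_{234}$ is a $3$-uniqueness statement, and $3$-uniqueness does not follow from stationarity. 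In general every gluing step is an instance of $m$-uniqueness over $C$ for some $m\le n-1$. This is why an induction carrying only $k$-existence cannot go through. Existence does not propagate; uniqueness does, via Lemma \ref{lemma:UniquenessImpliesExistence}.

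Your use of $(\sharp_{n-1})$ also has the wrong instance. With the vertices $b_1,\dots,b_{n-1}$, it only relates $\acl(\bar bC)$ to the $2$-faces $\acl(ab_iC)$. The faces $b_{w\cup\{n\}}$ with $|w|\ge 2$ contain $b_w\subseteq\acl(\bar bC)$ and impose constraints this instance does not see. Finally, "stationarity of $\tp(a\,\dcl(\dots)/\dcl(\acl(b_iC))_i)$ by elimination of imaginaries" is unavailable, because $\dcl(\acl(b_iC))_i$ is not algebraically closed.

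The missing idea has two parts.

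First, turn $(\sharp)$ into a type entailment. Let $c\in\acl(\bar bC)$ and $D=\dcl(\acl(ab_iC))_i$. The finite set of realizations of $\tp(c/D)$ has a code which, by elimination of imaginaries, is a real tuple in $D\cap\acl(\bar bC)=\dcl(\acl(b_iC))_i$. Hence $\tp(c/\dcl(\acl(b_iC))_i)\vdash\tp(c/D)$.

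Second, apply this entailment with $a$ a vertex $a_k$ and with the $b_i$ taken to be the codimension-one faces $a_{[m]\setminus\{i,k\}}$, $i<k$, rather than the vertices. The hypothesis $(\sharp)$ does not require the $b_i$ to be independent, and $a_k$ is independent from them over $C$. This is how Lemma \ref{lemma:SharpEntailment} is used in Proposition \ref{prop:RelNUniq}, which adds the faces $a_{[m]\setminus\{k\}}$ one at a time. It yields $m$-uniqueness for $m\le n-1$, hence $n$-existence by Lemma \ref{lemma:UniquenessImpliesExistence}; this is the route of Section~\ref{sec:RelUniq}.

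Your second assertion is essentially correct, and the obstacle you single out does not arise. Put the algebraic witnesses under an existential quantifier. The formula $\exists y_1\dots y_n\,(\bigwedge_i\theta_i(y_i,z,b_i)\wedge f(y_1,\dots,y_n)=e)$ lies in $\tp(a/\acl(\bar bM_0))$, where each $\theta_i$ includes a bound on its number of solutions in $y_i$. So it is realized by some $a'\in M_0$, and the resulting witnesses lie in $\acl(b_iM_0)$. This is exactly the formula $\rho$ in the proof of Lemma \ref{lemma:SharpEntailment}; no coding of finite sets or definability of types is needed.
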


As of yet, we do not have any example of a stable theory (necessarily without elimination of imaginaries) which does not have $n$-existence over models.
In particular, we do not know if imaginary existence (over, say, models) implies existence, and vice-versa.

\subsection{First properties}
\label{sec:LemmaAmalgamation}

In this subsection we work in a complete simple theory $T$  and we gather some results on amalgamation systems.
These results are neither difficult nor surprising and were probably already known by the community; we include them with proofs here for the sake of completeness.

\begin{lemma} \label{ExtensionAmalgSystem}
Let $(p_w(x_w), w\in W)$ be an amalgamation system over a set of parameters $A$. If $B$ is a superset of $A$ such every $p_w(x_w)$ has a unique nonforking extension $q_w(x_w)$ to $B$, then $(q_w(x_w), w\in W)$ is an amalgamation system over $B$. 
\end{lemma}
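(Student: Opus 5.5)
We check the axioms of Definition \ref{AmalgSyst} for the family $(q_w(x_w), w\in W)$ one by one. The variable structure is unchanged, so the condition $x_v\cap x_w=x_{v\cap w}$ is automatic. Axiom (\ref{axiom:AmalgSyst1}) also transfers directly: each $q_w$ extends $p_w$, and $p_w$ already contains the formulas $y\neq z$.

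The first real step is coherence: for $v\subseteq w$ we need $q_w\rest x_v = q_v$. Let $b_w$ realize $q_w$. Then $b_w\ind_A B$, so by \ref{ppt:monot} the restriction $b_v=b_w\rest x_v$ satisfies $b_v\ind_A B$. Moreover $\tp(b_v/A)=p_w\rest x_v=p_v$. Hence $\tp(b_v/B)$ is a nonforking extension of $p_v$ to $B$. By the uniqueness hypothesis this extension must be $q_v$, which proves coherence.

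The next step is controlled character, which is immediate. A realization $b_w$ of $q_w$ realizes $p_w$, so $b_w$ is algebraic over $A\cup\{b_i, i\in w\}$, and therefore over $B\cup\{b_i,i\in w\}$.

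The remaining axiom is independence over $B$, and I expect it to be the only mildly delicate step. Let $b_w\models q_w$ and $b_i=b_w\rest x_{\{i\}}$. We know two things: $(b_i)_{i\in w}$ is independent over $A$, because $b_w\models p_w$, and $b_w\ind_A B$. Fix $i\in w$ and write $b_{\neq i}$ for the tuple $(b_j)_{j\in w, j\neq i}$. Independence over $A$ gives $b_i\ind_A b_{\neq i}$. From $b_w\ind_A B$ we get $b_i b_{\neq i}\ind_A B$. Using \ref{ppt:sym}, \ref{ppt:basemonot} and \ref{ppt:monot}, this yields $b_i\ind_{A b_{\neq i}} B$. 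Now \ref{ppt:transit} gives $b_i\ind_A b_{\neq i}B$. Applying \ref{ppt:basemonot} once more gives $b_i\ind_{B} b_{\neq i}$; here $A\subseteq B$, so the base $AB$ is just $B$. Since this holds for every $i\in w$, the family $(b_i)_{i\in w}$ is independent over $B$.

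Strictly speaking, the transitivity step is stated on the right-hand side, so symmetry has to be used to place the relevant tuple on the correct side. This bookkeeping is routine.
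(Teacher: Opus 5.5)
Your proposal is correct and follows the same route as the paper: coherence from the uniqueness of the nonforking extension applied to $q_w\rest x_v$, controlled character by enlarging the base from $A$ to $B$, and independence over $B$ by combining the independence of $(b_i)_{i\in w}$ over $A$ with $b_w\ind_A B$. The only difference is that you spell out the forking calculus for that last step, which the paper leaves implicit; your transitivity step already has the form in which transitivity is stated, so the extra symmetry you mention is not needed.
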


\begin{proof}
To see that $(q_w(x_w), w\in W)$ is an amalgamation system, we first have to see that $q_w(x_w)\rest x_v = q_v(x_v)$ for $v\sseq w$.
But this follows from the uniqueness of $q_v$, as $q_w(x_w)\rest x_v$ is also a nonforking extension of $p_v$.

Fix now a realization $b_w$ of $q_w$. 
We know that $b_w$ is contained in the algebraic closure of the $b_i$'s together with $A$, and therefore in $\acl(Bb_i, i\in w)$.
Moreover, the family $(b_i, i\in w)$ is independent over $A$.
Since the independence $(b_i, i\in w) \ind_A B$ holds by hypothesis on $q_w$, this implies that $(b_i, i\in w)$ is an independent family over $B$.
\end{proof}

\begin{remark}
The hypothesis of this lemma is clearly fullfilled either when all $p_w$'s are stationary, or when the set $B$ is contained in the definable closure of $A$.
\end{remark}

An interesting consequence of this lemma is that if $C$ is a real algebraically closed set such that $\acl\eq(C)=\dcl\eq(C)$, then imaginary $3$-existence over $\acl\eq(C)$ implies $3$-existence over $C$.
To see this, remark that the unique extension of a $3$-amalgamation system over $C$ in $T$ to $\acl\eq(C)$ is a $3$-amalgamation system in $T\eq$, albeit not necessarily an algebraically closed one.
This does not matter here, since the $3$-existence property yields a completion to all $3$-existence systems as discussed in remark \ref{rk:3AmalgEquivIndepThm}
In particular we can find a completion of the extended system, which in turns yields a completion of the initial system.

\begin{lemma} \label{lem:RestrictionAmalgSystem}
Let $(p_w(x_w), w\in W)$ be an amalgamation system over a set of parameters $A$. If $C$ is a subset of $A$ such that for all $w$, 
the type $p_w$ does not fork over $C$, then $(p_w(x_w)\rest C, w\in W)$ is an amalgamation system over $C$. 
\end{lemma}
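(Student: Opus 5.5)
I will check the three conditions of Definition \ref{AmalgSyst} directly for the restricted family $(p_w\rest C, w\in W)$, mirroring the proof of Lemma \ref{ExtensionAmalgSystem}. The variables are unchanged, so condition (\ref{axiom:AmalgSyst1}) holds: any realization of $p_w\rest C$ has the same type over $C$ as some realization of $p_w$. Its distinct coordinates are therefore distinct, since inequalities are formulas without parameters from $A$. The first half of (\ref{axiom:AmalgSyst2}), $x_v\cap x_w = x_{v\cap w}$, is purely about variables. For compatibility, if $v\sseq w$ then $(p_w\rest C)\rest x_v = (p_w\rest x_v)\rest C = p_v\rest C$, because restricting to a subtuple of variables commutes with restricting the parameter set.

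The substance is condition (3). Fix $w\in W$ and a realization $b_w$ of $p_w\rest C$, with $b_i = b_w\rest x_{\{i\}}$. By saturation of $M$ and invariance, there is an automorphism fixing $C$ sending $b_w$ to a realization $b'_w$ of $p_w$ over $A$. Since independence over $C$ and algebraicity over $C\cup\{b_i\}$ are both invariant under $\aut(M/C)$, it suffices to prove both properties for $b'_w$. So we may assume $b_w$ realizes $p_w$ itself, with the extra information that $b_w\ind_C A$, because $p_w$ does not fork over $C$.

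For \emph{independence}, the family $(b_i)_{i\in w}$ is independent over $A$ by hypothesis. Enumerate $w=\{i_1,\dots,i_k\}$. Independence over $A$ gives $b_{i_j}\ind_A b_{i_1}\dots b_{i_{j-1}}$. Monotonicity applied to $b_w\ind_C A$ gives $b_{i_1}\dots b_{i_j}\ind_C A$, and hence $b_{i_j}\ind_C A$. Transitivity then yields $b_{i_j}\ind_C A b_{i_1}\dots b_{i_{j-1}}$, and monotonicity gives $b_{i_j}\ind_C b_{i_1}\dots b_{i_{j-1}}$ for each $j$, which is independence of the family over $C$. (Symmetry is used to put the sets on the correct sides when applying transitivity.)

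For the \emph{controlled character}, we know $b_w\in\acl(A\cup\{b_i, i\in w\})$ and want $b_w\in\acl(C\cup\{b_i,i\in w\})$. Write $b=(b_i)_{i\in w}$. From $b_w\ind_C A$, base monotonicity gives $b_w\ind_{Cb} A$. An element $e$ of $b_w$ is algebraic over $Ab$ and independent from $A$ over $Cb$. By the standard fact that $e\ind_{D}A$ and $e\in\acl(AD)$ imply $e\in\acl(D)$ (a non-algebraic type over $D$ has a nonforking extension to $AD$, which is still non-algebraic), $e\in\acl(Cb)$. I expect this to be the only mildly delicate step, and only in quoting the fact in the right form: a type that does not fork over $D$ cannot become algebraic over $AD$ unless it was already algebraic over $D$. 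Apart from that, the argument is routine bookkeeping.
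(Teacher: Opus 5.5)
Your proof is correct and follows the same route as the paper's: compatibility by commuting the two restrictions, independence over $C$ by combining independence over $A$ with $b_w\ind_C A$ via transitivity, and controlled character by passing to $b_w\ind_{Cb}A$ and then using algebraicity. One step needs tightening: your parenthetical only gives \emph{some} non-algebraic nonforking extension, which does not show that $\tp(e/AD)$ itself is non-algebraic. The paper argues this directly: $b_w\subseteq\acl(Ab)$ gives $b_w\ind_{Ab}b_w$, so transitivity with $b_w\ind_{Cb}A$ yields $b_w\ind_{Cb}b_w$, i.e.\ $b_w\subseteq\acl(Cb)$.
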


\begin{proof}
The compatibility of the $p_w(x_w)\rest C$'s is immediate, since the restriction $(p_w\rest C) \rest x_v$ is equal to $(p_w\rest x_v) \rest C = p_v \rest C$.

Fix a realization $b_w$ of $p_w(x_w)$ and notice that $b_w$ realizes $p_w\rest C$.
Since $(b_i, i\in w)$ is an independent family over $A$ (where $b_i:=b_w \rest x_i$) and the family $(b_i, i\in w)$ is independent from $A$ over $C$, we get that $(b_i, i\in w)$ is an independent family over $C$, which is witnessed by its type over $C$, and hence by $p_w\rest C$.

Remark that the independence $b_w \ind_{C} A$ yields $b_w \ind_{C, (b_i, i\in w)} A$.
Since $b_w$ is algebraic over $A\cup \{b_i, i\in w\}$, we get that $b_w$ is independent from itself over $C \cup \{b_i, i\in w\}$ and therefore algebraic over $C\cup \{b_i, i\in w\}$. 
This shows that $(p_w(x_w)\rest C, w\in W)$ is also controlled.
\end{proof}

\begin{lemma} \label{CompletionLemma}
Let $(q_w(x_w),w\in W)$ be an amalgamation system over an algebraically closed set of parameters $C$ such that for all $w$ in $W,$ 
\begin{itemize}
	\item if $a_w$ realizes $q_w$, then $\acl(Ca_w)=\dcl(Ca_w)$ and
	\item if $v$ is contained in $w$, then $\acl(Ca_v)\cap a_w = a_v$.
\end{itemize}
Then there exists an algebraically closed amalgamation system $(s_w(y_w), w\in W)$ over $C$ such that for all $w$ in $W$, we have the following:
\begin{itemize}
	\item $x_w \sseq y_w$ and $s_w(y_w) \rest x_w = q_w(x_w)$ and
	\item if $b_w$ realizes $s_w$, then $b_w= \acl(C\cup b_w \rest x_w)$.
\end{itemize}
In particular, every completion of $(s_w(y_w), w\in W)$ yields a completion of the initial system $(q_w(x_w),w\in W)$.
\end{lemma}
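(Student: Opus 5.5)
The idea is to enlarge each tuple of variables $x_w$ to variables $y_w$ for the algebraic closure $\acl(Ca_w)$ of a realization $a_w$ of $q_w$. The key is to choose these enumerations compatibly along the subset lattice $W$, by induction on the size of $w$.

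\textbf{Step 1: choice of variables.} I proceed by induction on $|w|$ (equivalently, along a linear extension of inclusion on $W$). For each $w$, fix a realization $a_w$ of $q_w$ and set $b_w$ to be an enumeration of $\acl(Ca_w)$ extending $a_w$, with a new variable for each new element. Both the elements of $C$ and the elements of $\acl(Ca_v)$ for $v\subsetneq w$ must receive the variables already chosen at stage $v$.

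To make this consistent I use the hypotheses:
\begin{itemize}
\item For $v\sseq w$, $a_v$ sits inside $a_w$ via $q_w\rest x_v=q_v$. By invariance, $\acl(Ca_v)$ embeds into $\acl(Ca_w)$.
\item This embedding is canonical, because $\acl(Ca_v)=\dcl(Ca_v)$. Its elements are $\emptyset$-definable functions of $Ca_v$, so any automorphism fixing $Ca_v$ fixes $\acl(Ca_v)$ pointwise. Hence the identification of $\acl(Ca_v)$ inside $\acl(Ca_w)$ does not depend on the chosen realization: the type $\tp(\acl(Ca_v)/C)$ seen inside $w$ is determined by $q_v$.
\end{itemize}

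\textbf{Step 2: coherence.} I need $y_v\cap y_w=y_{v\cap w}$. The new variables of $y_v$ name elements of $\acl(Ca_v)\setminus(C\cup a_v)$. I must check that, inside a realization of the larger type, the copies of $\acl(Ca_v)$ and $\acl(Ca_{v'})$ intersect exactly in $\acl(Ca_{v\cap v'})$.

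Simple independence gives this. The families $(a_i)_{i\in v}$ and $(a_i)_{i\in v'}$ are independent over $Ca_{v\cap v'}$ (with the $a_i$ singletons) inside the realization of $q_w$. Since $C$ is algebraically closed, $\acl(Ca_v)\cap\acl(Ca_{v'})=\acl(Ca_{v\cap v'})$; this uses $\acl(Ca_v)=\acl(C,a_i:i\in v)$, which holds by controlled character.

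The second hypothesis, $\acl(Ca_v)\cap a_w=a_v$, ensures the old variables are not duplicated: an element of $a_w$ lying in $\acl(Ca_v)$ is already one of the variables of $x_v$. Condition \eqref{axiom:AmalgSyst1}, distinct variables naming distinct elements, thus holds.

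Define $s_w=\tp(b_w/C)$. Restriction compatibility $s_w\rest y_v=s_v$ follows from the canonicity in Step 1. Independence and controlled character are inherited from $q_w$, and algebraic closedness holds by construction, as does $b_w=\acl(C\cup b_w\rest x_w)$.

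\textbf{Step 3: completions.} Given a completion $s_\top$, its restriction to $x_\top:=\bigcup x_w$ restricts on each $x_w$ to $s_w\rest x_w=q_w$. The independence and control conditions descend, so this gives a completion of the original system.

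The main obstacle is Step 2: choosing the variable names so that the $\sseq$-restrictions agree and intersections of variable sets match intersections of the lattice. The key tool there is the $\dcl=\acl$ hypothesis, which makes the embeddings canonical, combined with the independence computation of $\acl(Ca_v)\cap\acl(Ca_{v'})$.
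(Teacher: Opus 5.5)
Your proposal is correct and follows essentially the paper's proof. Both argue by induction on $|w|$, enumerating $\acl(Ca_w)=\dcl(Ca_w)$ while reusing the names already fixed in the $y_v$ for $v\subsetneq w$; this reuse is canonical because $\acl=\dcl$. Both get consistency from the independence of $a_u$ and $a_v$ over $Ca_{u\cap v}$, and use the hypothesis $\acl(Ca_v)\cap a_w=a_v$ to keep $x_w$ fixed.

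The differences are only in presentation. The paper phrases the renaming through $C$-definable maps $f$ and formulae ``$z'=f(x_v)$'' rather than automorphisms. It also checks explicitly that $y_u\cap y_w=y_{u\cap w}$ for incomparable $u$. In your version this follows from the freshness of the new variables, but you should say so.
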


\begin{proof}
We define the $s_w$'s by induction on the size of $w$.
Let $w$ be an element of $W$, and assume that we have already defined the $s_v(y_v)$ with the desired properties for all $v$'s with $|v|<|w|$, and eventually for some $v$'s with $|v|=|w|$.
In particular, the $s_v(y_v)$'s, $v\subsetneq w$ have already been defined.

Fix a realization $a_w$ of $q_w$ and let $b_w$ be an arbitrary enumeration of $\dcl(Ca_w)$ without repetitions.
Define $s_w(y_w^0)$ to be the type $\tp(b_w/C)$.
We may assume that the tuple of variables $y_w^0$ used to denote $b_w$ contains $x_w$ in such a way that that $b_w \rest x_w = a_w$ and that all the other variables are completely new.
We call $\rho$ the bijection mapping a variable in $y_w^0$ to the corresponding element in $b_w$.

\medskip
\noindent
We want to define a bijective renaming $\sigma: y_w^0\to y_w$ of $y_w^0$ fixing $x_w$ with the following properties:
\begin{enumerate}
	\item For all previously constructed $y_u$ we have $y_v\cap y_w=y_{v\cap w}$.
	\item If $v$ is strictly contained in $w$, then $s_w(y_w)\rest y_v=s_v(y_v)$.
	\item For all $w'$ in $W$ we have $y_w \cap x_w'= x_{w\cap w'}$
\end{enumerate}

\noindent
Let $z$ be a variable in $y_w^0$ and set $b=\rho(z)$.
If $b$ does not belong to any of the sets $\dcl(C a_v), v<w$, we leave $z$ untouched, i.e. we set $\sigma(z)=z$.
Otherwise there is a $C$-definable map $f$ such that $b$ is equal to $f(a_v)$ for some $v<w$.
Since $s_v(y_v)$ is an enumeration of the definable closure of a realization of $q_v(x_v)$, there exists a variable $z'$ in $y_v$ such that the formula $"z'=f(x_v)"$ belongs to $s_v(y_v)$.
We then set $\sigma(z)=z'$, and we define $y_w$ as the image of $y_w^0$ under $\sigma$.

Notice that if $z$ belongs to $x_w$, then $\sigma(z)=z$, i.e. $x_w$ is fixed under $\sigma$.
To see this, we may assume that $\rho(z)$ belongs to $\acl(Ca_v)$ for some $v\subsetneq w$ (otherwise the result is immediate).
We know that $\rho(x_w)=a_w$ and that $a_w\cap \acl(Ca_v)=a_v$ by hypothesis, hence $z$ belong to $x_v$.
Now applying the process to any variable of $x_v$ replaces it by itself, therefore we get $\sigma(z)=z$, i.e. $\sigma$ fixes $x_w$.
More generally, for any $w'$ in $W$, we have $y_w\cap x_{w'} = x_{x\cap w'}$ so (3) holds.

\bigskip
\noindent
CLAIM: The map $\sigma: y_w^0 \to y_w$ is well-defined.
\renewcommand\qedsymbol{$\square_{\text{Claim}}$}

\begin{proof}
Remark first that $z'$ does not depend on the choice of $f$: if
$b=f(a_v)=g(a_v)$, then the formula $"f(x_v)=g(x_v)"$ belongs to $\tp(a_v/C)=q_v(x_v)$.
It follows that if $s_v$ contains both $"z'=f(x_v)"$ and $"z''=g(x_v)"$, then $s_v$ also contains $"z'=z''"$.
Now $s_v$ is the type of an enumeration without repetitions, whence $z'$ and $z''$ are actually the same variable.

Moreover, this operation does not depend on the choice of $v$.
To see this, remark that if $b$ is definable over both $Ca_u$ and $Ca_v$, then, by independence of $a_u$ and $a_v$ over $Ca_{u\cap v}$, we also get that $b$ belongs to $\acl(Ca_{u\cap v})$, which is equal to $\dcl(Ca_{u\cap v})$ by hypothesis.
This yields a $C$-definable map $f$ such that $b$ equals $f(a_{u\cap v})$.
In particular, we have variables $z_1, z_2$ and $z_3$ such that the formulae 
$"z_1=f(x_{u\cap v})", "z_2=f(x_{u\cap v})"$ and $"z_3=f(x_{u\cap v})"$ belong to $s_u, s_v$ and $s_{u\cap v}$ respectively.

By induction hypothesis,  $s_{u\cap v}$ is contained into both $s_u$ and $s_v$, and $"z_3=f(x_{u\cap v})"$ belongs therefore to both $s_u$ and $s_v$, so we have $z_1=z_3=z_2$ by the discussion above.

Note that this reasoning also proves the injectivity (and hence bijectivity) of $\sigma$. 
\end{proof}
\renewcommand\qedsymbol{$\square$}

Set $s_w(y_w)$ to be the image of $s_w(y_w^0)$ under $\sigma$.
By construction $b_w$ realizes $s_w(y_w)$ and $b_w\rest y_v$ realizes $s_v(y_v)$ for any $v$ strictly contained in $w$, hence $s_w(y_w)\rest y_v=s_v(y_v)$, i.e. (2) holds.
Let us now see that if $y_u$ had previously been constructed, then $y_u\cap y_w=y_{u\cap  w}$.
By induction hypothesis we only have to show the inclusion $y_u\cap y_w\sseq y_{u\cap  w}$.
Let $z$ be a variable in $y_u\cap y_w$.
We already know that $y_u \cap x_w= x_{u\cap w}$, so there are two remaining possibilities:
\begin{itemize}
	\item If $z$ belongs to some $y_v, v \subsetneq w$, then $z$ belongs to $y_u\cap y_v$, which is equal to $y_{u\cap v}$ by induction hypothesis.
	Now $y_{u\cap v}$ is contained in $y_{u\cap w}$ again by induction hypothesis (since $|u\cap w| < |w|$), so $z$ belongs to $y_{u\cap w}$.
	
	\item Otherwise $z$ is a new variable and is fixed by $\sigma$. In particular, the variable $z$ does not belong to $y_u$ so this case does not happen.
\end{itemize}

It follows that the system obtained after adding $s_w(y_w)$ is again an amalgamation system satisfying the desired properties, which concludes the proof.
\end{proof}

An immediate consequence of this result for theories $T$ with weak elimination of imaginaries is that $T\eq$ has $n$-existence over $\acleq(C)$ if and only if $T$ has $n$-existence over $C$, for $C$ a real algebraically closed set.

\medskip

Finally, Hrushovski proved in \cite{hrushovski_2012} that if a (stable) theory $T$ has $n$-uniqueness over all algebraically closed sets, then it also has $(n+1)$-existence over algebraically closed sets.
It turns out that the hypothesis can be weakened, as the following lemma proves that $n$-uniqueness over an algebraically closed set $C$ implies $(n+1)$-existence over $C$.

\begin{lemma} \label{lemma:UniquenessImpliesExistence}
If $T$ has $n$-uniqueness and $2$-existence over an algebraically closed set $C$, then $T$ has $(n+1)$-existence over $C$.
\end{lemma}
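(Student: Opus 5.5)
By the remark following Definition \ref{nExist}, $(n+1)$-existence over $C$ amounts to $\mathfrak{P}([k])^-$-existence for every $k\le n+1$. The plan is to argue by induction on $k$ and the size of the system. The case $k=n+1$ is the only genuinely new one; the cases $k\le n$ should reduce to it, because $n$-uniqueness also gives $k$-uniqueness for $k\le n$.

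\textbf{Building the completion.} Fix an algebraically closed $\mathfrak{P}([n+1])^-$-system $(p_w, w\in\mathfrak{P}([n+1])^-)$ over $C$. Singleton variables are already the algebraic closures of their coordinates, so we can start directly from the faces.
\begin{enumerate}
\item Realize $p_{[n]}$ by $b_{[n]}$, with coordinates $b_1,\dots,b_n$ and algebraically closed subfaces $b_v$ for $v\subseteq[n]$.
\item Use $2$-existence (extension) to choose $b_{n+1}$ realizing $p_{\{n+1\}}$ with $b_{n+1}\ind_C b_{[n]}$. The family $b_1,\dots,b_{n+1}$ is then independent over $C$.
\item Set $b_{v\cup\{n+1\}}$, for $v\subseteq[n]$, to be an enumeration of $\acl(C b_v b_{n+1})$.
\end{enumerate}

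\textbf{Matching the faces containing $n+1$.} The remaining task is to show that for each maximal $w=v\cup\{n+1\}$ with $|v|=n-1$, the tuple $b_w$ can be enumerated so that it realizes $p_w$ compatibly with the enumerations already fixed on its proper subfaces. Do this by induction on $|v|$, using uniqueness. Suppose all $b_u$ for $u\subsetneq w$ are already enumerated compatibly. Take a realization $a_w$ of $p_w$. Its restriction to $\mathfrak{P}(w)^-$ and the tuple $(b_u)_{u\subsetneq w}$ are two realizations of the same algebraically closed $\mathfrak{P}(w)^-$-system: both consist of algebraically closed tuples over independent bases, and each face has the prescribed type. The two completions $\tp(a_w/C)$ and $\tp(b_u,u\subsetneq w\,/C)$, restricted to the union of the lower variables, are minimal completions of this $|w|$-system with $|w|\le n$. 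By $|w|$-uniqueness they agree, so there is an automorphism over $C$ sending $(a_w\rest x_u)_u$ to $(b_u)_u$. Its image of $a_w$ is an enumeration of $\acl(C b_w)$ that realizes $p_w$ and extends the given enumerations. Faces $w$ containing $n+1$ that are not contained in $[n]$ are handled in order of increasing size, so the hypotheses are always met.

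\textbf{The main obstacle.} The delicate step is making the $p_w$ for different $w\ni n+1$ cohere on their intersections. Suppose two faces $w,w'$ both contain $n+1$. Their common face $w\cap w'$ is handled first, and each later face then uses the enumeration of $b_{w\cap w'}$ already fixed, rather than choosing a new one. So the argument must be an induction over all subsets containing $n+1$ ordered by size, not only over maximal faces. A second point needs care: the uniqueness hypothesis must be applied to systems whose completions are minimal, which is why the realization $a_w$ is restricted to the union of its lower-face variables before the automorphism is applied. Once all faces are enumerated, the type of $\acl(C b_1\cdots b_{n+1})$ extending everything is a completion: independence holds by step 2, and the controlled character is immediate.
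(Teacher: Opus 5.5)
Your proposal is correct and is essentially the paper's argument. Both extend enumerations face by face in order of increasing size, and both use $|w|$-uniqueness (with $|w|\le n$) to see that the already-built lower faces realize $p_w\rest\bigcup_{u\subsetneq w}x_u$, then transport a realization of $p_w$ by an automorphism over $C$. The only difference is that you realize $p_{[n]}$ at once and add an independent $b_{n+1}$, so only faces containing $n+1$ need this step, whereas the paper builds every face starting from an independent family of singletons.

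One point you should state explicitly, as the paper does: independence of $b_1,\dots,b_{n+1}$ over $C$ gives $\acl(Cb_i,i\in u)\cap\acl(Cb_i,i\in u')=\acl(Cb_i,i\in u\cap u')$. This is what guarantees that distinct variables receive distinct values in the union of your enumerations. Without it, the lower faces you feed to uniqueness, and your final tuple, are not known to realize genuine completions; agreement on the common face $w\cap w'$ alone does not give this.
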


\begin{proof}
Let $S=(p_w, w\in W)$ be an algebraically closed $(n+1)$-amalgamation system.
We may assume that $W= \mathfrak{P}([n])^-$.
For all $k\leq n$, let $W_k=\{w\in W|\: |w|\le k\}$ and $S_k= (p_w, w\in W_k)$. We show by induction on $k$ that $S_k$ has a completion, which will yield the result by taking $k=n$.

For $k=1$, use $2$-existence to find an independent family $a_1,...a_{n+1}$ such that $a_i$ is a realization of $p_i$, for all $i\leq n+1$.
Then $\tp(a_1,...a_{n+1}/C)$ is a completion of $S_1$.

Assume now that $k < n$ is such that $S_k$ has a completion and let $a_\top = \bigcup_{w\in W_k} a_k$ be a realization of this completion.
Let $w$ be an element of $W_{k+1} \setminus W_k$.
Then $(p_v, v\subsetneq w)$ is a $|w|$-amalgamation system over $C$, and both $p_w(x_w) \rest \bigcup_{v\subsetneq w} x_v$ and $\tp(a_v, v \subsetneq w/C)$ are completions of this system.
By $n$-uniqueness and since $|w|\le n$, we can deduce that $\bigcup_{v\subsetneq w} a_v$ realizes $p_w(x_w) \rest \bigcup_{v\subsetneq w} x_v$.
In particular, there is an enumeration $a_w$ on $\acl(a_i, i\in w)$ expanding the one on  $\bigcup_{v\subsetneq w} a_v$ such that $a_w$ realizes $p_w$.
Moroever if $w\neq w'$, then we have $\acl(a_i, i\in w) \cap \acl(a_i, i\in w') = \acl(a_i, i\in w\cap w')$, so the enumerations $a_w$ and $a_{w'}$ are compatible on their intersection $a_{w \cap w'}$.
It follows that $\tp(a_w, w\in W_k /C)$ is a completion of $S_k$.
\end{proof}

\smallskip

\subsection{Relative uniqueness} \label{sec:RelUniq}
We provide here a generalization of the proof of \cite[Proposition 1.6]{DePiro_Kim_Millar_2005} (see Fact \ref{Sharp}) to the case of a theory in which types over algebraically closed sets are stationary (but which does not necessarily eliminates imaginaries).

We highlight the role of $n$-uniqueness, as it will be used in section \ref{sec:FieldsOperators} to prove that some unstable expansions of fields have $n$-existence.
Moreover, we formulate the results relatively to a subset of a model of the theory in order to obtain a lemma used in section \ref{sec:PAC} to prove $n$-existence in bounded PAC substructures of stable theories.

\medskip

In this subsection, we fix a $|T|^+$-saturated model $M$ of a stable theory $T$ such that all the types over algebraically closed sets are stationary (i.e. the theory $T$ has $2$-uniqueness over lagebraically closed sets).
Given a subset $P$ of $M$ and a subset $A$ of $P$, we say that $A$ is \textit{relatively algebraically closed (in $P$)} if 
$$A=\acl(A) \cap P.$$
We assume that $P$ is a subset of $M$ with the following properties:

\begin{property}
\begin{enumerate}
	
	\item  \label{smallhyp:RelExtension}
	(\textit{relative extension}) For all $a$ and $b$ in $P$ and $C$ relatively algebraically closed in $P$, there is $a'$ in $P$ with the same type as $a$ over $C$ such that $a'\ind_C b$ holds.
	
	\item \label{smallhyp:RelClosure}
	(\textit{relative algebraic closure}) For all $a$ and $a'$ in $P$ such that $a\equiv a'$, if $a$ is relatively algebraically closed, then so is $a'$.

	\item \label{smallhyp:StrongBdd}
	The subset $P$ is \emph{strongly bounded} in $M$, i.e. for every relatively algebraically closed $A$ in $P$, we have that
\[
	\acl(A)= \dcl(A\acl(\emptyset)).
\]
Note that this implies that for all subset $B$ of $A$, we have that 
\[
	\acl(A)= \dcl(A\acl(B)).
\]
\end{enumerate}

\end{property}

The next property will be used again in Section \ref{sec:TransferNExistence}. We assume that it holds for the rest of the section.

\begin{property}[$\star$] 
\phantomsection
\label{Star}

For any tuple $a$ in $P$ and every subset $C$ of $P$ such that $C$ is relatively algebraically closed in $P$, the type $\tp(a/C)$ is stationary. 

\end{property}

We immediately see that if $P$ coincide with the whole structure $M$, then all of these properties are satisfied.
In particular, the results of the rest of the section hold when taking $P=M$.
The other example of such $P$ that we will use is that of a bounded PAC substructure of $M$ (see Section \ref{sec:PAC}).

\begin{remark} \label{rk:ApplicationOfStar}
If $a$ and $b$ are tuples in $P$ and $C$ is a relatively algebraically closed subset of $P$, then $\tp(ab/C)$ is stationary by Property (\nameref{Star}).
In particular the type $\tp(ab/C)$ entails $\tp(ab/\acl(C))$, from which we get 
\[\tp(a/bC) \vdash \tp(a/b\acl(C)).\]
\end{remark}

We use a result present in \cite[Lemma 3.17]{Polkowska_2007} under the name "coheir lemma"; the version we present here is due to Blossier and Martin-Pizarro in \cite[Lemme 1.12]{Blossier_MartinPizarro_2019}.
This version is stronger, as it allows to consider types over existentially closed sets containing an elementary substructures and not just elementary substructures.

\begin{fact}{ \cite[Lemme 1.12]{Blossier_MartinPizarro_2019}} \label{CoheirLemma}
If $Q$ is an $\mcL$-existentially closed substructure of $P$, then for all tuples $a$ and $b$ from $P$ independent over $Q$, the type $\tp(a/\acl(Qb)$ is coheir over $Q$.
\end{fact}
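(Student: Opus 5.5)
The plan is to prove finite satisfiability through definability of types in the stable theory $T$, and then to use the existential closedness of $Q$ in $P$ to pull a witness down into $Q$. Throughout, $T$ is assumed to have quantifier elimination, as in the settings where the paper uses this lemma; Step 3 depends on it.

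\emph{Step 1: $Q$ is relatively algebraically closed in $P$.} Let $c\in\acl(Q)\cap P$, and let $\theta(y)$ be an $\mcL(Q)$-formula isolating the algebraic type of $c$ over $Q$, with exactly $k$ solutions. The formula $\theta(y)\wedge y\neq c'$ for $c'\in Q$ can be handled as follows. If $c\notin Q$, then $P$ satisfies the existential formula $\exists y\,(\theta(y)\wedge y\notin\{\text{the solutions of }\theta \text{ already in }Q\})$. Existential closedness then produces another solution in $Q$, and iterating this gives more than $k$ solutions, a contradiction. Hence Property (\nameref{Star}) and Remark \ref{rk:ApplicationOfStar} apply over $C=Q$.

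\emph{Step 2: reduction to formulas over $Qb$.} Let $\varphi(x,e)\in\tp(a/\acl(Qb))$. By Remark \ref{rk:ApplicationOfStar}, $\tp(a/Qb)\vdash\tp(a/\acl(Qb))$. By compactness there is some $\varphi'(x,b)\in\tp(a/Qb)$, with parameters from $Q$ suppressed, such that $\varphi'(x,b)\vdash\varphi(x,e)$. So it suffices to find $a'\in Q$ with $\models\varphi'(a',b)$.

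\emph{Step 3: definability and pulling down to $Q$.} By (\nameref{Star}), the type $\tp(b/Q)$ is stationary. Its unique nonforking extensions are therefore given by a defining scheme $d\varphi'(x)$ that is $\aut(M/Q)$-invariant. By saturation, each $d\varphi'(x)$ is then equivalent to an $\mcL(Q)$-formula. Since $b\ind_Q a$, the type $\tp(b/Qa)$ is this nonforking extension, so $\models d\varphi'(a)$. By quantifier elimination, $d\varphi'(x)$ may be taken quantifier-free over $Q$. It is thus satisfied in $P$ by the tuple $a$, and existential closedness of $Q$ in $P$ gives $a'\in Q$ with $\models d\varphi'(a')$. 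Now $\tp(b/Qa')=\tp(b/Q)$ is trivially the nonforking extension to $Qa'=Q$. Reading the definition at $a'$ therefore gives $\varphi'(a',y)\in\tp(b/Q)$, that is, $\models\varphi'(a',b)$, as required.

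The main obstacle is Step 3: making sure the defining formula is over $Q$ itself rather than over $\acl\eq(Q)$ or $\dcl\eq(Q)$. This is exactly where stationarity from (\nameref{Star}) is used, in place of elimination of imaginaries. If invariance turns out to give only $\dcl\eq(Q)$, I would first try to recover a real $\mcL(Q)$-formula from the $Q$-invariance of the definable set $d\varphi'(M)$, via saturation of $M$. Step 1 is routine but must be checked carefully, since existential closedness is only relative to $P$.
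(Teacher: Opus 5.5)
The paper does not prove this statement; it quotes it from Blossier--Martin-Pizarro, who refine Polkowska's coheir lemma. Your argument is the standard one behind it: the definition over $Q$ of the stationary type $\tp(b/Q)$ is an $\mcL(Q)$-formula satisfied by $a\in P$, existential closedness moves a witness into $Q$, and reading the definition at that witness gives $\varphi'(a',b)$. Granting quantifier elimination, the proof is correct. Note that Step 1 also uses quantifier elimination, to take $\theta$ quantifier-free. Two justifications need adjusting.

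\emph{Step 2.} Remark \ref{rk:ApplicationOfStar} only yields $\tp(a/Qb)\vdash\tp(a/b\acl(Q))$, not $\tp(a/Qb)\vdash\tp(a/\acl(Qb))$. The stronger entailment needs the hypothesis $a\ind_Q b$, and it follows easily from it. If $a'\equiv_{Qb}a$, then $a'\ind_Q b$, hence $a'\ind_Q\acl(Qb)$. Since $a'\equiv_Q a$ and $\tp(a/Q)$ is stationary by (\nameref{Star}) and Step 1, we get $a'\equiv_{\acl(Qb)}a$.

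\emph{Step 3.} The obstacle you anticipate does not occur, and the fix you suggest is exactly right. Work in a monster model, where the set defined by $d\varphi'$ is definable and invariant under automorphisms fixing $Q$. Write it as $\psi(x,c)$. Compactness gives $\chi(z)\in\tp(c/Q)$ with $\chi(z)\vdash\forall x\,(\psi(x,z)\leftrightarrow\psi(x,c))$. Then $\exists z\,(\chi(z)\wedge\psi(x,z))$ is an $\mcL(Q)$-formula defining the same set. Equivalently, any formula over $\dcl^{eq}(Q)$ is equivalent to an $\mcL(Q)$-formula, so no elimination of imaginaries is needed.

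\emph{Step 1.} A single application of existential closedness already produces a new solution of $\theta$ in $Q$, so no iteration is needed.
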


Note that the converse is also true, i.e. the subsets of $P$ satisfying the conclusion of Fact \ref{CoheirLemma} are exactly the existentially closed substructures of $P$.

The proof of the next lemma comes from the proofs of $n$-existence in \cite[Theorem 2.1]{HrushovskiPFFieldsRelStructures} and of the independence theorem in \cite[Theorem 3.17]{Polkowska_2007}; it shows a useful entailement of types under some independence assumptions.

\begin{lemma}	\label{lemma:SharpEntailment}
Fix an existentially closed subset $Q$ of $P$.
Let $a_1,..., a_n$ be relatively algebraically closed supersets of $Q$ and $b$ a tuple from $P$ independent from $a_1...a_n$ over $Q$.
If $c$ is algebraic over $a_1...a_n$, then 
\[
	\tp(c/a_1...a_n) \vdash \tp(c/\acl(a_1b)...\acl(a_nb)).
\]
\end{lemma}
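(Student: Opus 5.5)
The plan is to argue by contradiction, transferring a witness of the failure down to parameters in $Q$ by the coheir lemma (Fact \ref{CoheirLemma}), and then to rule out that witness by strong boundedness together with Property (\nameref{Star}). Write $a=a_1\dots a_n$ and let $c'$ realize $\tp(c/a)$. Suppose that $c'\not\equiv c$ over $\acl(a_1b)\dots\acl(a_nb)$. Then there are a formula $\varphi$ and finite tuples $d_i\in\acl(a_ib)$ with $\models\varphi(c,d_1,\dots,d_n)\wedge\neg\varphi(c',d_1,\dots,d_n)$. For each $i$, choose an algebraic formula $\psi_i(z_i,a_i,b)$ isolating the finitely many conjugates of $d_i$ over $a_ib$; we may take it to have exactly $k_i$ solutions.

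\emph{Step 1 (coheir transfer).} Consider the formula in the variable $y$:
\[
\theta(y):=\exists z_1\dots z_n\Big(\bigwedge_i\big(\psi_i(z_i,a_i,y)\wedge \exists^{=k_i}z\,\psi_i(z,a_i,y)\big)\wedge\varphi(c,\bar z)\wedge\neg\varphi(c',\bar z)\Big).
\]
Since $c$ and $c'$ are algebraic over $a$, the parameters of $\theta$ lie in $\acl(Qa)$, and $b\models\theta$. The hypothesis $b\ind_Q a$ gives, by Fact \ref{CoheirLemma} applied with the roles of the two tuples swapped, that $\tp(b/\acl(Qa))$ is a coheir over $Q$. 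Hence some $b'$ from $Q$ satisfies $\theta$. Because $Q\subseteq a_i$, this produces tuples $d'_i\in\acl(a_ib')=\acl(a_i)$ with $\varphi(c,\bar d')\wedge\neg\varphi(c',\bar d')$. Consequently it suffices to prove
\[
\tp(c/a_1\dots a_n)\vdash\tp(c/\acl(a_1)\dots\acl(a_n)),
\]
which is the case $b\in Q$ of the statement.

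\emph{Step 2 (reduction to $\acl(\emptyset)$).} Each $a_i$ is relatively algebraically closed, so strong boundedness gives $\acl(a_i)=\dcl(a_i\acl(\emptyset))$. It is therefore enough to show that $\tp(c/a)\vdash\tp(c/a\acl(\emptyset))$. Equivalently, we want an automorphism fixing $a$ together with $\acl(\emptyset)$ (or at least $\acl(a_1)$) pointwise that sends $c$ to $c'$.

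\emph{Step 3 (the expected main obstacle).} Take $\sigma\in\aut(M/a)$ with $\sigma(c)=c'$. The automorphism $\sigma$ permutes $\acl(a_1)$ but need not fix it pointwise, so it has to be corrected. The tool is Property (\nameref{Star}) with $C=a_1$, together with Remark \ref{rk:ApplicationOfStar}. Since $\tp(a/a_1)$ is stationary, $\tp(a/a_1)$ entails $\tp(a/\acl(a_1))$. I would try to use this to find $\rho\in\aut(M/\acl(a_1))$ with $\rho(a)=a$ that undoes $\sigma$ on $\acl(a_1)$. The composite $\rho\sigma$ would then fix $a\acl(a_1)$, and in particular $\acl(\emptyset)$, pointwise.

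The delicate point is that $c$ itself need not lie in $P$, so Property (\nameref{Star}) cannot be applied to $ac$ directly. What must be controlled is $\tp(c/a\acl(a_1))$, and here I would use that $c\in\acl(a)=\dcl(a\acl(\emptyset))$, again by strong boundedness applied to the relatively algebraically closed set $\acl(a)\cap P$. This lets us write $c=f(a,e)$ for some $\emptyset$-definable $f$ and a tuple $e\in\acl(\emptyset)$. The stationarity statement then reduces to comparing $\tp(a,e)$ with $\tp(a,\sigma(e))$, which is exactly what Remark \ref{rk:ApplicationOfStar} controls. I expect this bookkeeping to be the main technical difficulty.
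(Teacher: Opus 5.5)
Your Step 1 is sound. It is the paper's coheir argument in a different order: the paper first reduces to $\acl(a_1)\dots\acl(a_n)$ and then contradicts an isolating formula, while you push a separating formula $\varphi(c,\bar z)\wedge\neg\varphi(c',\bar z)$ down to $Q$ first. Both work, and your $\exists^{=k_i}$ clause correctly guarantees $d_i'\in\acl(a_i)$.

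\textbf{The gap is Step 3.} It is only sketched, and as you set it up it cannot be completed.
\begin{itemize}
\item An automorphism $\rho\in\aut(M/\acl(a_1))$ fixes $\acl(a_1)$ pointwise. Hence $\rho\sigma$ and $\sigma$ agree on $\acl(a_1)$, and nothing is undone.
\item An automorphism $\rho\in\aut(M/a)$ that does agree with $\sigma^{-1}$ on $\acl(a_1)$ sends $c'$ to $\rho(c')$. Controlling $\rho(c')$ is the original problem again.
\item The identity $\acl(a)=\dcl(a\acl(\emptyset))$ does not follow from strong boundedness, because $a_1\dots a_n$ need not be relatively algebraically closed. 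You only get $\dcl((\acl(a)\cap P)\acl(\emptyset))$.
\end{itemize}

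More fundamentally, you insist on allowing $c\notin P$, and then the claim is false. Take $n=1$, $P$ a perfect bounded PAC field that is not algebraically closed, $a_1=Q\preceq P$, and $c\in\acl(Q)\setminus Q$. Since $\dcl(Q)=Q$, the type $\tp(c/Q)$ has a realization $c'\neq c$. But $\tp(c/\acl(Qb))$ contains $x=c$, so the entailment fails.

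So the lemma must be read with $c$ a tuple from $P$. The paper's proof reads it this way: it applies Remark \ref{rk:ApplicationOfStar} to $c$, which requires $c\in P$. Its use in Proposition \ref{prop:RelNUniq} also satisfies this, since there $c=a_{[n]\setminus\{k\}}$ has been arranged to lie in $P$.

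\textbf{The fix.} With $c$ in $P$, your Step 2 target is one application of Property $(\star)$. Apply Remark \ref{rk:ApplicationOfStar} to the tuples $c$ and $a_1\dots a_n$ of $P$ over $C=Q$, as the paper does. Alternatively use $C=\acl(\emptyset)\cap P$, which is relatively algebraically closed and contained in every $a_i$. Since $C\subseteq a_i$, this gives $\tp(c/a_1\dots a_n)\vdash\tp(c/a_1\dots a_n\acl(C))$. Strong boundedness, in the form $\acl(a_i)=\dcl(a_i\acl(C))$, then yields $\tp(c/\acl(a_1)\dots\acl(a_n))$. That is the paper's first step, and together with your Step 1 it proves the lemma.
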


\begin{proof}
By Remark \ref{rk:ApplicationOfStar}, we see that $\tp(c/a_1...a_n)$ entails $\tp(c/a_1...a_n \acl(Q))$, which in turns entails the type $\tp(c/\acl(a_1)...\acl(a_n))$ by Property (\ref{smallhyp:StrongBdd}).
It follows that we only have to show the entailment of types
\[
	\tp(c/\acl(a_1)...\acl(a_n)) \vdash \tp(c/\acl(a_1b)...\acl(a_nb)).
\]
We can find a formula $\phi(x)$ isolating the type of $c$ over $\acl(a_1)...\acl(a_n)$ and a formula $\psi(x, d_1...d_n)$ isolating the type of $c$ over $\acl(a_1b)...\acl(a_nb)$.
Take then the $\theta_i(y_i, a_i, b)$'s to be formulae witnessing the algebraicity of $d_i$ over $a_ib$.
Assume by contradiction that the result of the lemma is not satisfied, i.e. that $\phi(x)$ is strictly contained in $\psi(x, d_1...d_n)$.
The formula 
$$\rho(z) = \exists y_1...y_n, \psi(c, y_1...y_n) \wedge "\psi(x, y_1...y_n) \subsetneq \phi(x)"\wedge \bigwedge_{i\leq n} \theta_i(y_i, a_i, z) $$ 
then belongs to the type of $b$ over $\acl(a_1...a_n)$.
This implies, by Fact \ref{CoheirLemma}, that $\rho$ has a realization $b'$ contained in $Q$.
We then obtain elements $d_1',...,d_n'$ with $d_i$ belonging to $\acl(a_iQ)=\acl(a_i)$,  such that $\psi(c, d_1'...d_n')$ holds and $\psi(x, d_1'...d_n')$ is strictly contained in $\phi(x)$.
This contradicts the fact that $\phi(x, a_1...a_n)$ isolates the type of $c$ over $\acl(a_1)...\acl(a_n)$.
\end{proof}

\begin{remark}
The conclusion of Lemma \ref{lemma:SharpEntailment} implies
\[
\dcl(\acl(a_1b)...\acl(a_nb))\cap \acl(a_1...a_n) = \dcl(\acl(a_1)...\acl(a_n))
\]
i.e. the conclusion of property $(\sharp_n)$ of Fact \ref{Sharp} holds.
Conversely, if $T$ has elimination of imaginaries, the set $P$ is definably closed (or more generally if every element of $\dcl(P)$ is interdefinable with a tuple of elements from $P$) and the conclusion of $(\sharp)_n$ holds for $a_1,...,a_n$ and $b$ in $P$, then so does the conclusion of Lemma \ref{lemma:SharpEntailment}, as shown in \cite[Proposition 1.5]{DePiro_Kim_Millar_2005}.
\end{remark}

We say that an amalgamation system $(p_w(x_w), w\in W)$ over a relatively algebraically closed subset $Q$ of $P$ is \textit{algebraically closed relatively to} $P$ if every $p_w$ has a realization which is relatively algebraically closed in $P$.
In such a system, Property (\nameref{Star}) yields that the $p_w$'s are stationary.

\begin{proposition} \label{prop:RelNUniq}
Let $Q$ be an existentially closed subset of $P$.
Fix  an amalgamation system $S=(p_w(x_w), w\in W)$ over $Q$ which is algebraically closed relatively to $P$.
If $S$ has a minimal completion, then this completion is unique and stationary.
\end{proposition}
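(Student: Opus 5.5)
The plan is to argue by induction on the size of the index set $\bigcup W$, following the usual strategy for $n$-uniqueness in \cite{HrushovskiPFFieldsRelStructures} and \cite{DePiro_Kim_Millar_2005}, with Lemma \ref{lemma:SharpEntailment} taking the place of property $(\sharp_n)$.

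\textbf{Stationarity.} This part is cheap. A minimal completion $p_\top$ has a realization $a_\top=\bigcup_{w\in W}a_w$ consisting of elements of $P$: each $a_w$ may be taken relatively algebraically closed in $P$, and the $a_w$'s are compatible. Since $Q$ is relatively algebraically closed (existentially closed subsets are), Property (\nameref{Star}) shows that $\tp(a_\top/Q)$ is stationary. So the real content is uniqueness.

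\textbf{Setting up the induction.} Let $p_\top$ and $p'_\top$ be two minimal completions, with realizations $a_\top$ and $a'_\top$ in $P$. Fix $j\in\bigcup W$ and put $W'=\{w\in W: j\notin w\}$.
\begin{enumerate}
\item The restrictions of $p_\top$ and $p'_\top$ to $\bigcup_{w\in W'}x_w$ are minimal completions of the subsystem $(p_w,w\in W')$. By induction they coincide, so after an automorphism we may assume $A':=\bigcup_{w\in W'}a_w$ equals $\bigcup_{w\in W'}a'_w$.
\item By independence, $a_j\ind_Q A'$ and $a'_j\ind_Q A'$, and $a_j\equiv_Q a'_j$. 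Since $A'$ is algebraic over $Q\cup\{a_i:i\neq j\}$, stationarity of $p_j$ (Property (\nameref{Star})) gives $a_j\equiv_{A'}a'_j$.
\item After a further automorphism we may assume $a_j=a'_j$ as well.
\end{enumerate}

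\textbf{Main step and expected obstacle.} It remains to compare the tuples $a_w$ and $a'_w$ for $w\ni j$. Each such $a_w$ lies in $\acl(a_j\,a_{w\setminus\{j\}})$, and
\[
\tp(a_w\,a_{w\setminus\{j\}}\,a_j/Q)=p_w=\tp(a'_w\,a_{w\setminus\{j\}}\,a_j/Q).
\]
I will apply Lemma \ref{lemma:SharpEntailment} with the following choices:
\begin{itemize}
\item the relatively algebraically closed sets are the $a_v$ for $v$ maximal in $W'$ (they contain $Q$);
\item the parameter is $b=a_j$, which is independent from them over $Q$;
\item $c$ is $A'$ itself.
\end{itemize}
The lemma then gives
\[
\tp(A'/a_{v_1}\dots a_{v_m})\vdash\tp(A'/\acl(a_{v_1}a_j)\dots\acl(a_{v_m}a_j)).
\]
So the type of $A'$ over the union $D$ of the sets $\acl(a_va_j)$ is determined by data common to both completions. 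This reduces the problem to showing that the tuples $(a_w)_{w\ni j}$ and $(a'_w)_{w\ni j}$, which lie in $D$, have the same type over $a_j$ together with the pieces $a_{w\setminus\{j\}}$.

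I expect this last point to be the main obstacle: the individual types $p_w$ must glue to a single type of the family $(a_w)_{w\ni j}$ over $a_j$. I would handle it by applying the inductive hypothesis to the derived system
\[
\bigl(\tp(a_w/\acl(Qa_j)),\ w\ni j\bigr)
\]
over the relatively algebraically closed set $\acl(Qa_j)\cap P$, which has fewer indices. To do this, I will use Lemma \ref{ExtensionAmalgSystem} and Remark \ref{rk:ApplicationOfStar} to check that it is again an amalgamation system, algebraically closed relative to $P$. I will use Property (\ref{smallhyp:StrongBdd}) to pass between $\acl$ and $\dcl$ with parameters in $\acl(Q)$. The delicate points are whether this base is still existentially closed, and how the enumerations of the $a_w$ match on intersections. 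Combining the glued type with the entailment above then yields $a_\top\equiv_Q a'_\top$.
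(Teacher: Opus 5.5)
Your argument has two genuine gaps. First, nothing lets you take a realization of a minimal completion inside $P$. The hypothesis gives each $p_w$ \emph{separately} a realization in $P$. Finding compatible realizations of all of them in $P$ at once does not follow from this; in the PAC application it is precisely a consequence of the proposition, since a minimal completion is shown to be stationary and hence realized in $P$. So Property (\nameref{Star}) cannot be applied to $a_\top$, and your stationarity step fails. For the same reason, the membership in $P$ that Lemma \ref{lemma:SharpEntailment} requires is unjustified. The paper instead proves that any two completions have the same type over $\acl(Q)$, and stationarity then follows from TACSS. Uniqueness over $Q$ alone, as in your Step 1, would not give this. At each step the paper moves only the two pieces it needs, $a_{[n]\setminus\{k\}}$ and $a_k$, into $P$. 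It does so using relative extension, the stationarity of $p_{[n]\setminus\{k\}}$ and $p_k$, and $a_{[n]\setminus\{k\}}\ind_Q a_k$.

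Second, your use of Lemma \ref{lemma:SharpEntailment} is vacuous. Since $x_w\subseteq x_v$ whenever $w\subseteq v$, the tuple $A'$ is just the union of the $a_v$ with $v$ maximal in $W'$; for $W=\mathfrak{P}([n])^-$ there is only one, $A'=a_{[n]\setminus\{j\}}$. So $\tp(A'/a_{v_1}\dots a_{v_m})$ has $A'$ as its only realization, and the entailment carries no information. What has to be controlled is the type of the face $A'$ over the faces $a_{[n]\setminus\{i\}}$, $i\neq j$, that contain $j$. The lemma delivers this if you take as parameter sets the codimension-one subfaces $a_{[n]\setminus\{i,j\}}$ of $A'$, with $b=a_j$. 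Then $A'$ is algebraic over their union but not contained in it, $a_j$ is independent from them over $Q$, and $\acl(a_{[n]\setminus\{i,j\}}a_j)$ contains $\acl(Q)\,a_{[n]\setminus\{i\}}$.

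This presupposes that the type over $\acl(Q)$ of the union of the faces containing $j$ is already determined. You propose to obtain that by induction over $\acl(Qa_j)\cap P$. That base need not be existentially closed in $P$, and existential closedness is exactly what feeds Fact \ref{CoheirLemma}, hence Lemma \ref{lemma:SharpEntailment} and the proposition itself. So the inductive hypothesis is not available there.

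The paper never changes the base and never inducts on $n$; it adds the top faces one at a time over $Q$. The first two faces $a_{[n]\setminus\{1\}}$ and $a_{[n]\setminus\{2\}}$ are glued over $a_{\{3,\dots,n\}}$ by the stationarity of $\tp(a_{\{1,3,\dots,n\}}/a_{\{3,\dots,n\}})$, given by Property (\nameref{Star}). For $3\le k\le n$, the face $a_{[n]\setminus\{k\}}$ is added to the faces $a_{[n]\setminus\{i\}}$, $i<k$, by the corrected entailment with $b=a_k$ and parameter sets $a_{[n]\setminus\{i,k\}}$, $i<k$. These suffice because $k\ge 3$. Your intended final step is the case $k=n$, $j=n$ of this induction.
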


\begin{proof}
We assume that $W= \mathfrak{P}([n])^-$.
Let us introduce some notations: 
for $2\leq k \leq n+1$, we denote 
$\Delta_k= \{w\in W| [k,n] \sseq w \text{ and } |w|=n-1\}$ and $\Gamma_k= \{ w\in W| [k+1,n] \sseq w, k\notin w \text{ and } |w|=n-2 \}$.
Moreover, if $a_\top$ is a realization of a completion of $S$ and $V$ a subset of $W$, set $a_V= \bigcup_{v\in V} a_v$.
In particular, we have
\[
	a_{\Delta_k} = a_{[n]\setminus \{1\}}... a_{[n]\setminus \{k-1\}}
	\text{ and }
	a_{\Gamma_k} = a_{[n]\setminus \{1, k\}}... a_{[n]\setminus \{k-1, k\}}
\]
\noindent
Remark also that we have  $a_{\Delta_{k+1}}= a_{[n]\setminus \{k\}} a_{\Delta_k}$, 
that $a_{\Gamma_k}\sseq a_{\Delta_k}\cap a_{[n]\setminus \{k\}}$,
and that  $a_{\Delta_3} = a_{\{1,3,...,n\}} a_{\{2,3,...,n\}}$.

\noindent
We show by induction on $k\geq 3$ that if $a_\top$ and $b_\top$ are such that their types over $Q$ are completions of $S$, then $a_{\Delta_k}$ and $b_{\Delta_k}$ have the same type over $\acl(Q)$; this will prove the result as we have $a_{\Delta_{n+1 }}= a_\top$ and types over algebraically closed sets are stationary.

\begin{example}
We represent the different cases for $n=4$, where we circled the set $a_{[n]\setminus\{k\}}$, boxed the set $a_{\Delta_k}$ and boxed with rounded angles the set $a_{\Gamma_k}$.

\medskip
\begin{center}
\begin{tikzpicture}

\draw[draw=black] (4.7,2.75) rectangle ++(0.9,0.6);
\draw[fill=none] (3.25,3) circle (0.5) ;
\draw[rounded corners] (6.3, 1.2) rectangle (7.1,1.8);

\node (A0) at (2.4,-1.3) {$a_\emptyset$};

\node (A1) {$a_1$};
\node (A2) [right= of A1] {$a_2$};
\node (A3) [right= of A2] {$a_3$};
\node (A4) [right= of A3] {$a_4$};

\node (A12) at (-2,1.5) {$a_{12}$};
\node (A13) [right= of A12] {$a_{13}$};
\node (A23) [right= of A13] {$a_{23}$};
\node (A14) [right= of A23] {$a_{14}$};
\node (A24) [right= of A14] {$a_{24}$};
\node (A34) [right= of A24] {$a_{34}$};

\node (A123) at (-0.5,3) {$a_{123}$};
\node (A124) [right= of A123] {$a_{124}$};
\node (A134) [right= of A124] {$a_{134}$};
\node (A234) [right= of A134] {$a_{234}$};

\draw [->, dashed] (A0) -- (A1) node  {};
\draw [->, dashed] (A0) -- (A2) node  {};
\draw [->] (A0) -- (A3) node  {};
\draw [->] (A0) -- (A4) node  {};

\draw[->, dashed] (A1) -- (A12) node {};
\draw[->, dashed] (A1) -- (A13) node {};
\draw[->, dashed] (A1) -- (A14) node {};
\draw[->, dashed] (A2) -- (A12) node {};
\draw[->, dashed] (A2) -- (A23) node {};
\draw[->, dashed] (A2) -- (A24) node {};
\draw[->, dashed] (A3) -- (A13) node {};
\draw[->, dashed] (A3) -- (A23) node {};
\draw[->] (A3) -- (A34) node {};
\draw[->, dashed] (A4) -- (A14) node {};
\draw[->, dashed] (A4) -- (A24) node {};
\draw[->] (A4) -- (A34) node {};

\draw[->, dashed] (A12) -- (A123) node {};
\draw[->, dashed] (A13) -- (A123) node {};
\draw[->, dashed] (A23) -- (A123) node {};
\draw[->, dashed] (A12) -- (A124) node {};
\draw[->, dashed] (A14) -- (A124) node {};
\draw[->, dashed] (A24) -- (A124) node {};
\draw[->, dashed] (A13) -- (A134) node {};
\draw[->, dashed] (A14) -- (A134) node {};
\draw[->] (A34) -- (A134) node {};
\draw[->, dashed] (A23) -- (A234) node {};
\draw[->, dashed] (A24) -- (A234) node {};
\draw[->] (A34) -- (A234) node {};
\end{tikzpicture}
\end{center}
\nopagebreak
\begin{center}
{The case $k=2$}
\end{center}
\medskip

\begin{center}
\begin{tikzpicture}

\draw[draw=black] (2.7,2.75) rectangle ++(2.9,0.6);
\draw[fill=none] (1.4,3) circle (0.5) ;
\draw[rounded corners] (2.75, 1.2) rectangle (5.3,1.8);

\node (A0) at (2.4,-1.3) {$a_\emptyset$};

\node (A1) {$a_1$};
\node (A2) [right= of A1] {$a_2$};
\node (A3) [right= of A2] {$a_3$};
\node (A4) [right= of A3] {$a_4$};

\node (A12) at (-2,1.5) {$a_{12}$};
\node (A13) [right= of A12] {$a_{13}$};
\node (A23) [right= of A13] {$a_{23}$};
\node (A14) [right= of A23] {$a_{14}$};
\node (A24) [right= of A14] {$a_{24}$};
\node (A34) [right= of A24] {$a_{34}$};

\node (A123) at (-0.5,3) {$a_{123}$};
\node (A124) [right= of A123] {$a_{124}$};
\node (A134) [right= of A124] {$a_{134}$};
\node (A234) [right= of A134] {$a_{234}$};

\draw [->] (A0) -- (A1) node  {};
\draw [->] (A0) -- (A2) node  {};
\draw [->, dashed] (A0) -- (A3) node  {};
\draw [->] (A0) -- (A4) node  {};

\draw[->, dashed] (A1) -- (A12) node {};
\draw[->, dashed] (A1) -- (A13) node {};
\draw[->] (A1) -- (A14) node {};
\draw[->, dashed] (A2) -- (A12) node {};
\draw[->, dashed] (A2) -- (A23) node {};
\draw[->] (A2) -- (A24) node {};
\draw[->, dashed] (A3) -- (A13) node {};
\draw[->, dashed] (A3) -- (A23) node {};
\draw[->, dashed] (A3) -- (A34) node {};
\draw[->] (A4) -- (A14) node {};
\draw[->] (A4) -- (A24) node {};
\draw[->, dashed] (A4) -- (A34) node {};

\draw[->, dashed] (A12) -- (A123) node {};
\draw[->, dashed] (A13) -- (A123) node {};
\draw[->, dashed] (A23) -- (A123) node {};
\draw[->, dashed] (A12) -- (A124) node {};
\draw[->] (A14) -- (A124) node {};
\draw[->] (A24) -- (A124) node {};
\draw[->, dashed] (A13) -- (A134) node {};
\draw[->] (A14) -- (A134) node {};
\draw[->, dashed] (A34) -- (A134) node {};
\draw[->, dashed] (A23) -- (A234) node {};
\draw[->] (A24) -- (A234) node {};
\draw[->, dashed] (A34) -- (A234) node {};

\end{tikzpicture}
\end{center}
\nopagebreak
\begin{center}
{The case $k=3$}
\end{center}
\bigskip

\begin{center}
\begin{tikzpicture}

\draw[draw=black] (0.85,2.75) rectangle ++(4.8,0.6);
\draw[fill=none] (-0.5,3) circle (0.5) ;
\draw[rounded corners] (-2.5, 1.2) rectangle (2,1.8);

\node (A0) at (2.4,-1.3) {$a_\emptyset$};

\node (A1) {$a_1$};
\node (A2) [right= of A1] {$a_2$};
\node (A3) [right= of A2] {$a_3$};
\node (A4) [right= of A3] {$a_4$};

\node (A12) at (-2,1.5) {$a_{12}$};
\node (A13) [right= of A12] {$a_{13}$};
\node (A23) [right= of A13] {$a_{23}$};
\node (A14) [right= of A23] {$a_{14}$};
\node (A24) [right= of A14] {$a_{24}$};
\node (A34) [right= of A24] {$a_{34}$};

\node (A123) at (-0.5,3) {$a_{123}$};
\node (A124) [right= of A123] {$a_{124}$};
\node (A134) [right= of A124] {$a_{134}$};
\node (A234) [right= of A134] {$a_{234}$};

\draw [->] (A0) -- (A1) node  {};
\draw [->] (A0) -- (A2) node  {};
\draw [->] (A0) -- (A3) node  {};
\draw [->, dashed] (A0) -- (A4) node  {};

\draw[->] (A1) -- (A12) node {};
\draw[->] (A1) -- (A13) node {};
\draw[->, dashed] (A1) -- (A14) node {};
\draw[->] (A2) -- (A12) node {};
\draw[->] (A2) -- (A23) node {};
\draw[->, dashed] (A2) -- (A24) node {};
\draw[->] (A3) -- (A13) node {};
\draw[->] (A3) -- (A23) node {};
\draw[->, dashed] (A3) -- (A34) node {};
\draw[->, dashed] (A4) -- (A14) node {};
\draw[->, dashed] (A4) -- (A24) node {};
\draw[->, dashed] (A4) -- (A34) node {};

\draw[->] (A12) -- (A123) node {};
\draw[->] (A13) -- (A123) node {};
\draw[->] (A23) -- (A123) node {};
\draw[->] (A12) -- (A124) node {};
\draw[->, dashed] (A14) -- (A124) node {};
\draw[->, dashed] (A24) -- (A124) node {};
\draw[->] (A13) -- (A134) node {};
\draw[->, dashed] (A14) -- (A134) node {};
\draw[->, dashed] (A34) -- (A134) node {};
\draw[->] (A23) -- (A234) node {};
\draw[->, dashed] (A24) -- (A234) node {};
\draw[->, dashed] (A34) -- (A234) node {};

\end{tikzpicture}
\end{center}
\nopagebreak
\begin{center}
{The case $k=4$}
\end{center}
\end{example}

\bigskip

\noindent
Assume that $\tp(a_\top/Q)$ and $\tp(b_\top/Q)$ are two completions of of $S$.
Note that $a_{\{2,3,...,n\}}$ and $b_{\{2,3,...,n\}}$ have the same type over $Q$, so we may assume that they are equal.

Let $c_{\{1,3,...,n\}}$ be a realization of $p_{\{1,3,...,n\}}$ contained in $P$.
Then $c_{\{3,...,n\}}$ is relatively algebraically closed by Property (\ref{smallhyp:RelClosure}), whence $\tp(c_{\{1,3,...,n\}}/c_{\{3,...,n\}})$ is stationary by Property (\nameref{Star}).
It follows that the same holds for any realization of $p_{\{1,3,...,n\}}$; in particular the type $\tp(a_{\{1,3,...,n\}}/a_{\{3,...,n\}})$ is stationary.
Since $b_{\{1,3,...,n\}}$ has the same type as $a_{\{1,3,...,n\}}$ over $a_{\{3,...,n\}}$ and both are independent from $a_{\{2,3,...,n\}}$ over $a_{\{3,...,n\}}$, we get that they have the same type over $a_{\{3,...,n\}}$, and that this type is stationary.
Now the type $\tp(a_{\{3,...,n\}}/Q)$ is also stationary, so we obtain that $a_{\Delta_3}$ and $b_{\Delta_3}$ have the same type over $Q$ and that this type is stationary.

\medskip

Assume now by induction that $a_{\Delta_k}$ and $b_{\Delta_k}$ have the same type over $\acl(Q)$, for $3\leq k \leq n$. 
We may in particular assume that they are equal; let us show that $a_{[n]\setminus \{k\}}$ and $b_{[n]\setminus \{k\}}$ have the same type over $a_{\Delta_k} \acl(Q)$.

We may assume that both $a_{[n]\setminus \{k\}}$ and $a_{k}$ are contained in $P$.
To see this, recall that both  $p_{[n]\setminus \{k\}}$ and $p_{k}$ have realizations which are contained in $P$ by hypothesis on the system.
We can then use Property (\ref{smallhyp:RelExtension}) to find realizations $a_{[n]\setminus \{k\}}'$ and $a_{k}'$ of $p_{[n]\setminus \{k\}}$ and $p_{k}$ respectively which are both contained in $P$ and are independent over $Q$.
Now $p_{[n]\setminus \{k\}}$ and $p_{k}$ are stationary, whence $a_{[n]\setminus \{k\}}'a_{k}'$ has the same type as $a_{[n]\setminus \{k\}}a_{k}$: replacing $a_\top$ by its image under an automorphism mapping $a_{[n]\setminus \{k\}}a_{k}$ to $a_{[n]\setminus \{k\}}'a_{k}'$ yields the result.

\smallskip

We know that $a_{[n]\setminus \{k\}}$ and $b_{[n]\setminus \{k\}}$ are both realizations of $p_{[n]\setminus \{k\}}$, so they have the same type over $\acl(Q)$.
It follows that  $a_{[n]\setminus\{k\}}a_{\Gamma_k} \equiv_{\acl(Q)} b_{[n]\setminus\{k\}} b_{\Gamma_k}$, since $\Gamma_k$ is contained in ${[n]\setminus\{k\}}$.
Now $\Gamma_k$ is also contained in $\Delta_k$ and we assumed $a_{\Delta_k}=b_{\Delta_k}$, so we in fact have $a_{[n]\setminus\{k\}} \equiv_{a_{\Gamma_k} \acl(Q)} b_{[n]\setminus\{k\}}$.

Remark now that $a_{[n]\setminus\{k\}}$ is algebraic over $a_{\Gamma_k}$ (because $k\geq 3$) and that $a_{\Gamma_k}$ is independent from $a_k$ over $C$.
Moreover we have $a_{\Gamma_k} = (a_v, v\in \Gamma_k)$ and $a_{\Delta_k}=(a_w, w\in \Delta_k) = (\acl(a_va_k), v\in \Gamma_k)$.
We can therefore apply Lemma \ref{lemma:SharpEntailment}, which yields
\[
	\tp(a_{[n]\setminus\{k\}}/a_v, v\in \Gamma_k) \vdash \tp(a_{[n]\setminus\{k\}}/\acl(a_v a_k), v\in {\Gamma_k}).
\]
Recall that both $a_{[n]\setminus \{i\}}$ and $\acl(Q)$ are contained in $\acl(a_{[n]\setminus \{k, i\}}a_k)$ for $i< k$ because the system is controlled, 
so we actually have proven that $\tp(a_{[n]\setminus\{k\}}/a_{\Gamma_k})$ entails $\tp(a_{[n]\setminus\{k\}}/a_{\Delta_k}\acl(Q))$.

Finally, we get that $a_{[n]\setminus\{k\}}$ and $b_{[n]\setminus\{k\}}$ have the same type over $a_{\Delta_k}\acl(Q)$, which concludes the induction step and then the proof.
\end{proof}

In the case where $P$ coincide with $M$, Proposition \ref{prop:RelNUniq} states that $T$ has $n$-uniqueness over existentially closed sets for all $n$.
In particular, we can apply Lemma \ref{lemma:UniquenessImpliesExistence} to obtain the following generalization of Fact \ref{Sharp}:

\begin{cor} \label{cor:nUniqu}
If $T$ is a stable theory such that types over algebraically closed sets are stationary, then $T$ has $n$-uniqueness and $n$-existence over existentially closed subsets for all $n$.
In particular, the theory $T$ has $n$-uniqueness and $n$-existence over elementary substructures.
\end{cor}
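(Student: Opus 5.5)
We take $P=M$ and prove the two parts of the corollary one after the other: $n$-uniqueness from Proposition \ref{prop:RelNUniq}, then $n$-existence by induction on $n$ via Lemma \ref{lemma:UniquenessImpliesExistence}.

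\textbf{The hypotheses with $P=M$.} Relative extension is the usual extension property of forking in the stable theory $T$. Relative algebraic closure is immediate, since being algebraically closed is preserved by automorphisms. Strong boundedness is trivial because $\acl(A)=A$ for algebraically closed $A$. Property (\nameref{Star}) is exactly the assumption that types over algebraically closed sets are stationary. In this setting, an amalgamation system that is algebraically closed relative to $P$ is just an algebraically closed system over $Q$, and an existentially closed subset of $M$ is algebraically closed. So the results of the subsection apply.

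\textbf{$n$-uniqueness.} Fix an existentially closed $Q$ and an algebraically closed $\mathfrak{P}([n])^-$-system over $Q$. By Proposition \ref{prop:RelNUniq}, any two minimal completions coincide. For a general $W\sseq\frP([n])$ closed under subsets, we reduce to the $\mathfrak{P}([k])^-$ case. A minimal completion of a $W$-system is the type of $\bigcup_{w\in W} a_w$, and the same induction over the maximal elements of $W$ gives uniqueness. One can also simply invoke the proof of Proposition \ref{prop:RelNUniq}, which only used that the $p_w$ are stationary and Lemma \ref{lemma:SharpEntailment}; it adapts verbatim.

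\textbf{$n$-existence.} We argue by induction on $n$. $2$-existence is the extension property, which holds in any simple theory. Given $n$-uniqueness over $Q$ (shown above) and $2$-existence, Lemma \ref{lemma:UniquenessImpliesExistence} yields $(n+1)$-existence over $Q$. Since uniqueness holds for every $n$, existence also holds for every $n$.

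\textbf{Elementary substructures.} Every elementary substructure is existentially closed in $M$, so the final clause follows.

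\textbf{Main obstacle.} The only delicate point is passing from $\mathfrak{P}([n])^-$-systems to arbitrary downward-closed $W$ in the uniqueness statement. We handle it as described above, using minimal completions and the stationarity of each $p_w$.
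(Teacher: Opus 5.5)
Your proposal follows the paper's own argument: specialize Section~\ref{sec:RelUniq} to $P=M$, read off $n$-uniqueness from Proposition~\ref{prop:RelNUniq}, and then obtain $(n+1)$-existence from Lemma~\ref{lemma:UniquenessImpliesExistence}, with your check of the hypotheses for $P=M$ spelling out what the paper calls immediate. The one inaccuracy is your aside that an existentially closed subset of $M$ is automatically algebraically closed, which can fail when $T$ lacks quantifier elimination; it is harmless, because Lemma~\ref{lemma:UniquenessImpliesExistence} explicitly assumes an algebraically closed base and Proposition~\ref{prop:RelNUniq} concerns systems over a relatively algebraically closed $Q$, so you should simply restrict to algebraically closed $Q$ (the paper notes that an algebraically closed system forces this anyway).
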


\section{Transfer of generalized amalgamation} 	\label{sec:TransferNExistence}

\subsection{The general setting}		\label{sec:MainThm}

We consider a complete simple theory $T_0$ in a language $\mcL_0$ with quantifier elimination (QE).
Take an expansion $\mcL_1$ of $\mcL_0$ such that every symbol in $\mcL'=\mcL_1 \setminus \mcL_0$ is either a constant, a unary relation or a unary function symbol.
Let $T_1$ be a complete simple $\mcL_1$-theory containing the universal part $T_0^\forall$ of $T_0$ (i.e. every model of $T_1$ can be embedded into a model of $T_0$). 
In the following, fix a $\kappa$-saturated model $N$ of $T_1$ contained in a $\kappa$-saturated model $M$ of $T_0$, for some arbitrarily large cardinal $\kappa> |T_1|$.
We say that a set or a tuple is small if it has cardinality strictly smaller than $\kappa$

\smallskip

Given any tuple $a$ of $N$ and any subset $B$ of $N$, we will denote by $\tp_1(a/B)$ the $\mcL_1$-type of $a$ over $B$ in the $\mcL_1$-structure $N$ and by $\tp_0(a/B)$ the $\mcL_0$-type of $a$ over $B$ in the $\mcL_0$-structure $M$.
By extension, any concept indexed by $1$  will refer to its interpretation in the $\mcL_1$-structure $N$, while any concept indexed by $0$ will refer to its interpretation in the $\mcL_0$-structure $M$.

\bigskip

We make several hypotheses on the theories $T_0$ and $T_1$, which we list below.
These hypotheses are all satisfied in the theories we will consider, which will allow us to transfer $n$-existence in all of those contexts.
They were chosen so that the proof of Theorem \ref{thm:AbstractAmalgamationTransfer} works and may therefore seem unnatural; we discuss in Subsection \ref{sec:VerifHyp} some of the more natural ways in which they arise.

\bigskip
\noindent
\textbf{\namedlabel{hyp:StrongBdd}{H1}.} The structure $N$ is \textit{strongly bounded} in $M$, i.e. for all relatively $T_0$-algebraically closed subset $A$ of $N$ we have that
	\[
	\acl_0(A)=\dcl_0(A\acl_0(\emptyset)).
	\]

\begin{remark}
\
\begin{enumerate}
 	\item Conditions similar to this one appear for example in \cite[Proposition 2.5]{Pillay_Polkowska_2006} or in \cite[Hypothèse 4]{Blossier_MartinPizarro_2019}.
	\item This hypothesis holds trivially whenever $N$ is algebraically closed in $M$, as any relatively $T_0$-algebraically closed subset of $N$ is already algebraically closed in $M$.
	
	\item In most of the examples below, this hypothesis will be satisfied after adding parameters for an elementary substructure of $N$ to the theories $T_0$ and $T_1$.	
	
	\item This hypothesis is preserved under adding parameters: if $A$ is relatively $T_0$-algebraically closed in $N$ and $B$ is contained in $A$, then \ref{hyp:StrongBdd} implies that 
		\[\acl_0(AB)= \dcl_0(A\acl_0(B)).\]
	
\end{enumerate}
\end{remark}

\bigskip
\noindent
\textbf{\namedlabel{hyp:CoheirLemmaLight}{H2}.}
If $C$ is a relatively $T_0$-algebraically closed subset of $N$ and $a$ is any tuple from $N$, then the type $\tp_0(a/C)$ has a unique extension to $\acl_0(C)$.

\medskip
\noindent
In practice, there are two main contexts where \ref{hyp:CoheirLemmaLight} holds: if $\acl_0(C)$ equals $\dcl_0(C)$ (which is in particular the case when $C=\acl_0(C))$, or if the type $\tp_0(a/C)$ is stationary.

\medskip 
\noindent
We assume that there is a class $\mcG$ of small $\mcL'$-substructures of $N$,  preserved by $\mcL_1$-automorphisms of $N$, such that the following hypothesis hold:

\bigskip
\noindent
\textbf{\namedlabel{hyp:InteractionAlgClosure}{H3}.}
If $a$ is a $T_1$-algebraically closed subset of $N$, then $a$ belongs to $\mcG$.
\\
Moreover, if $a$ belongs to $\mcG$, then
\[
	\acl_1(a)=\acl_0(a) \cap N.
\]

\medskip
\noindent
We will also require a characterization of the independence relation $\ind^1$ in terms of $\ind^0$ on the class $\mcG$. 

\bigskip
\noindent
\textbf{\namedlabel{hyp:CaracIndep}{H4}.}
For all $A,B$ and $C$ in $\mcG$ with $C\sseq A\cap B$, we have that
\[
	A \ind_C^1 B \Leftrightarrow A\ind^0_C B  \text{ and } AB \text{ belongs to } \mcG.
\]

\medskip

Remark that if $\mcG$ is closed under union, then this characterization is equivalent to saying that $\ind^1$ and $\ind^0$ coincide for elements of $\mcG$.
There are also known examples of theories in which the independence follows such a characterization with a nontrivial class $\mcG$, such as the theories of lovely pairs, of $H$-structures or of $\kappa$-PAC beautiful pairs: we will study these theories with more details is Section \ref{sec:PairsOfStructures}.

\bigskip
\noindent
\textbf{\namedlabel{hyp:SameType}{H5}.}
For all $a$ and $b$ tuples of $N$ such that $a$ belongs to $\mcG$ and $b$ is algebraically closed in $T_1$, if $a$ and $b$ have the same quantifier-free $\mcL_1$-type, then they have the same $\mcL_1$-type.

\bigskip

Note that if $a$ is a $\mcL_1$-substructure of $N$, then $\qftp_1(a)$ can be identified with the pair $(\tp_0(a), \qftp'(a))$.
More generally, we define an \textit{annotation} on a (quantifier-free) $\mcL_0$-type $p(x)$ as the quantifier-free $\mcL'$-type $I(x)$ of a $\mcL'$-structures in the variable $x$, and we say that the pair $(p(x), I)$ is an \textit{annotated type}.
If $p(x)\cup I(x)$ has a realization $a$ belonging to $\mcG$ we then say that $(p,I)$ is a \textit{fair type}, and that it is a \textit{relatively algebraically closed fair type} if we can choose $a$ to be  algebraically closed in $T_1$. 

Two annotated types $(p_1(x_1), I_1)$ and $(p_2(x_2), I_2)$ are said to be compatible if $p_1(x_1) \rest x_1\cap x_2 = p_2(x_2) \rest x_1\cap x_2$ and $I_1(x_1) \rest x_1\cap x_2= I_2(x_2) \rest x_1\cap x_2$.
In that case, if $p(x_1x_2)$ is a completion of $p_1(x_1) \cup  p_2(x_2)$, then $(p(x_1x_2), I_1I_2)$ is again an annotated pair by choice of the language $\mcL'$.

Analogously, we define an \textit{annotated amalgamation system} as a family  of annotated types $(p_w(x_w),  I_w)_{w\in W}$ such that $(p_w(x_w))_{w\in W}$ is an amalgamation system in the sense of $T_0$ and the family $(I_w)_{w\in W}$ is compatible (i.e. for all $v$ and $w$ in $W$, we have $I_v\cap I_w = I_{v\cap w}$).
It is a \textit{relatively algebraically closed fair system} if all the $p_w$'s are  relatively algebraically closed fair types, and a \textit{strongly fair system} if for all $w$ in $W$, the type $p_w$ has a realization $a_w$ such that for all $v_1,...,v_m$ contained in $w$, the union  $a_{v_1}...a_{v_m}$ belongs to $\mcG$.
Finally, we say that it is a \textit{relatively algebraically closed strongly fair system} if it is both a relatively algebraically closed fair system and a strongly fair system.

\begin{remark}
If $(p_w(x_w), I_w)_{ w\in W}$ is a relatively algebraically closed strongly fair system, then we see by \ref{hyp:SameType} that every realization of $p_w$ witnessing the strongly fair system property also witnesses the relatively algebraically closed fair system property, i.e. any realization $a_w$ of $p_w\cup I_w$  belonging  $\mcG$ is algebraically closed in $T_1$.
\end{remark}

\bigskip
\noindent
\textbf{\namedlabel{hyp:ContinuityFSystems}{H6}.}
Every minimal completion of a relatively algebraically closed strongly fair  system is a strongly fair system.
\\
In other words, if $(p_w(x_w), I_w)_{ w\in W}$ is a relatively algebraically closed strongly fair system and $p_\top(x_\top)$ is a minimal completion of $(p_w(x_w), w\in W)$, then the system $(p_w(x_w), I_w)_{w\in W} \cup \{(p_\top(x_\top), \bigcup_{w\in W} I_w)\}$ is a strongly fair system.

\smallskip

\begin{remark}
\
\begin{enumerate}
	\item If $p_\top(x_\top)$ is a minimal completion of an algebraically closed amalgamation system and $a_\top$ a realization of $p_\top$, then $a_\top= \bigcup_{w\in W} a_w$ is a union of algebraically closed sets but has, in the general case, no reason to be algebraically closed itself.
	In particular, the completed system is not necessarily algebraically closed.
	
	\item In the concrete theories we will consider in the following sections, hypotheses \ref{hyp:StrongBdd}-\ref{hyp:SameType} will be obtained as consequences of already known results, leaving \ref{hyp:ContinuityFSystems} as the core of the proof of $n$-existence.
Moreover, proofs of \ref{hyp:ContinuityFSystems} can be very different depending on the theories, which complicates the application of our main result.
\end{enumerate}
\end{remark}

\smallskip

We assume now that all these hypotheses are satisfied.
We can then transfer $n$-existence from the theory $T_0$ to the theory $T_1$, for all integers $n\ge 1$:

\begin{theorem}
\label{thm:AbstractAmalgamationTransfer}
Assume \ref{hyp:StrongBdd}-\ref{hyp:ContinuityFSystems} and let $Q$ be a subset of $N$ which is algebraically closed in $T_1$, as well as $n\ge 1$.
If $T_0$ has $n$-existence over $\acl_0(Q)$, then $T_1$ has $n$-existence over $Q$.
\end{theorem}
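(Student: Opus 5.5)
We plan to take an algebraically closed $\mathfrak{P}([n])^-$-amalgamation system $S_1=(q_w(x_w), w\in W)$ in the sense of $T_1$ over $Q$. We translate it into an annotated $T_0$-system, complete that system using $n$-existence in $T_0$ over $\acl_0(Q)$, and then pull the completion back to $T_1$ using \ref{hyp:ContinuityFSystems} and \ref{hyp:SameType}. By the remark after Definition \ref{nExist}, it suffices to treat $W=\mathfrak{P}([n])^-$.

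\textbf{Step 1: translate the $T_1$-system into an annotated $T_0$-system over $Q$.} For each $w$ we fix a realization $a_w$ of $q_w$. It is $T_1$-algebraically closed, hence lies in $\mcG$ by \ref{hyp:InteractionAlgClosure}. We set $p_w=\tp_0(a_w/Q)$ and $I_w=\qftp'(a_w)$. Compatibility of the $p_w$ and $I_w$ follows from compatibility of the $q_w$, since $\mcL_1$-types restrict to both pieces.

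We then check that $(p_w)$ is a $T_0$-amalgamation system over $Q$:
\begin{itemize}
\item Independence over $Q$ in $T_0$ follows from \ref{hyp:CaracIndep}, applied inductively to $\ind^1$-independent families of sets in $\mcG$; the unions $a_{v_1}\cdots a_{v_m}$ needed along the way lie in $\mcG$ by the same hypothesis.
\item Controlled character follows from $\acl_1(a)=\acl_0(a)\cap N$.
\end{itemize}
The same unions-in-$\mcG$ observation shows that the system is a relatively algebraically closed strongly fair system. To realize all these unions simultaneously, we need the strongly fair condition witnessed inside a single realization; we get this from the realization $a_w$ of $q_w$ itself, using that $\{a_i\}_{i\in w}$ is $\ind^1$-independent over $Q$.

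\textbf{Step 2: move the system to $\acl_0(Q)$.} By \ref{hyp:CoheirLemmaLight}, each $p_w$ has a unique extension $\tilde p_w$ to $\acl_0(Q)$. Since $\acl_0(Q)\subseteq\acl_0(a_w)$, this extension is nonforking, so Lemma \ref{ExtensionAmalgSystem} shows that $(\tilde p_w)$ is an amalgamation system over $\acl_0(Q)$.

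The system need not be algebraically closed in $T_0$. We handle this with Lemma \ref{CompletionLemma}, using two facts:
\begin{itemize}
\item \ref{hyp:StrongBdd}, together with its parameter version, gives $\acl_0(a_w)=\dcl_0(a_w\acl_0(Q))$.
\item $\acl_0(a_v)\cap a_w=a_v$ holds because $a_v=\acl_0(a_v)\cap N$ by \ref{hyp:InteractionAlgClosure}.
\end{itemize}
Lemma \ref{CompletionLemma} then embeds $(\tilde p_w)$ into an algebraically closed system over $\acl_0(Q)$. $n$-existence in $T_0$ over $\acl_0(Q)$ completes that system, and restricting gives a minimal completion $\tilde p_\top$ of $(\tilde p_w)$. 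Its restriction $p_\top$ to $Q$ is then a minimal completion of $(p_w)$.

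\textbf{Step 3: realize the completion in $N$ and pull it back.} By \ref{hyp:ContinuityFSystems}, the system completed with $(p_\top,\bigcup I_w)$ is strongly fair. So there is $a_\top\subseteq N$ realizing $p_\top\cup\bigcup_w I_w$ whose sub-unions lie in $\mcG$; in particular each $a_w$-part lies in $\mcG$. Each such part has the same $\tp_0$ and $\mcL'$-annotation as the original $a_w$, hence the same $\qftp_1$. Since the original $a_w$ is $T_1$-algebraically closed, \ref{hyp:SameType} gives $\tp_1$ equal to $q_w$. Hence $\tp_1(a_\top/Q)$ extends all the $q_w$.

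It remains to verify that $\tp_1(a_\top/Q)$ satisfies the axioms of a completion:
\begin{itemize}
\item \emph{Independence over $Q$ in $T_1$.} The $a_i$ are $\ind^0$-independent over $Q$ and their unions are in $\mcG$, so \ref{hyp:CaracIndep} applies, inductively.
\item \emph{Control.} This is inherited from the $T_0$-algebraicity of $a_\top$ over $Q\cup\{a_i\}$, intersected with $N$.
\end{itemize}

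\textbf{Main obstacle.} We expect the delicate point to be the bookkeeping in Step 2. There we pass between types over $Q$ and over $\acl_0(Q)$, and between relatively closed sets in $N$ and $\acl_0$-closures. We must ensure that the completion obtained over $\acl_0(Q)$, restricted to the original variables and to $Q$, is genuinely a minimal completion of $(p_w)$, so that \ref{hyp:ContinuityFSystems} applies. We must also check the hypotheses of Lemma \ref{CompletionLemma} precisely, which is where \ref{hyp:StrongBdd} and \ref{hyp:CoheirLemmaLight} get used.
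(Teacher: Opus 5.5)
Your proposal is correct and follows the paper's proof essentially step for step. You translate the system into $\mcL_0$-types with $\mcL'$-annotations and show it is relatively algebraically closed and strongly fair via \ref{hyp:InteractionAlgClosure} and \ref{hyp:CaracIndep}. You then move it to $\acl_0(Q)$ via \ref{hyp:CoheirLemmaLight} and Lemma \ref{ExtensionAmalgSystem}, complete it with Lemma \ref{CompletionLemma} and $n$-existence in $T_0$, and pull the completion back via \ref{hyp:ContinuityFSystems}, \ref{hyp:CaracIndep} and \ref{hyp:SameType}. You are in fact slightly more careful than the paper: you check the second hypothesis of Lemma \ref{CompletionLemma} and the controlled character of the top type, both of which the paper leaves implicit.

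One small slip: the extension of $p_w$ to $\acl_0(Q)$ is nonforking simply because $\acl_0(Q)$ is algebraic over $Q$, not because $\acl_0(Q)\subseteq\acl_0(a_w)$.
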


\begin{proof}
Let $(r_w(x_w), w\in W)$ be an algebraically closed amalgamation system over $Q$ in $T_1$. 
We may assume $W$ to be the set $\mathfrak{P}([n])^-$.
For all $w$ in $W$, let $p_w(x_w)$ be the restriction of $r_w$ to $\mcL_0$, and let $I_w$ be the restriction of $r_w(x_w)$ to the quantifier-free $\mcL'$-formulae.
Remark that any realization $b_w$ of $r_w$ is algebraically closed in $T_1$, so in particular $b_w$ belongs to $\mcG$ by  \ref{hyp:InteractionAlgClosure}.

\bigskip
\noindent
CLAIM 1: The system $(p_w(x_w), I_w)_{w\in W}$ is a relatively algebraically closed strongly fair system.

\renewcommand\qedsymbol{$\square_{\text{Claim } 1}$}
\begin{proof}
We need to show that $(p_w(x_w), w\in W)$ is an amalgamation system such that  $(p_w(x_w), I_w)_{w\in W}$ is an annotated system satisfying both the relatively algebraically closed fair system and the strongly fair system properties.
Note that the annotated system property is immediate.

\smallskip
Remark  first that if $b_w$ realizes $r_w$, then all the $b_i$'s ($i\in w$) are in $\mcG$ and that they form an independent family over $Q$ in $T_1$. 
This yields by \ref{hyp:CaracIndep} that the family $(b_i, i\in w)$ is independent over $Q$ in $T_0$; this independence is witnessed by the $\mcL_0$-type of $b_w$, i.e. by $p_w$.

Now, \ref{hyp:CaracIndep} also implies that for all $v\sseq w$, the union $\bigcup_{i\in v} b_i$ belongs to $\mcG$.
We can therefore apply \ref{hyp:InteractionAlgClosure}, which yields that $\acl_1(b_i, i\in w)$ is equal to $\acl_0(b_i, i\in w) \cap N$.
Since the system $(r_w, w\in W)$ is controlled, we get that $b_w$ is algebraic over $(b_i, i\in w)$ in $T_0$, which is witnessed by $p_w$.

\smallskip 

The fact that it is a relatively algebraically closed fair system is a consequence of \ref{hyp:InteractionAlgClosure}, as any realization $b_w$ of $r_w$ is also a relatively algebraically closed realization of $p_w$ belonging to $\mcG$.

\smallskip

Finally, to see that it is a strongly fair system, remark that if $b_w$ realizes $r_w$ and $u,v$ are subsets of $w$, then $b_u, b_v$ and $b_{u\cap v}$ belongs to $\mcG$, and $b_u\ind^1_{b_{u\cap v}} b_v$ holds. 
We can then deduce that $b_ub_v$ belongs to $\mcG$ by \ref{hyp:CaracIndep}, and the general case is obtained by induction.
\end{proof}

Now by \ref{hyp:CoheirLemmaLight}, each $p_w(x_w)$ has a unique extension $q_w(x_w)$ to $\acl_0(Q)$, which is necessarily nonforking.
It follows that $(q_w(x_w), w\in W)$ is an amalgamation system over $\acl_0(Q)$ in $T_0$ by Lemma \ref{ExtensionAmalgSystem}.
Remark that if $b_w$ realizes $r_w$, then  $\acl_0(\acl_0(Q)b_w)$ coincides with $\dcl_0(\acl_0(Q) b_w)$, which is witnessed by $p_w$.
It follows that if $a_w$ realizes $q_w$ then $\acl_0(\acl_0(Q)a_w)=\dcl_0(\acl_0(Q)a_w)$, so we can apply Lemma \ref{CompletionLemma} to obtain a corresponding algebraically closed amalgamation system $(s_w(y_w), w\in W)$ over $\acl_0(Q)$.
By choice of $Q$, we can find a completion $s_\top(z)$ of the system.

We can now consider the restriction $p_\top(x_\top)$ of $s_\top(z)$ to the parameters $Q$ and the variables $x_\top$, where $x_\top$ is the reunion of all the  $x_w$, $w\in W$.
It follows that $p_\top(x_\top)$ is a completion of the system $(p_w(x_w), w\in W)$.
We also define $I_\top= \bigcup_{w\in W} I_w$.
The hypothesis \ref{hyp:ContinuityFSystems} yields that the completed system $(p_w(x_w), I_w)_{ w\in W}\cup \{(p_\top, I_\top)\}$ is a strongly fair system: let $c_\top$ be a realization of $p_\top$ which witnesses this fact.

\bigskip
\noindent
CLAIM 2: The type $r_\top=\tp_1(c_\top/Q)$ is a completion of the system $(r_w, w\in W)$ in $T_1$.

\renewcommand\qedsymbol{$\square_{\text{Claim } 2}$}
\begin{proof}
We need to verify that the family $(c_i, i\leq n)$ is independent over $Q$ in $T_1$ and that for every $w$, the type $\tp_1(c_w/Q)$ is equal to $r_w$.

\smallskip
By choice of $c_\top$ we get that $\bigcup_{i\in w} c_i$ belongs to $\mcG$, for any $w\in W$.
Moreover we know that the family $(c_i, i\leq n)$ is independent over $Q$ in $T_0$, because $p_\top$ is a completion of an amalgamation system.
It then follows from \ref{hyp:CaracIndep} that $(c_i, i\leq n)$ is independent over $Q$ in $T_1$.

\smallskip
For the second point, let $b_w$ be a realization of $r_w$.
We know from \ref{hyp:InteractionAlgClosure} that $b_w$ is closed under $\acl_1$, and that $c_w$ belongs to $\mcG$ by choice of $c_\top$. 
Moreover $b_w$ and $c_w$ have the same $\mcL_1$-type over $Q$ by definition of $(p_w, I_w)$ and choice of $c_\top$: it follows that they have the same $\mcL_1$-type over $Q$ by \ref{hyp:SameType}.
\end{proof}
\renewcommand\qedsymbol{$\square$}
\noindent
This concludes the proof of the theorem.
\end{proof}

\subsection{Verification of the hypotheses}		\label{sec:VerifHyp}

The hypotheses \ref{hyp:StrongBdd}-\ref{hyp:ContinuityFSystems} were all used in the proof of Theorem \ref{thm:AbstractAmalgamationTransfer}, but are they also quite strong and may be therefore difficult to verify in practice.
However, there are some contexts in which they can be obtained as consequences of more natural assumptions, which is what we discuss here.

\medskip

We introduced in Section \ref{sec:RelUniq} the property (\nameref{Star}).
This property was already present in several publications as a consequence of the elimination of imaginaries (see \cite{Blossier_MartinPizarro_2019},  \cite{HrushovskiPFFieldsRelStructures}, \cite{Polkowska_2007}); we introduce it independently as a way to bypass a failure of elimination of imaginaries in the theory $T_0$.
Remark that Property (\nameref{Star}) is not a property of the theory $T_0$ alone, but rather of the pair $(M,N)$, and that it is preserved under elementary equivalence.

\medskip

The proof of the following fact in \cite{Blossier_MartinPizarro_2019} assumes gEI, but only uses it to show that Property (\nameref{Star}) holds, and is thus still valid in our context.

\begin{fact} \cite[Remarque 1.10]{Blossier_MartinPizarro_2019} \label{StrongBounded}
Assume that $T_0$ is stable and that $\acl_0(N)$ coincide with $ \dcl_0(N\acl_0(N_0))$.
If Property (\nameref{Star}) holds and every element of $\dcl_0(N)$ is $\mcL_0$-interdefinable with a tuple of elements of $N$,
then for all relatively algebraically closed set $A$ of $N$ containing $N_0$ we have that
	\[\acl_0(A)=\dcl_0(A\acl_0(N_0)).\]

\noindent
In particular, the structure $N$ is strongly bounded in $M$ after adding parameters for $N_0$.
\end{fact}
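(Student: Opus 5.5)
The plan is a Galois-theoretic argument in the monster model $M$ of $T_0$. Fix a relatively algebraically closed subset $A$ of $N$ containing $N_0$. The inclusion $\dcl_0(A\acl_0(N_0))\sseq\acl_0(A)$ is clear, since $N_0\sseq A$. For the converse, fix $e\in\acl_0(A)$. Because $e$ is algebraic over $A$, and $M$ is saturated, it suffices to show that every $\sigma\in\aut_0(M/A\acl_0(N_0))$ fixes $e$. Then $e\in\dcl_0(A\acl_0(N_0))$, since an element algebraic over a set and fixed by all automorphisms fixing that set is definable over it.

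First I will use the hypothesis on $N$. We have $e\in\acl_0(A)\sseq\acl_0(N)=\dcl_0(N\acl_0(N_0))$. So there are a finite tuple $n$ from $N$, a finite tuple $c$ from $\acl_0(N_0)$ and an $\emptyset$-definable function $f$ with $e=f(n,c)$. Note that $c\in\acl_0(N_0)\sseq\acl_0(A)$, because $N_0\sseq A$.

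The key step uses Property (\nameref{Star}). Since $n$ is a tuple in $N$ and $A$ is relatively algebraically closed in $N$, the type $\tp_0(n/A)$ is stationary. Hence it implies $\tp_0(n/\acl_0(A))$, as in Remark \ref{rk:ApplicationOfStar}. Now take $\sigma\in\aut_0(M/A\acl_0(N_0))$. Since $\sigma$ fixes $A$, we have $\sigma(n)\equiv_A n$. By the entailment, $\sigma(n)\equiv_{\acl_0(A)} n$; in particular $\sigma(n)\equiv_{ce} n$, as $c,e\in\acl_0(A)$. The formula $f(x,c)=e$ holds of $n$, so it holds of $\sigma(n)$, giving $f(\sigma(n),c)=e$. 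On the other hand, $\sigma(e)=f(\sigma(n),\sigma(c))=f(\sigma(n),c)$, because $\sigma$ fixes $\acl_0(N_0)$. Therefore $\sigma(e)=e$, which concludes the argument.

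The main point to check carefully is that Property (\nameref{Star}) really applies to the tuple $n$ over $A$. This needs $n$ to lie in $N$ and $A$ to be relatively algebraically closed in $N$, and both hold here. I expect the interdefinability hypothesis on $\dcl_0(N)$ to be needed only in the original source to derive (\nameref{Star}) (or to choose $n$ inside $N$ rather than in $\dcl_0(N)$). If $n$ were only obtained in $\dcl_0(N)$, I would first replace it by an interdefinable tuple from $N$, which does not affect the argument. The final sentence follows by taking $N_0$ as parameters: every relatively algebraically closed subset $A$ of $N$ in the expanded language contains $N_0$, and then $\acl_0(A)=\dcl_0(A\acl_0(N_0))$ is exactly \ref{hyp:StrongBdd} over $N_0$.
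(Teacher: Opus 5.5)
Your proof is correct and matches what the paper intends: the paper gives no argument of its own, only citing \cite[Remarque 1.10]{Blossier_MartinPizarro_2019} with the comment that gEI is used there solely to obtain $(\star)$, and your key step is exactly that $(\star)$ gives $\tp_0(n/A)\vdash\tp_0(n/\acl_0(A))$, which together with $\acl_0(N)=\dcl_0(N\acl_0(N_0))$ forces every automorphism fixing $A\acl_0(N_0)$ to fix $\acl_0(A)$ pointwise. You are also right that the interdefinability hypothesis is never needed (you can take $n$ in $N$ directly), and you can replace the automorphism step by compactness to avoid any homogeneity assumption on $M$: some $\phi(x)\in\tp_0(n/A)$ implies $f(x,c)=e$, so $e$ is the unique $y$ with $\exists x\,(\phi(x)\wedge f(x,c)=y)$, whence $e\in\dcl_0(Ac)\sseq\dcl_0(A\acl_0(N_0))$.
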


\smallskip

This fact allows us to deduce \ref{hyp:StrongBdd} from the stability of $T_0$, the property (\nameref{Star}) and the equality $\acl_0(N)= \dcl_0(N\acl_0(N_0))$.
Remark that \ref{hyp:CoheirLemmaLight} is also a consequence of (\nameref{Star}).
As explained above, the property (\nameref{Star}) holds for theories with geometric elimination of imaginaries:

\begin{fact}\cite[Remarque 1.7]{Blossier_MartinPizarro_2019}
Assume that $T_0$ is stable and has geometric elimination of imaginaries (gEI).
If $a$ is a tuple in $N$ and $C$ a relatively $T_0$-algebraically closed subset of $N$, then the canonical basis $\cb_0(a/C)$ is contained in $\dcl_0(C)$.
In particular, the type $\tp_0(a/C)$ is stationary, so (\nameref{Star}) holds.
\end{fact}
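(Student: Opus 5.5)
The plan is to work in $M\eq$ with $c=\cb_0(a/\acl_0(C))$, the canonical basis of the strong type of $a$ over $C$. By stability, $c$ is an imaginary in $\acl_0\eq(C)$ and $\tp_0(a/\acl_0(C))$ is the unique nonforking extension of $\tp_0(a/c)$. Suppose we can show $c\in\dcl_0\eq(C)$. Then every $\mcL_0$-automorphism fixing $C$ fixes $c$, hence permutes the nonforking extensions of $\tp_0(a/C)$ to $\acl_0(C)$ while fixing the one over $c$. This gives stationarity of $\tp_0(a/C)$, and so (\nameref{Star}). Everything therefore reduces to $c\in\dcl_0\eq(C)$.

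First I would replace $c$ by a real object using gEI. I would take a finite tuple $d$ from $M$ that is $\mcL_0$-interalgebraic with $c$ (working with each finite piece of the canonical basis separately). Since $c\in\acl_0\eq(C)$, we get $d\in\acl_0(C)$. Next I would take the finitely many $C$-conjugates $c=c_1,\dots,c_k$ of $c$. The set $\{c_1,\dots,c_k\}$ has a code $e\in\dcl_0\eq(C)$, and by gEI I would choose a real tuple $e'$ interalgebraic with $e$. The goal is to show $k=1$, i.e. that $C$ already pins down $c$.

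To bring in $N$, I would use that $a$ lies in $N$ and that $C$ is relatively algebraically closed, i.e. $\acl_0(C)\cap N=C$. The idea is to find a real tuple $d'$, algebraic over $C$ and interdefinable with $c$ over $C$, that can be taken inside $N$; then $d'\in\acl_0(C)\cap N=C$, so $c\in\dcl_0\eq(C)$. To produce such a $d'$, I would realize $c$ in $\dcl\eq$ of a finite piece $a_1,\dots,a_m$ of a Morley sequence of $\tp_0(a/\acl_0(C))$, since canonical bases of stable types lie in the definable closure of Morley sequences. By gEI I would then pass from $c$ to real elements algebraic over $a_1\dots a_m$, and I would aim to choose the $a_i$ inside $N$.

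This last step is where I expect the main obstacle. To keep the $a_i$ in $N$ I would need a relative-extension property inside $N$ (as in Property (\ref{smallhyp:RelExtension}) of Subsection \ref{sec:RelUniq}), or the fact that every element of $\dcl_0(N)$ is interdefinable with a tuple of $N$ (as in Fact \ref{StrongBounded}). I would also have to ensure that the real tuple obtained from gEI lands in $N$ and not merely in $\acl_0(N)$. I would first try to follow the argument of \cite[Remarque 1.7]{Blossier_MartinPizarro_2019} for this step, checking which structural property of the pair $(M,N)$ it silently uses.
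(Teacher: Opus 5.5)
The paper gives no proof of this Fact; it only quotes \cite[Remarque 1.7]{Blossier_MartinPizarro_2019}. Your proposal has a genuine gap: it stops at the decisive step, and the route you sketch cannot be completed.

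Your reduction to $c\in\dcl_0^{eq}(C)$ is correct, provided $c$ is the canonical basis of $\tp_0(a/\acl_0^{eq}(C))$. Under gEI the real type $\tp_0(a/\acl_0(C))$ need not be stationary, so you cannot work over $\acl_0(C)$. The Morley-sequence detour, and with it any relative extension property of $N$, is unnecessary. An automorphism fixing $Ca$ pointwise fixes $\acl_0^{eq}(C)$ setwise. Hence it fixes $\tp_0(a/\acl_0^{eq}(C))$ and its unique global nonforking extension. So $c\in\dcl_0^{eq}(Ca)\cap\acl_0^{eq}(C)$ directly, with $Ca\subseteq N$.

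The real problem is the next step: producing a real tuple of $N$ that is \emph{interdefinable} with $c$ over $C$. Geometric elimination only gives a real $d$ \emph{interalgebraic} with $c$. Such a $d$ lies in $\acl_0(Ca)$ rather than $\dcl_0(Ca)$, so nothing places it in $N$. Even $d\in C$ would only give back $c\in\acl_0^{eq}(C)$, which you already had. Coding the conjugates $c_1,\dots,c_k$ by $e$ and replacing $e$ by an interalgebraic real $e'$ loses the same information. No step of your plan forces $k=1$.

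This cannot be repaired from gEI alone, because the statement as literally worded fails. Let $T_0$ be the theory of an equivalence relation $E$ with exactly two classes, both infinite. It is $\omega$-stable with QE and has gEI: each class is interalgebraic with the empty tuple, and naming the two classes leaves two disjoint pure sets, which weakly eliminate imaginaries. Take $N=M$ and $C=\emptyset$, which is relatively algebraically closed since $\acl_0(\emptyset)=\emptyset$. For any element $a$, the canonical basis of its strong type over $\emptyset$ is $a/E$. This lies in $\acl_0^{eq}(\emptyset)\setminus\dcl_0^{eq}(\emptyset)$, and $\tp_0(a/\emptyset)$ is not stationary.

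What the argument needs is a coding hypothesis, not an extension property. Suppose $c$ is interdefinable with a real tuple $d$, for instance because $T_0$ has full elimination of imaginaries. Then $d\in\dcl_0(Ca)\cap\acl_0(C)$. Suppose moreover that every element of $\dcl_0(N)$ is interdefinable with a tuple of $N$; this is the condition you suspected, and it appears in Fact \ref{StrongBounded}. Then $d$ is interdefinable with some $d'\in\acl_0(C)\cap N=C$, and $c\in\dcl_0^{eq}(C)$ follows.
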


\smallskip

The hypotheses \ref{hyp:InteractionAlgClosure} - \ref{hyp:SameType} will, in the concrete theories we will study in the next sections, be obtained from already known results on these theories.

\medskip

Lastly, in the next sections, the verification of \ref{hyp:ContinuityFSystems} will be simplified by using the following strategy:
\begin{itemize}
	\item[$(*)$] First, we will remark that if $p_\top$ is a minimal completion of a strongly fair system $(p_w, I_w)_{w\in W}$ and $a_\top$ is a realization of $p_\top\cup I_\top$ which belongs to $\mcG$, then for all strict subsets $w_1,..w_m$ of $[n]$, the  subtuple $a_{w_1}...a_{w_m}$ also belongs to $\mcG$.
	
	\item[$(**)$] Second, we will check that if $p_\top$ is a minimal completion of a strongly fair system $(p_w, I_w)_{w\in W}$, then the pair $(p_\top, I_\top)$ is fair, i.e. that the property "being fair" is preserved under amalgamation (after eventually adding some assumptions contained in the fair system properties).
\end{itemize}

\begin{remark}
\label{rk:ContinuityNonforking}
It is useful to remark that step $(*)$ is immediate whenever $\mcG$ is the class of all $\mcL'$-substructure of $N$, since in that case $\mcG$ is closed under taking an $\mcL'$-substructure.

\medskip

Maybe more interestingly, let us show that step $(**)$ is fulfilled in the following situation.
Fix a subset $C\sseq B$ of $N$, and assume that an annotated type $(p(x), I)$  over $B$ is fair if and only if $p$ does not fork over $C$ (in particular, "being fair" does not depend on the annotation $I$, so we shall omit it).
Consider an $n$-amalgamation system $(p_w, w\in W)$ over $B$ such that none of the $p_w$'s fork over $C$ and a completion $p_\top$ of the system. 
We show that show that $p_\top$ does not fork over $C$.

Let $a_\top$ be a realization of $p_\top$: we want to show that $a_\top$ is independent from $B$ over $C$.
We know that for any $w$ in $W$, the independence $a_w\ind_C B$ holds, which yields in particular the independence $(a_i, i\in w) \ind_C B$.
Moreover, the family $(a_i, i\leq n)$ is independent over $B$, so these two facts imply the independence $a_1...a_n \ind_C B$.
Finally, we can remark that $a_\top$ is algebraic over $a_1...a_n$, which concludes the proof.

Note that this proof shows more generally that the property "being nonforking over $C$" is preserved under amalgamation.
\end{remark}

\smallskip

To conclude this subsection, recall that we assumed the simplicity of $T_1$ and that $\ind^1$ is the nonforking independence relation for $T_1$.
However we never used the independence theorem of theory $T_1$, therefore the proofs still hold in a broader context, by simply assuming that $\ind^1$ is a ternary relation defined in $T_1$ with the invariance, monotonicity, base monotonicity, transitivity and closure properties. 

In that case, our results yield in particular that $\ind^1$ satisfies the independence theorem. 
It follows that Theorem \ref{thm:AbstractAmalgamationTransfer} could be used to prove simplicity of the theory $T_1$ even though we will, in the examples below, only apply it to theories already known to be simple. 

\smallskip

\section{Bounded PAC substructures of stable theories} \label{sec:BddPAC}
\subsection{The general result}
\label{sec:PAC}

In this section, we first apply Theorem \ref{thm:AbstractAmalgamationTransfer} to obtain a general result of transfer of $n$-existence for bounded PAC substructures of stable structures (Corollary \ref{cor:nExistencePAC}).
We then particularize this result to specific stable theories, namely that of differentially closed fields of characteristic $0$ and separably closed fields in all characteristics.

Hrushovski proves in \cite{HrushovskiPFFieldsRelStructures} that, given a model $M$ of a strongly minimal theory $T_0$ and a bounded PAC substructure $P$ of $M$ such that $M=\acl_0(P)$, the theory of the pair $(M, P)$ has generalized existence over algebraically closed sets for the independence relation $\ind^0$ (after fixing parameters for an elementary substructure).
Note that generalized existence for the theory of $P$ is not immediately equivalent to this result, but can still be deduced from it.
One must first obtain generalized existence in $P$ for the trace of $\ind^0$ in $P$ (which is a consequence of the fact that $P$ is bounded in $\acl_0(P)$), and then show that the trace of $\ind^0$ coincide with the nonforking independence in $P$ (for example using the Kim-Pillay theorem).

The goal of this subsection is to adapt Hrushovski's proof to the case where $M$ is assumed to be stable but not necessarily strongly minimal. 
To do so, we use the framework developed by Pillay and Polkowska in \cite{Pillay_Polkowska_2006} and by Polkowska in \cite{Polkowska_2007} and we prove that if $P$ is bounded and PAC in $M$ stable, then $P$ has $n$-existence over elementary substructures for all $n$.

Fix a stable theory $T_0$ and a sufficiently saturated model $M$ of $T_0$. 
We start by recalling the definitions and results of \cite{Pillay_Polkowska_2006} and \cite{Polkowska_2007}.

\begin{definition} \label{defBounded} \cite[Definition 2.3 and Proposition 2.5]{Pillay_Polkowska_2006}
Let $P$ be a definably closed substructure of $M$.
We say that $P$ is \textit{bounded in} $M$ if it satisfies one of the following equivalent propositions:
\begin{enumerate}
	\item There exists a cardinal $\lambda$ such that for all $(M', P')$ elementarily equivalent to $(M, P)$, the automorphism group $\aut_0(\acl_0(P')/P')$ has cardinality smaller than $\lambda$.

	\item For all pairs $(M', P')$ elementarily equivalent to $(M, P)$, the automorphism groups $\aut_0(\acl_0(P')/P')$ and $\aut_0(\acl_0(P)/P)$ are isomorphic.

	\item For any elementary extension $(M_0,P_0) \preccurlyeq (M_1,P_1)$ of models of $\text{Th}(M,P)$, we have $\acl_0(P_1)= \dcl_0(\acl(_0P_0) P_1)$.
\end{enumerate}

\end{definition}

The group $\aut_0(\acl_0(P)/P)$ is called the absolute Galois group of $P$ and will be denoted $\text{Gal}(P)$ when there is no risk of confusion regarding the ambient theory.

\smallskip

\begin{definition}  \label{defPAC} \cite[Definition 3.1]{Pillay_Polkowska_2006}
Let $P$ be a substructure of $M$ and $\kappa > |T_0|$ a cardinal. 
We say that $P$ is a $\kappa$\textit{-PAC substructure of} $M$ if every stationary $\mcL_0$-type over a subset $A$ of $P$ of size $|A|<\kappa$ is realized in $P$.
We simply say that $P$ is \textit{PAC} if it is $|T_0|^+$-PAC.
\end{definition}

We recall that this definition is a generalization of the one given by Hrushovski in \cite{HrushovskiPFFieldsRelStructures} in the case of strongly minimal theory.
Given $M$ a strongly minimal structure and a substructure $P$ of $M$, the relation between these notions is as follows: 
if $P$ is $\kappa$-PAC in $M$ in the sense of Definition \ref{defPAC} then $P$ is PAC in $M$ in the sense of Hrushovski.
Conversely, if $P$ is PAC in $M$ in the sense of Hrushovski and $\kappa$-saturated, then $P$ is $\kappa$-PAC in the sense of Definition \ref{defPAC}.

Moreover, Hrushovski showed that a field is perfect and pseudo-algebraically closed (in the algebraic sense) if and only if it is a PAC substructure (in his sense) of its field-theoretic algebraic closure. 
It follows that a field is perfect and pseudo-algebraically closed if and only if it has an elementary extension which is PAC (in the sense of Definition \ref{defPAC}) in an algebraically closed field.

In what follows, we will talk about "pseudo-algebraically closed fields" to refer to the algebraic definition and about "PAC substructures" to refer to Definition \ref{defPAC} in order to avoid any confusion.
In \cite{Hoffmann_Kowalski_2023}, the authors proved that saturated  pseudo-algebraically closed fields correspond to PAC substructures of separably closed fields (see Fact \ref{FirstOrderSCF}).

\begin{definition} \cite[Definition 3.3]{Pillay_Polkowska_2006} \label{def:PACFO}
Let $\mcL_0$ be a language and set $\mcL_P = \mcL_0 \cup \{P\}$, where $P$ is a new unary predicate.
Given a $\mcL_0$-theory $T_0$, we say that the \textit{PAC-property is first order} in $T_0$ (PACFO) if there is a set $\Sigma$ of $\mcL_P$ sentences such that:
\begin{enumerate}
	\item If $P$ is a PAC-substructure of a model $M$ of $T_0$, then $(M, P)\models \Sigma$.
	\item If $(M,P)$ is a $\kappa$-saturated model of $T_0\cup \Sigma$, then $P$ is a $\kappa$-PAC substructure of $M$.
\end{enumerate}
In that case, we will say that a structure (resp. a theory) is $T_0$-PAC if it has an elementary extension (resp. a model) which is PAC in a model of $T_0$.
\end{definition}

\smallskip

In all that follows, we assume that the stable theory $T_0$ has quantifier elimination and PACFO, and that the types in $T_0$ over algebraically closed sets are stationary (TACSS).

Fix a bounded PAC substructure $P$ of a model $M$ of $T_0$ such that  Property (\nameref{Star}) holds for $(M,P)$ and write $T_1=\text{Th}_{\mcL_0}(P)$.
Let $\kappa > |T_0|$ be a cardinal; by PACFO we may assume that both $M$ and $P$ are $\kappa$-saturated.
Assume finally that $\mcL_0$ contains parameters for an elementary substructure of $P$.
We aim to apply the results of Section \ref{sec:GeneralizedAmalgamation} to the theories $T_0$ and $T_1$ (remark that here, the languages $\mcL_0$ and $\mcL_1$ are equal).
The following fact summarize some results obtained in \cite{Polkowska_2007} in this context.

\smallskip

\begin{fact} \label{fact:SummaryPAC}

\begin{enumerate}	
	\item \cite[Lemma 3.8]{Polkowska_2007}
	If $A$ is a relatively $T_0$-algebraically closed subset of $P$, then $\acl_0(A)$ and $\dcl_0(\acl_0(\emptyset)A)$ coincide.

	\item  \cite[Proposition 3.11]{Polkowska_2007} 
	If two subsets of $P$ have the same type in $T_0$, then one is relatively algebraically closed if and only if so is the other. 
	
	\item \cite[Proposition 3.12]{Polkowska_2007}
	If $A$ is relatively algebraically closed in $P$, then we have that $\text{qftp}_1(A) \vdash \tp_1(A)$.
	It follows that if $ B\sseq A$ are relatively algebraically closed, then $\text{qftp}_1(A/B) \vdash \text{tp}_1(A/B)$.
	
	\item \cite[Corollary 3.15]{Polkowska_2007} 
	If $A$ is contained in $P$, then $\acl_1(A)$ equals $\acl_0(A)\cap P$.
\end{enumerate}
\end{fact}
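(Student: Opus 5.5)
These four statements are results of \cite{Polkowska_2007}; were I to reprove them in the present setting, I would replace elimination of imaginaries by Property (\nameref{Star}) and by stationarity of types over algebraically closed sets. The order would be (1), (2), (3), and then (4), with (4) deduced from (3).

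For (1), let $A$ be relatively algebraically closed in $P$. The first step is to show that the restriction map $\text{Gal}(P)\to\aut_0(\acl_0(A)/A)$ is surjective. An element $\sigma$ of the target is an elementary map of $\acl_0(A)$ over $A$. Since $\tp_0(P/A)$ is stationary by (\nameref{Star}), extending $\sigma$ to $\acl_0(P)$ by the identity on $P$ should be legitimate. I would check this through the independence $P\ind^0_A \acl_0(A)$, which holds because $\acl_0(A)\cap P=A$ and the type is stationary.

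Next, suppose some automorphism of $\acl_0(A)$ fixes $A\acl_0(\emptyset)$ but is not the identity. Using PAC and saturation, I would build an elementary extension $(M_1,P_1)$ of $(M,P)$ with $P_1\ind^0_{P}$ arbitrarily many independent copies of such configurations. This should produce unboundedly many distinct restrictions, which contradicts characterisation (3) of boundedness in Definition \ref{defBounded}, namely $\acl_0(P_1)=\dcl_0(\acl_0(P)P_1)$. Item (2) then follows: relative algebraic closedness of $A$ is equivalent to surjectivity of $\text{Gal}(P)\to\text{Gal}(A)$. By (1), and since $\acl_0(\emptyset)$ lies in the definable closure of the named elementary substructure, this surjectivity is expressed by $\tp_0(A)$.

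For (3), I would run a back-and-forth between relatively algebraically closed subsets of the saturated structure $P$. Let $f:A\to B$ preserve quantifier-free type, hence $\tp_0$ by QE. Given $c\in P$, put $A'=\acl_0(Ac)\cap P$. By (1), (2) and (\nameref{Star}), $\tp_0(A'/A)$ is stationary. Extend $f$ to an $\mcL_0$-elementary map on $\acl_0(A)$ and transport the type; then use PAC to realise the transported type in $P$ over $B$. The resulting $B'$ must also be relatively algebraically closed with the correct Galois data, and here (2) is used.

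This step is the main obstacle. The Galois-theoretic extension of $f$ to $\acl_0(A')$ has to be compatible with the restriction maps from $\text{Gal}(P)$, which is an embedding problem. I would solve it by the Frattini-type argument of \cite{Polkowska_2007}, using that boundedness makes $\text{Gal}(P)$ small and that, by saturation, any two compatible finite quotients are realised.

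For (4), the inclusion $\acl_0(A)\cap P\sseq\acl_1(A)$ is immediate. Conversely, let $c\in P\setminus\acl_0(A)$ and set $A'=\acl_0(A)\cap P$. The type $\tp_0(c/A')$ is stationary and non-algebraic. Using PAC I would realise in $P$ infinitely many $\ind^0$-independent copies $c_i$ of it, each with $\acl_0(A'c_i)\cap P$ having the same quantifier-free type over $A'$. By (3), all the $c_i$ then have the same $\mcL_1$-type over $A$, so $c\notin\acl_1(A)$.
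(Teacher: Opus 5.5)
The paper gives no proof of its own. It cites \cite{Polkowska_2007} and only notes that elimination of imaginaries is used there solely to obtain (\nameref{Star}) and stationarity over algebraically closed sets, so rederiving the four items from (\nameref{Star}) is the right idea. However, your argument for (2) rests on a false claim.

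\textbf{The gap in (2).} It is not true that $\acl_0(\emptyset)=\acl_0(P_0)\subseteq\dcl_0(P_0)$. For a pseudo-finite field $P$ of characteristic $0$ one has $\acl_0(P_0)=P_0^{alg}\neq P_0=\dcl_0(P_0)$. If the inclusion held, (1) would collapse to $\acl_0(A)=\dcl_0(A)$ for every relatively algebraically closed $A$. Without it, your sentence ``this surjectivity is expressed by $\tp_0(A)$'' is just a restatement of (2).

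The missing ingredient is that restriction onto $G_0:=\aut_0(\acl_0(\emptyset)/\dcl_0(\emptyset))$ is \emph{surjective}, which again comes from (\nameref{Star}). Here is how it closes the argument.
\begin{enumerate}
\item Let $B\equiv_0 A$ with $A$ relatively algebraically closed, and put $B'=\acl_0(B)\cap P$.
\item Since $P$ is definably closed, $A$ is definably closed, and hence so is $B$.
\item Extend $A\to B$ to an elementary map $\acl_0(A)\to\acl_0(B)$; it maps $\acl_0(\emptyset)$ onto itself and so transports (1). Hence restriction $\aut_0(\acl_0(B)/B)\to G_0$ is injective.
\item $P_0$ is relatively algebraically closed: by QE, $\acl_0(P_0)\cap P\subseteq\acl_1(P_0)=P_0$.
\item So $\tp_0(B'/P_0)$ is stationary by (\nameref{Star}). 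Extending $\tau\cup\mathrm{id}_{B'}$ for each $\tau\in G_0$ shows that the subgroup $\aut_0(\acl_0(B)/B')$ already maps onto $G_0$.
\item Hence the two groups coincide, and $B'\subseteq\dcl_0(B)=B$.
\end{enumerate}

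\textbf{The other items} need adjusting, but the difficulty does not lie there.

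In (1), the extension $(M_1,P_1)$ is a detour. Boundedness for $(M,P)\preceq(M_1,P_1)$ only says that $\text{Gal}(P_1)$ injects into $\text{Gal}(P)$. Turning independent copies into more than $|\text{Gal}(P)|$ automorphisms would require combining and lifting over unions that need not be relatively algebraically closed. Instead, use the surjectivity you just proved. Its real content is $\tp_0(P/A)\vdash\tp_0(P/\acl_0(A))$; the independence $P\ind^0_A\acl_0(A)$ is trivial. Lift the nontrivial $\sigma$ to some $\tilde\sigma\in\text{Gal}(P)$ fixing $P\acl_0(\emptyset)$. Then apply boundedness (3) to $(M_0,P_0)\preceq(M,P)$: since $\acl_0(P)=\dcl_0(P\acl_0(P_0))$, $\tilde\sigma$ is the identity, a contradiction. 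This is the argument behind Fact \ref{StrongBounded}.

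In (3), no embedding problem arises. The type $f(\tp_0(A'/A))$ is stationary. Any realization $B''$ in $P$ (saturation of the pair handles infinite tuples) makes $f\cup(A'\mapsto B'')$ an $\mcL_0$-elementary map, and $B''$ is relatively algebraically closed by (2). That is all the back-and-forth needs, since Galois compatibility is already encoded in (1) and (2). No Frattini-type argument is required.

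In (4), realize independent copies of the whole set $C=\acl_0(A'c)\cap P$ over $A'$, not just of $c$; nonforking extensions of a stationary type remain stationary. Then use (2) to see that the copies are relatively algebraically closed before invoking (3).
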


Define the relation $\ind^1$ in $P$ as follows:
for all relatively algebraically closed $A,B$ and $C$ with $C\sseq A\cap B$, we have
\[ A\ind^1_C B \Leftrightarrow A\ind^0_C B.\]
Thus, the independence $\ind^1$ coincides with $\ind^0$ over relatively algebraically closed subsets.
Polkowska showed that $T_1$ is simple and that $\ind^1$ is the nonforking independence in $T_1$ (\cite[Proposition 3.19]{Polkowska_2007}).
Remark that the original article assumes full elimination of imaginaries of $T_0$. However, this is only used to show Property (\nameref{Star}) and TACSS, so the result still holds in our context.

\bigskip
The verifications of \ref{hyp:StrongBdd}-\ref{hyp:SameType} are then easy consequences of the definitions of the classes and Fact \ref{fact:SummaryPAC} (by taking $\mcG$ to be the class of all small subsets of $P$), so it only remains to prove that \ref{hyp:ContinuityFSystems} holds.

Here, a fair type is, by definition, a type in $T_0$ with parameters in $P$ and a realization in $P$, so all the stationary types with parameters in $P$ are fair by the PAC-property.
Moreover Property (\nameref{Star}) yields the converse for types over a relatively algebraically closed subset of $P$.

Remark that $\mcG$ is closed under taking subsets, whence step $(*)$ of Subsection \ref{sec:VerifHyp} holds.
Now if $(p_w, w\in W)$ is a relatively algebraically closed fair amalgamation system in $T_0$ over an existentially closed subset $Q$ of $P$, then all $p_w$'s are stationary.
Note that the Properties (\ref{smallhyp:RelExtension}), (\ref{smallhyp:RelClosure}) and (\ref{smallhyp:StrongBdd}) of Section \ref{sec:RelUniq} are all satisfied by Fact \ref{fact:SummaryPAC}.
It follows by Proposition \ref{prop:RelNUniq} that a minimal completion of $(p_w, w\in W)$ is stationary and hence fair.
This shows that step $(**)$ of Subsection \ref{sec:VerifHyp} holds, therefore \ref{hyp:ContinuityFSystems} is satisfied over existentially closed sets.

\noindent
Finally, we can apply Theorem \ref{thm:AbstractAmalgamationTransfer} to obtain:

\begin{cor} \label{cor:nExistencePAC}
Let $T_0$ be a stable theory with QE, PACFO and TACSS and let $T_1$ be the theory of a bounded PAC substructure of a model of $T_0$. 
Assume that Property (\nameref{Star}) holds.
If $Q$ is an existentially closed subset of a model of $T_1$ containing an elementary substructure such that $T_0$ has $n$-existence over $\acl_0(Q)$, then $T_1$ has $n$-existence over $Q$.
\hfill \qedsymbol
\end{cor}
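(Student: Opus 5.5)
The plan is to apply Theorem \ref{thm:AbstractAmalgamationTransfer} with $\mcL_1=\mcL_0$ (so $\mcL'$ is empty and annotations are trivial), $N=P$, and $\mcG$ the class of all small subsets of $P$. After naming the elementary substructure of $P$ contained in $Q$, the whole argument reduces to checking hypotheses \ref{hyp:StrongBdd}--\ref{hyp:ContinuityFSystems} in this setting. Working with a $\kappa$-saturated pair is harmless: by PACFO we may pass to a $\kappa$-saturated $(M,P)$ in which $P$ is still $\kappa$-PAC, and Property (\nameref{Star}) is preserved under elementary equivalence.

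The first five hypotheses are read off from Fact \ref{fact:SummaryPAC}.
\begin{itemize}
\item \ref{hyp:StrongBdd} is item (1).
\item \ref{hyp:CoheirLemmaLight} follows from Property (\nameref{Star}): for $C$ relatively algebraically closed, $\tp_0(a/C)$ is stationary, so it has a unique extension to $\acl_0(C)$.
\item \ref{hyp:InteractionAlgClosure} is item (4), since $\mcG$ contains every subset.
\item \ref{hyp:CaracIndep} holds by the definition of $\ind^1$ together with Polkowska's theorem that $\ind^1$ is nonforking in $T_1$. The clause ``$AB\in\mcG$'' is automatic. One small point is that $\ind^1$ was defined only for relatively algebraically closed sets; for arbitrary $A,B,C\in\mcG$ I would pass to relative algebraic closures using \textsc{Closure} in both theories, together with the fact that $\acl_1=\acl_0\cap P$.
\item \ref{hyp:SameType} is item (3), with quantifier elimination in $T_0$ identifying $\qftp_1$ with $\tp_0$.
\end{itemize}

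The real work is \ref{hyp:ContinuityFSystems}, which I would verify via the strategy $(*)$/$(**)$ of Subsection \ref{sec:VerifHyp}. Step $(*)$ is immediate because $\mcG$ is closed under subsets. For $(**)$, take a relatively algebraically closed strongly fair system $(p_w)_{w\in W}$ over $Q$. Each $p_w$ has a realization in $P$ that is relatively algebraically closed, and Properties (\ref{smallhyp:RelExtension}), (\ref{smallhyp:RelClosure}) and (\ref{smallhyp:StrongBdd}) of Subsection \ref{sec:RelUniq} hold for $P$:
\begin{itemize}
\item relative extension holds by $\ind^1$-extension in the simple theory $T_1$, transported via the characterisation of $\ind^1$;
\item relative algebraic closure is item (2);
\item strong boundedness is item (1).
\end{itemize}
Since $Q$ is existentially closed, Proposition \ref{prop:RelNUniq} then shows that any minimal completion $p_\top$ is unique and stationary. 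Its parameters $Q$ lie in $P$, and by the PAC property every stationary type over a small subset of $P$ is realized in $P$. So $p_\top$ is fair, the completed system is strongly fair, and \ref{hyp:ContinuityFSystems} holds.

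Theorem \ref{thm:AbstractAmalgamationTransfer} now gives $n$-existence over $Q$, because $Q$ is $T_1$-algebraically closed. Existential closedness combined with containing an elementary substructure should give this: $\acl_1(Q)$ adds nothing new, since an algebraic formula over $Q$ realized in $P$ has all its finitely many solutions witnessed in a way that transfers to $Q$ by existential closedness. If this step fails, the fallback is to check that in the proof of the theorem only $\acl_1$-closedness of the realizations $b_w$ is used, and those contain $\acl_1(Q)$.

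I expect the main obstacle to be the applicability of Proposition \ref{prop:RelNUniq}. It requires the system to be algebraically closed relative to $P$ over an existentially closed base. The delicate part is confirming that strong fairness, rather than mere pointwise realizability, supplies the realizations in $P$ needed for the relative-extension manoeuvre in that proof.
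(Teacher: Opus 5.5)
Your proposal is correct and follows the paper's argument: take $\mcG$ to be all small subsets of $P$, read off \ref{hyp:StrongBdd}--\ref{hyp:SameType} from Fact \ref{fact:SummaryPAC} and (\nameref{Star}), and obtain \ref{hyp:ContinuityFSystems} via $(*)$/$(**)$, using Proposition \ref{prop:RelNUniq} together with the PAC property. Your extra checks fill in points the paper leaves implicit: that an existentially closed $Q$ is relatively algebraically closed and hence $\acl_1$-closed, and that relative extension follows from extension in $T_1$ (or directly from (\nameref{Star}) plus PAC). Your worry about strong fairness is unnecessary, since Proposition \ref{prop:RelNUniq} only needs each $p_w$ to have some relatively algebraically closed realization in $P$.
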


\medskip

To conclude this subsection, we observe that if the theory $T_0$ is $\omega$-stable, then Corollary \ref{cor:nExistencePAC} can be somewhat simplified.
In this situation, we get that every type has a well-defined Morley rank, as well as finite multiplicity (which is equal to the Morley degree).

\begin{fact} \label{ElemSubsttr}
\cite[Corollary 3.10]{Pillay_Polkowska_2006} Let $T_0$ be stable with elimination of imaginaries such that every finitary type has finite multiplicity.
The algebraic closure of any $\kappa$-PAC substructure of a model $M$ of $T_0$ is an elementary substructure of $M$ (and it is $\kappa$-saturated).
\end{fact}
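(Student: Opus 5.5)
We verify the Tarski--Vaught test for $A=\acl_0(P)$ inside $M$, and then rerun the same argument with small partial types to get $\kappa$-saturation. The key idea, after Hrushovski, is to turn an arbitrary type over a small subset of $P$ into a stationary one by coding all its nonforking extensions at once; the PAC property then realizes that stationary type in $P$.

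Fix a formula $\varphi(x,a)$ with $a\in A$ and $x$ finite, and assume it is realized in $M$. Choose a finite tuple $c$ in $P$ with $a\in\acl_0(c)$, and pick $b\models\varphi(x,a)$. The type $p_1=\tp_0(b/\acl_0(c))$ is stationary and contains $\varphi(x,a)$. Since $\tp_0(b/c)$ is finitary, it has finite multiplicity $m$. Its nonforking extensions to $\acl_0(c)$ are $p_1,\dots,p_m$, and they are permuted by $\aut_0(M/c)$. Choose realizations $b_i\models p_i$ that are independent over $\acl_0(c)$. By elimination of imaginaries, let $d$ be a real tuple coding the finite set $\{b_1,\dots,b_m\}$; then each $b_i\in\acl_0(d)$.

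The main step is to show that $\tp_0(d/c)$ is stationary. Suppose $d'\equiv_c d$ with $d'\ind^0_c\acl_0(c)$, and let $d'$ code $\{b_1',\dots,b_m'\}$. By the choice of $d$, we may take $\sigma\in\aut_0(M/c)$ sending $d$ to $d'$. Then $\sigma$ permutes $\{p_1,\dots,p_m\}$, so the $b'_i$ realize each $p_j$ exactly once and are independent over $\acl_0(c)$. By stationarity of each $p_j$ and independence, the tuple of the $b'_i$, reindexed so that $b'_i\models p_i$, has the same type over $\acl_0(c)$ as $(b_1,\dots,b_m)$. Hence the unordered set, and so its code, has the same type over $\acl_0(c)$: $d'\equiv_{\acl_0(c)} d$. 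Here I expect the real difficulty: making this "independent realizations of all conjugate extensions" argument precise, in particular checking that the type over $\acl_0(c)$ of the coded set is uniquely determined, and that independence of the $b_i$ can be arranged simultaneously with $d\ind^0_c \acl_0(c)$.

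Granting stationarity, the $\kappa$-PAC property yields a realization $d''\in P$ of $\tp_0(d/c)$. Stationarity gives $d''\equiv_{\acl_0(c)} d$. Therefore some element $b''$ of the set coded by $d''$ satisfies $\varphi(x,a)$, and $b''\in\acl_0(d'')\sseq A$. By Tarski--Vaught, $A\preccurlyeq M$.

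For $\kappa$-saturation, let $\pi(x)$ be a finitary type over a set $B\sseq A$ with $|B|<\kappa$. Choose $C\sseq P$ of size $<\kappa$ with $B\sseq\acl_0(C)$. We repeat the argument with a realization $b$ of $\pi$ and $\tp_0(b/C)$; its multiplicity is still finite by hypothesis. The resulting code $d$ then realizes a stationary type over $C$, so this type is realized in $P$, and as before this produces a realization of $\pi$ in $A$.
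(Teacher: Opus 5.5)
Your proof is correct. The paper gives no proof of this Fact, since it is quoted from Pillay--Polkowska, and your argument is the standard one behind it. By finite multiplicity and EI, independent realizations of all the completions $p_1,\dots,p_m$ of $\tp_0(b/c)$ over $\acl_0(c)$ can be coded by a real tuple $d$ whose type over $c$ is stationary. The PAC property then places a conjugate of $d$ over $\acl_0(c)$ inside $P$, and hence a realization of $p_1$ inside $\acl_0(P)$.

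The step you flag as the real difficulty is already handled by what you wrote:
\begin{itemize}
\item $d'\ind^0_c\acl_0(c)$ holds for every $d'$, so there is nothing to arrange.
\item Stationarity of $\tp_0(d/c)$ just means $d'\equiv_c d\Rightarrow d'\equiv_{\acl_0(c)}d$, because types over $\acl_0(c)$ are stationary under EI. Your automorphism argument gives exactly this.
\item The type over $\acl_0(c)$ of an independent tuple $(e_1,\dots,e_m)$ with $e_j\models p_j$ is determined by induction on $j$: $\tp_0(e_{j+1}/\acl_0(c)e_1\dots e_j)$ must be the unique nonforking extension of the stationary type $p_{j+1}$.
\end{itemize}

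In the saturation clause you are, harmlessly, using two things. First, $M$ is sufficiently saturated, as in the paper's setting, so that $\pi$ is realized in $M$. Second, the multiplicity hypothesis is applied to a type over the possibly infinite set $C$, which the hypothesis on all finitary types allows.
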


Moreover, it was remarked in \cite[Proposition 4.2]{Hoffmann_Kowalski_2023} that if $T_0$ has the definable multiplicity property, then we can express whether a type is stationary, which implies that the PAC property is first-order.

\begin{cor}
Let $T_0$ be an $\omega$-stable theory with QE, EI and the definable multiplicity property.
If $T_1$ is bounded and PAC in $T_0$, then $T_1$ has $n$-existence over elementary substructures.
\hfill \qedsymbol
\end{cor}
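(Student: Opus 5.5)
The plan is to reduce this corollary to Corollary \ref{cor:nExistencePAC}, taking $Q$ to be an elementary substructure of a model of $T_1$. For that I need four things. First, the standing hypotheses on $T_0$: QE, PACFO, TACSS and Property (\nameref{Star}). Second, that $Q$ is existentially closed. Third, that $Q$ contains an elementary substructure. Fourth, that $T_0$ has $n$-existence over $\acl_0(Q)$ for every $n$.

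\textbf{Hypotheses on $T_0$.} QE is assumed outright. PACFO follows from the definable multiplicity property by \cite[Proposition 4.2]{Hoffmann_Kowalski_2023}. TACSS follows from EI in a stable theory, since stable theories with EI have $2$-uniqueness over algebraically closed sets, as noted after Definition \ref{nExist}. Property (\nameref{Star}) follows from EI, hence geometric EI: by the fact quoted from \cite[Remarque 1.7]{Blossier_MartinPizarro_2019}, for $C$ relatively algebraically closed in $P$, $\cb_0(a/C)\sseq \dcl_0(C)$, so $\tp_0(a/C)$ is stationary.

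\textbf{Conditions on $Q$.} Let $Q\preccurlyeq P$ with $P$ a saturated bounded PAC substructure. Then $Q$ contains an elementary substructure, namely itself, so we may name it as parameters, as in the setting of Subsection \ref{sec:PAC}. Moreover $Q$ is existentially closed in $P$ as an $\mcL_0$-substructure, because elementary substructures are existentially closed. Here $\mcL_1=\mcL_0$ and $T_1=\text{Th}(P)$.

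\textbf{$n$-existence of $T_0$ over $\acl_0(Q)$.} This is the only step with real content. I would aim to show that $\acl_0(Q)$ is an elementary substructure of $M$, and then apply Corollary \ref{cor:nUniqu}, which gives $n$-existence over elementary substructures for stable theories with TACSS.

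For this, $\omega$-stability gives finite multiplicity of finitary types, so Fact \ref{ElemSubsttr} applies to $\kappa$-PAC substructures. The obstacle is that $Q$ itself need not be $\kappa$-PAC. The first route is to note that PACFO makes the PAC property part of the elementary theory of the pair. Passing to a saturated elementary extension of $(M,P)$, and using $Q\preccurlyeq P$, I would try to argue that $Q$ is a model of the theory $\Sigma$. Then some $\kappa$-saturated elementary extension of $(\acl_0(Q),Q)$ has a PAC second sort, and Fact \ref{ElemSubsttr} transfers back to give $\acl_0(Q)\models T_0$. By QE, $\acl_0(Q)$ is then an elementary substructure of $M$.

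If realizing $Q$ as the PAC part of a pair proves awkward, the fallback is a direct check. Any consistent $\mcL_0(\acl_0(Q))$-formula has a realization of finite Morley rank and degree. Its type over the algebraically closed set $\acl_0(Q)$ is stationary, by TACSS. Using the definable multiplicity property, the stationary part can be cut out by an $\mcL_0(Q')$-formula for some finite $Q'\sseq\acl_0(Q)$. PAC-ness of $P$ together with $Q\preccurlyeq P$ then yields a realization in $Q$. The Tarski–Vaught test would finish the argument.

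Once $\acl_0(Q)\preccurlyeq M$ is established, Corollary \ref{cor:nExistencePAC} concludes.
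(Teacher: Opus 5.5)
Your reduction is the one the paper intends. The corollary is stated right after Fact \ref{ElemSubsttr} and the remark on \cite[Proposition 4.2]{Hoffmann_Kowalski_2023}: apply Corollary \ref{cor:nExistencePAC}, with PACFO coming from the definable multiplicity property, TACSS and (\nameref{Star}) coming from EI, and $n$-existence of $T_0$ over $\acl_0(Q)$ coming from $\acl_0(Q)\preccurlyeq M$. Your bookkeeping is fine, and you are right that Fact \ref{ElemSubsttr} does not apply verbatim, since $Q$ need not be $\kappa$-PAC. But neither of your routes actually proves $\acl_0(Q)\preccurlyeq M$. In the first route, clause (2) of PACFO applies only to a pair that is already a model of $T_0\cup\Sigma$. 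For $(\acl_0(Q),Q)$ this presupposes $\acl_0(Q)\models T_0$, which is the conclusion you want. Moreover, $Q\preccurlyeq P$ tells you nothing by itself about an arbitrary $\mcL_P$-theory $\Sigma$, because clause (1) only produces $\Sigma$ for pairs whose predicate is already PAC. In the fallback route the error is concrete. The PAC property realizes stationary types over subsets of $P$, but your formula has parameters $Q'\sseq\acl_0(Q)$, which are typically not in $P$. A realization in $Q$ is also the wrong target: take $x=c$ with $c\in\acl_0(Q)\setminus Q$. (Also, $\omega$-stability gives ordinal-valued Morley rank, not finite rank.)

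The missing idea is a way to descend parameters from $\acl_0(A)$ to $A\sseq Q$; this is exactly where EI and finite multiplicity are needed.
\begin{itemize}
\item Let $\phi(x,c)$ be consistent, with $c\in\acl_0(A)$ and $A\sseq Q$ finite. Choose a type $p$ over $\acl_0(A)$ containing $\phi(x,c)$. Then $p$ is stationary, and by $\omega$-stability $p\rest A$ has finitely many extensions $p_1,\dots,p_m$ to $\acl_0(A)$.
\item Take an $\acl_0(A)$-independent family $(a_i)_{i\le m}$ with $a_i\models p_i$, and let $e$ be a real code (by EI) of the set $\{a_1,\dots,a_m\}$. Then $\tp_0(e/\acl_0(A))$ does not fork over $A$ and is invariant under $\aut_0(\acl_0(A)/A)$, so $\tp_0(e/A)$ is stationary.
\item $\tp_0(e/A)$ contains the following $A$-formula $\Theta(z)$: $z$ codes an $m$-element set which, for every realization $y$ of the algebraic formula isolating $\tp_0(c/A)$, contains a solution of $\phi(x,y)$.
\item By the PAC property, $\Theta$ has a solution in $P$. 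By QE, $\Theta$ is equivalent to a quantifier-free formula over $A$, so $Q\preccurlyeq P$ gives a solution $e'\in Q$.
\item The set coded by $e'$ lies in $\acl_0(Q)$ and contains a solution of $\phi(x,c)$. Tarski--Vaught then yields $\acl_0(Q)\preccurlyeq M$.
\end{itemize}
Alternatively, you can repair the first route by pairing $Q$ with $M$ rather than with $\acl_0(Q)$. Use the explicit axioms supplied by the definable multiplicity property (every formula over $P$ of Morley degree one has a point in $P$); after QE these are $\mcL_0$-sentences about the predicate, so they pass from $P$ to $Q$. You would then still need a compactness argument to pull $\acl_0(Q^*)\preccurlyeq M^*$ back down from a saturated extension $(M^*,Q^*)$ of $(M,Q)$.
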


\smallskip

As an example of $\omega$-stable theory we consider the theory $DCF_{0,m}$ of differentially closed fields of characteristic $0$ with $m$ commutating derivations in the language of rings with additional unary function symbols for the derivations.
A detailed presentation of the theory $DCF_{0,m}$ can be found in \cite{marker} and \cite{Marker2000ModelTO}. 

For $m\geq 1$, the theory $DCF_{0,m}$ is $\omega$-stable and eliminates quantifiers and imaginaries.
Moreover it has PACFO, even though it does not have the definable multiplicity property (see \cite{Pillay_Polkowska_2006, Sanchez_Tressl_2020_DiffLargeFields}).
We will see in in Subsection  \ref{sec:FieldsOperators} that $DCF_{0,m}$ has $n$-existence over any algebraically closed set, so we can apply Corollary \ref{cor:nExistencePAC} to deduce the following:

\begin{cor} \label{cor:DCF0PAC}
All bounded $DCF_{0,m}$-PAC theories have $n$-existence over existentially closed substructures containing an elementary substructure for all $n$.
\hfill \qedsymbol
\end{cor}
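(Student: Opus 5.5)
This follows by applying Corollary \ref{cor:nExistencePAC} with $T_0 = DCF_{0,m}$, so the plan is to check its hypotheses one by one and then supply the $n$-existence of $DCF_{0,m}$ over $\acl_0(Q)$.

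First, the standing assumptions on $T_0$. The theory $DCF_{0,m}$ is $\omega$-stable, hence stable, and it has quantifier elimination in the language of differential rings. PACFO holds by the cited work of Pillay--Polkowska and Sanchez--Tressl. Since $DCF_{0,m}$ eliminates imaginaries, types over algebraically closed sets (closed in $T_0^{eq}$, which here is the same as in $T_0$) are stationary, which gives TACSS. Property (\nameref{Star}) also follows from elimination of imaginaries, via the fact \cite[Remarque 1.7]{Blossier_MartinPizarro_2019} recalled above. Indeed, full EI implies gEI, so for $a$ in $P$ and $C$ relatively algebraically closed in $P$ we get $\cb_0(a/C)\sseq\dcl_0(C)$, and hence $\tp_0(a/C)$ is stationary.

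Next, to obtain the strong boundedness hypothesis \ref{hyp:StrongBdd} after naming an elementary substructure $N_0$ of $P$, I would use Fact \ref{StrongBounded}. Its input $\acl_0(P)=\dcl_0(P\acl_0(N_0))$ comes from boundedness of $P$: apply condition (3) of Definition \ref{defBounded} to an elementary extension $(M_0,N_0)\preccurlyeq(M,P)$ of pairs. The requirement that elements of $\dcl_0(P)$ be interdefinable with tuples from $P$ is automatic, because EI holds and $P$ is definably closed (Definition \ref{defBounded} assumes this). The corollary's proof in fact already takes these points as covered by Fact \ref{fact:SummaryPAC}, so this step should just be a citation.

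The remaining step, and the real content, is the $n$-existence of $DCF_{0,m}$ over $\acl_0(Q)$. Since $\acl_0(Q)$ is algebraically closed, it is enough that $DCF_{0,m}$ has $n$-existence over every algebraically closed set. Corollary \ref{cor:nUniqu} only gives this over existentially closed sets, which would require $\acl_0(Q)$ to be a differentially closed subfield. When $Q$ contains an elementary substructure of $P$, $\acl_0(Q)$ contains the algebraic closure of a PAC structure, and Fact \ref{ElemSubsttr} would make that an elementary substructure. However, that does not make $\acl_0(Q)$ itself existentially closed.

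So I would instead rely on the result announced for Subsection \ref{sec:FieldsOperators}: $DCF_{0,m}$ has $n$-existence over any algebraically closed set. The intended route is to view $DCF_{0,m}$ as an expansion of ACF$_0$ by unary function symbols and establish $n$-uniqueness over algebraically closed differential fields (in the spirit of ACFA). Lemma \ref{lemma:UniquenessImpliesExistence} then yields $(n+1)$-existence. I expect this to be the main obstacle, since it relies on a later section rather than on anything proved so far. With it in hand, Corollary \ref{cor:nExistencePAC} gives $n$-existence of $T_1$ over every existentially closed $Q$ containing an elementary substructure, for every $n$.
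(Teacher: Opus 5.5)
Your proposal is correct and follows the paper's argument. You check that $DCF_{0,m}$ is stable with QE, EI (hence TACSS and Property (\nameref{Star})) and PACFO, then feed the $n$-existence over all algebraically closed sets from Section~\ref{sec:FieldsOperators} into Corollary~\ref{cor:nExistencePAC}. One correction to your sketch of that forward reference: Section~\ref{sec:FieldsOperators} does not prove $n$-uniqueness for $DCF_{0,m}$ itself, since Proposition~\ref{prop:RelNUniq} would need a differentially closed base; instead it uses $n$-uniqueness of $ACF_0$ over algebraically closed fields to verify \ref{hyp:ContinuityFSystems}, and then transfers $n$-existence from $ACF_0$ via Theorem~\ref{thm:AbstractAmalgamationTransfer}.
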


\subsection{Separably closed fields} \label{sec:SCFPAC}
As explained above, Hrushovski showed in \cite{HrushovskiPFFieldsRelStructures} that perfect bounded pseudo-algebraically closed fields have generalized existence over algebraically closed sets containing elementary substructure.
In this section, we aim to generalize this result to all bounded pseudo-algebraically closed fields.

To do so, we apply Corollary \ref{cor:nExistencePAC} to bounded PAC substructures of separably closed fields, as Hoffmann and Kowalski proved in \cite{Hoffmann_Kowalski_2023} that such structures correspond exactly to bounded pseudo-algebraically closed fields (see Fact \ref{FirstOrderSCF}).
Let $\mcL$ be the language of rings.

\medskip

Let $K$ be a field of characteristic $p>0$.
We say that a field extension $L/K$ is \textit{separable} if $K$ is linearly disjoint from the field $L^p=\{l^p; l\in L\}$ over the field $K^p$.
Moreover, the field $K$ is said to be \textit{separably closed} if it has no nontrivial algebraic separable extension.
A \textit{separable closure} of $K$ is a separably closed algebraic extension of $K$. 
Such an extension always exists and is unique up to $K$-isomorphism; it is denoted $K^s$.

If $K$ is a field of characteristic $p$, then $K$ is in particular a $K^p$-vector space, and the dimension $[K:K^p]$ is either infinite or of the form $p^e$, with $e$ an integer.
The imperfection degree $e(K)$ of $K$ is then equal to $\infty$ (if it is the former) or to $e$ (if it is the latter).
The $\mcL$-theory of separably closed fields of characteristic $p$ and imperfection degree $e$ is complete (see \cite{Ershov_1967}); it is denoted by $SCF_{p,e}$.

\medskip

Separably closed fields can be studied in the language $\mcL$ of rings as explained above, but it is not the only possibility.
One can also consider them in the language $\mcL(\lambda)$ obtained by adding several function symbols $\lambda_{n,\alpha}$ ($\alpha\in [0,p-1]^n$) to the language $\mcL$, as defined in \cite{Delon_1988}.
A $n$-tuple $b$ is said to be $p$-\textit{free} is the family of its $p$-monomials is $K^p$-independent.
In that case, if $a$ is an element of the $K^p$-vector space generated by the $p$-monomials of $b$, then the $\lambda_{n,\alpha}$ are uniquely defined by
$$a= \sum_{\alpha \in [0,p-1]^n}\lambda_{n,\alpha}(a,b)^p b^\alpha.$$
Otherwise, set $\lambda_{n,\alpha}(a,b)=0$.
This new theory will be denoted $SCF_{p,e}(\lambda)$; it is also complete and has quantifier elimination but does not eliminate imaginaries if $e$ is nonzero (see \cite[Proposition 43]{Delon_1988}).

Finally, if the imperfection degree $e$ is finite, we can also consider the expansion $SCF_{p,e}(b, \lambda)$ of $SCF_{p,e}$ to the language $\mcL(b,\lambda)$, as defined in \cite{Delon_1988}. 
In that case, the tuple $b$ is a finite tuple of constants interpreted as a $p$-basis and the $\lambda$'s are unary functions defined as the partial evaluations $\lambda_{e, \alpha}(\cdot, b)$ of the $\lambda$-function defined above.
This theory is complete and eliminates quantifiers and imaginaries.

\medskip

Our goal is to apply Corollary \ref{cor:nExistencePAC} to bounded PAC substructures in both theories $SCF_{p, \infty}(\lambda)$ and $SCF_{p,e}(b, \lambda)$, $e$ finite. 
Recall that the theory $SCF_{p,e}(b,\lambda)$ has elimination of imaginaries but $SCF_{p, \infty}(\lambda)$ does not. Bartnick proved in \cite{bartnick_2024} that types over algebraically closed sets in $SCF_{p, \infty}(\lambda)$ are stationary, so we will only have to show that the property (\nameref{Star}) holds for $SCF_{p, \infty}$-substructures.

\begin{fact} \label{SeparableExtension} Some results about separable extensions and separably closed fields:
\begin{enumerate}
	\item \cite[Fact 1]{Srour_IndependenceSCF}
	If $L/K$ is a separable algebraic extension, then $L$ and $K$ have the same imperfection degree: $e(L)=e(K)$.
	
	\item \cite[Paragraph 1.4]{Chatzidakis_2004}
	A field extension $L/K$ is separable if and only if any $p$-basis of $K$ is still $p$-independent in $L$, or, equivalently, if $K$ is closed under the $\lambda$-functions of $L$.
	
	\item \cite[Paragraph 1.6]{Chatzidakis_2004} Let $M\models SCF_{p,e}$ be a separably closed field and $A$ a subset of $M$. The model-theoretic definable closure $\dcl(A)$ is the field generated by the closure of $A$ under the $\lambda$-functions of $M$, and its model-theoretic algebraic closure is the separable closure of $\dcl(M)$.
\end{enumerate}
\end{fact}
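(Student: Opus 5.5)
Since these are standard facts from the cited literature, I would recover each item from the definition of separability, that is, linear disjointness of $K$ and $L^p$ over $K^p$. I would treat item (2) first, then (1), then (3). Throughout I would fix a $p$-basis $B$ of $K$, so that $K=K^p[B]$ and the $p$-monomials of $B$ form a $K^p$-basis of $K$.

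For the first equivalence in (2), I would note that $K$ and $L^p$ are linearly disjoint over $K^p$ exactly when some (equivalently, every) $K^p$-basis of $K$ remains $L^p$-linearly independent. Applying this to the $p$-monomials of $B$ gives precisely the statement that $B$ is $p$-independent in $L$.

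For the $\lambda$-closure characterisation I would argue in both directions.
\begin{itemize}
\item Suppose $B$ stays $p$-independent in $L$, and take $a\in K$ and a tuple $b$ from $K$ that is $p$-free in $L$. Then $b$ is $p$-free in $K$. Moreover, $a$ lies in the $K^p$-span of the $p$-monomials of $b$ if and only if it lies in their $L^p$-span, by linear disjointness. By uniqueness of the expansion, $\lambda_{n,\alpha}(a,b)$ computed in $L$ lies in $K$. In every other case the value is $0\in K$.
\item Conversely, suppose $K$ is closed under the $\lambda$-functions of $L$, and suppose some finite $b\subseteq B$ becomes $p$-dependent in $L$. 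I would take $b$ minimal, so that $b=(b',b_n)$ with $b'$ $p$-free in $L$ and $b_n\in L^p[b']$. The values $\lambda_{n-1,\alpha}(b_n,b')$ then lie in $K$, so $b_n\in K^p[b']$. This contradicts the $p$-independence of $B$ in $K$.
\end{itemize}

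For (1), (2) shows that $B$ remains $p$-independent in $L$, so it only remains to show that $B$ spans, i.e. $L=L^pK$. I would observe two things.
\begin{itemize}
\item $L/L^pK$ is separable algebraic, since $L/K$ is.
\item $L/L^pK$ is purely inseparable, since $x^p\in L^p$ for every $x\in L$.
\end{itemize}
Hence $L=L^pK=L^p[B]$, and $e(L)=e(K)$.

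For (3), let $E$ be the field generated by the closure of $A$ under the $\lambda$-functions of $M$. The inclusion $E\subseteq\dcl(A)$ is immediate, because field operations and $\lambda$-functions are $\emptyset$-definable in the ring language. By (2), $E$ is a separable subfield of $M$.

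For the reverse inclusion I would use completeness of $SCF_{p,e}$ together with quantifier elimination in $\mcL(\lambda)$ (Delon). From these, the type of a tuple over $E$ is determined by the isomorphism type of the $\lambda$-closed field it generates. Two cases then cover an element $c\notin E$.
\begin{itemize}
\item If $c$ is separably algebraic over $E$ but not in $E$, then a nontrivial element of $\mathrm{Gal}(E^s/E)$ moves $c$. This automorphism extends to an automorphism of a saturated $M$ fixing $E$, because $E^s\subseteq M$ is a separable subfield and QE applies.
\item If $c$ is not separably algebraic over $E$, I would build infinitely many $E$-conjugates of $c$. I would realise $\lambda$-closed, $E$-isomorphic copies of $E\langle c\rangle$ that are independent from one another and separable in $M$, and then invoke QE and saturation.
\end{itemize}
This shows $\dcl(A)=E$, and also that $\acl(A)\subseteq E^s$. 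The reverse inclusion $E^s\subseteq\acl(A)$ holds because separable algebraic elements have finitely many conjugates. The statement's ``$\dcl(M)$'' should read $\dcl(A)$.

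The main obstacle is the transcendental case of (3). There I have to make sure the amalgamated copies stay separable in $M$, so that QE transfers the isomorphism to equality of types. I would handle this by taking free (linearly disjoint) composita, which preserve separability, and then embedding them into $M$ using the fact that $M$ is separably closed.
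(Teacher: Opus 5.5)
The paper does not prove this Fact; it only quotes Srour and Chatzidakis--Hrushovski, so your argument has to stand on its own. Your arguments for (2), for (1), and for the separably algebraic case of (3) are sound, with two small additions for that case. First, $M/E^s$ is separable by (1) and (2). Second, since $M/E$ is separable, any element of $M$ algebraic over $E$ is separable over $E$, so your second case is exactly the transcendental one. You are also right that $\dcl(M)$ should read $\dcl(A)$.

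\textbf{The gap.} It lies in the transcendental case of (3) when $e$ is finite.
\begin{itemize}
\item If $K\subseteq M$ and $M/K$ is separable, then by (2) a $p$-basis of $K$ stays $p$-independent in $M$, so $e(K)\le e$.
\item The free compositum over $E$ of independent copies of $E\langle c\rangle$ has imperfection degree growing with the number of copies whenever $e(E\langle c\rangle)>e(E)$. So for finite $e$ it cannot be embedded separably into $M$.
\item Concretely, take $e=1$ and $A=\emptyset$, so $E=\mathbb{F}_p$, and take $c\in M\setminus M^p$. Then $E\langle c\rangle=\mathbb{F}_p(c)$, but two independent copies generate $\mathbb{F}_p(c_1,c_2)$, which has imperfection degree $2$. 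Your mechanism already fails at two copies.
\end{itemize}
Two lesser points. Separable closedness of $M$ does not by itself embed transcendental extensions; that needs saturation together with QE. Also, $E\langle c\rangle$ need not be regular over $E$ (it can contain elements of $E^s\setminus E$), so composita should be formed over $E\langle c\rangle\cap E^s$. Conjugates of $c$ do exist, because each copy only has to be separable in $M$ on its own. But your construction does not produce them, and the compositum strategy works only for $e=\infty$ or when $e(E)=e$.

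\textbf{A fix that avoids amalgamation.} Fix a $p$-basis $b_E$ of $E$.
\begin{itemize}
\item By saturation, choose $d$ in $M$ algebraically independent from $c$ over $E$, with $b_E\cup d$ $p$-independent in $M$ and of size $e$ (take $d$ countably infinite if $e=\infty$). For instance, start from any such $d^0$ and replace it by $d^0+s^p$, where $s$ is algebraically independent over $E(c,d^0)$.
\item Then $E'=E(d)$ has $p$-basis $b_E\cup d$. By (1) and (2), both $M/E'$ and $M/E'^s$ are separable.
\item So $E'^s$ is a separably closed $\mcL(\lambda)$-substructure of $M$ of imperfection degree $e$, and QE gives $E'^s\preceq M$.
\item Hence $\acl(A)\subseteq\acl(E')=E'^s$. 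Since $c$ is transcendental over $E'$, it is not in $E'^s$, so it is not algebraic over $A$.
\end{itemize}
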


To apply Corollary \ref{cor:nExistencePAC} to PAC substructures of a theory $T_0$, we first need to verify that the initial theory $T_0$ has $n$-existence.
Here, in the case $e$ finite, the theory $SCF_{p,e}(b, \lambda)$ is a stable theory with quantifier elimination, so it follows by Fact \ref{Sharp} that $SCF_{p,e}(b, \lambda)$ has $n$-existence over elementary substructures for all $n$.
In the infinite case, the theory $SCF_{p,\infty}(\lambda)$ is still stable but does not eliminate imaginaries.
We however know that types over algebraically closed sets are stationary, so we get that $SCF_{p,\infty}(\lambda)$ has $n$-amalgamation over elementary substructures by Corollary \ref{cor:nUniqu}.

\smallskip

Let us now check that the algebraic closure of a PAC substructure is an elementary substructure.
Assume that $T_0$ is either $SCF_{p,e}$, $SCF_{p,e}(\lambda)$ or $SCF_{p,e}(b, \lambda)$. 
Let $A$ be an algebraically closed subset of a model $M$ of $T_0$ containing a $p$-free family of $M$ of size $e$ (which is always the case if $T_0$ is either $SCF_{p,e}(b, \lambda)$, $e$ finite).
By Fact \ref{SeparableExtension} and quantifier elimination, we then get that $A$ is an elementary substructure of $M$.

Note that this yields an alternative proof of $n$-existence in $SCF_{p,\infty}(\lambda)$:  in $SCF_{p,\infty}(\lambda)$, any algebraically closed subset of a model of $SCF_{p, \infty}$ containing an elementary substructure is again an elementary substructure, so $n$-existence follows by Lemma \ref{CompletionLemma}.

\begin{lemma}
We assume here that $T_0$ is either $SCF_{p,e}(b, \lambda)$ ($e$ finite) or $SCF_{p, \infty}(\lambda)$.
If $P$ is a bounded PAC substructure of a model $M$ of $T_0$, then $\acl_M(P)$ is an elementary substructure of $M$.
\end{lemma}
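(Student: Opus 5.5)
The plan is to reduce the statement to the observation made just before it: an algebraically closed subset $A$ of a model $M$ of $T_0$ that contains a $p$-free family of $M$ of size $e$ is an elementary substructure of $M$, by Fact \ref{SeparableExtension} and quantifier elimination. So it suffices to show that $P$ contains a $p$-basis of $M$, i.e. a $p$-free family of size $e$ (of size $\infty$ in the case $SCF_{p,\infty}(\lambda)$). Since $\acl_M(P)$ contains $P$, and by Fact \ref{SeparableExtension}(3) it is the separable closure of $\dcl(P)$, it is in particular separably closed and closed under the $\lambda$-functions. The claim then follows.

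In the case $SCF_{p,e}(b,\lambda)$ with $e$ finite there is essentially nothing to do. The $p$-basis $b$ is named by constants, and $P$ is an $\mcL(b,\lambda)$-substructure, so $b\sseq P$. Thus $\acl_M(P)$ contains a $p$-free family of size $e$, and the preceding remark applies directly. Concretely, $\acl_M(P)$ is a separably closed field containing $b$ and closed under the $\lambda$-functions relative to $b$. Hence $M$ is a separable extension of it by Fact \ref{SeparableExtension}(2), its imperfection degree is $e$, and quantifier elimination yields $\acl_M(P)\preccurlyeq M$.

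In the infinite case I would use the PAC property to produce a $p$-free family in $P$ of arbitrary finite size. For each $n$, consider the type over $\emptyset$ (or over $\acl_0(\emptyset)\cap P$, which is relatively algebraically closed) of an $n$-tuple that is $p$-free in $M$. In $SCF_{p,\infty}(\lambda)$ this type is determined by quantifier elimination, and the $\lambda$-functions of a $p$-free tuple over the prime field generate a field over which the tuple is generic. I expect this type to be stationary, using TACSS as established by Bartnick, since the empty set's algebraic closure is in play. The PAC property then realizes it in $P$. By saturation of $P$ (PACFO, $\kappa$-saturated) we obtain an infinite $p$-free family in $P$, hence in $\acl_M(P)$. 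Since $\acl_M(P)$ is separably closed and $M$ is a separable extension of it (it is $\lambda$-closed, Fact \ref{SeparableExtension}(2)), its imperfection degree is infinite, and quantifier elimination concludes.

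The main obstacle is the stationarity step in the infinite case: checking that "$x$ is $p$-free" is implied by a stationary type. If working over $\emptyset$ fails, I would instead work over the small elementary substructure named in $\mcL_0$, which already lies in $P$. Over a model every type is stationary, and the type of a tuple $p$-independent over that model is consistent, so PAC realizes it in $P$. Note also that any $p$-free family of $M$ lying in $P$ remains $p$-free in $\acl_M(P)$, because $p$-independence in the larger field $M$ implies it in the subfield.
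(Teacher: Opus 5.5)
Your overall plan is the same as the paper's: the finite case is immediate since the constants $b$ lie in $P$. In the infinite case one uses the PAC property to put an infinite $p$-free family into $P$. After that, $\acl_M(P)$ is a separably closed, $\lambda$-closed field of infinite imperfection degree, and quantifier elimination gives $\acl_M(P)\preccurlyeq M$.

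The gap is exactly the step you flagged. PAC only realizes \emph{stationary} types, and neither of your justifications of stationarity works.
TACSS gives stationarity over algebraically closed sets only. Neither $\emptyset$ (here $\dcl(\emptyset)=\mathbb{F}_p$ whereas $\acl(\emptyset)=\mathbb{F}_p^{alg}$) nor $\acl_0(\emptyset)\cap P$ is algebraically closed in $M$. Property $(\star)$ does not help either, since it concerns types of tuples already lying in $P$, which is what you are trying to produce.
Your fallback is also unsound. In the setting of Subsection~\ref{sec:PAC}, the parameters named in $\mcL_0$ form an elementary substructure of $P$, i.e.\ a model of $T_1$ (a PAC field, in general not separably closed), not a model of $T_0$. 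So ``over a model every type is stationary'' does not apply. Had $P$ contained a model of $T_0$, that model would already contain an infinite $p$-basis of $M$ and no PAC argument would be needed. In any case, the lemma assumes no such parameters.

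The missing ingredient, which the paper cites from Delon, is that in $SCF_{p,\infty}(\lambda)$ the type over $C$ of a tuple $p$-independent over $C$ is complete and stationary. You can verify this along the lines you began:
\begin{enumerate}
\item Let $C=\dcl(C)$ and let $a$ be $p$-independent over $C$ in $M$. Then $a$ is algebraically independent over $C$.
\item $C(a)$ is again $\lambda$-closed: its $p$-basis, a $p$-basis of $C$ together with $a$, stays $p$-independent in $M$. So by quantifier elimination all such tuples have the same type over $C$.
\item Since $\acl(C)$ is separable algebraic over $C$, $p$-independence over $C$ and over $\acl(C)$ coincide. The same argument then shows this type has a unique extension to $\acl(C)$, which is stationary by TACSS. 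Hence the type over $C$ is stationary.
\end{enumerate}
PAC then realizes such a tuple in $P$ for any small $C\subseteq P$. Iterating over $C=\dcl(a_1\dots a_k)$, or using your saturation argument, gives the required infinite $p$-free family in $P$.
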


\begin{proof}
If $T_0$ is $SCF_{p,e}(b, \lambda)$, then this is immediate by the above discussion, so we may assume that $T_0$ is $SCF_{p, \infty}(\lambda)$.
It was proved in \cite{Delon_1988} that the the type of an element which does not belong to the $K^p$-vector space generated by the parameters is stationary, so the PAC-property implies that $P$ (and hence $\acl_M(P)$) contains an infinite $p$-independent family. 
It follows that $\acl_M(P)$ is an elementary substructure of $M$.
\end{proof}

In \cite{Hoffmann_Kowalski_2023},  the authors give the following  description of PAC substructures of separably closed fields (see \cite[Theorem 4.18]{Hoffmann_Kowalski_2023} and the following discussion for the first point, and \cite[Theorem 4.23]{Hoffmann_Kowalski_2023} and the following discussion for the second point).
Note that our definition of a PAC substructure and the one from \cite{Hoffmann_Kowalski_2023} are in general different, but coincide when the considered structures are sufficiently saturated.
In particular, the results obtained in \cite{Hoffmann_Kowalski_2023} still hold in our context.

\begin{fact}  \label{FirstOrderSCF}
\
\begin{enumerate}
	\item Let $e$ be finite and  $K$ a substructure of some large model $M \models SCF_{p,e}(b, \lambda)$. 
	The structure $K$ is $SCF_{p,e}(b, \lambda)$-PAC  if and only if $K$ is a pseudo-algebraically closed field.
	In particular, the PAC-property is first order for $SCF_{p,e}(b, \lambda)$.
	
	\item Let $K$ be a substructure of some large model $M \models SCF_{p, \infty}(\lambda)$. 
	The structure $K$ is $SCF_{p, \infty}(\lambda)$-PAC if and only if $K$ is a pseudo-algebraically closed field with infinite imperfection degree.
	In particular, the PAC-property is first order for $SCF_{p, \infty}(\lambda)$.
\end{enumerate}
\end{fact}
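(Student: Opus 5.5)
The plan is to reduce both items to a field-theoretic description of stationary types in $SCF_{p,e}(b,\lambda)$ and $SCF_{p,\infty}(\lambda)$. Once that description is in place, I would use that pseudo-algebraically closed fields have Zariski-dense sets of rational points on absolutely irreducible varieties. The first step is the following characterisation. For $A=\dcl_0(A)$, which by Fact \ref{SeparableExtension} is a field closed under the $\lambda$-functions, and a tuple $a$, the type $\tp_0(a/A)$ is stationary if and only if $\dcl_0(Aa)$ is a regular extension of $A$. Since $\dcl_0(Aa)$ is $\lambda$-closed in $M$, it is separable over $A$, so regularity amounts to $A$ being relatively separably algebraically closed in $\dcl_0(Aa)$. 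I would prove this from quantifier elimination: the type of $a$ over $A$ is determined by the isomorphism type of the $\lambda$-closed field $\dcl_0(Aa)$ over $A$. Nonforking extensions then correspond to extensions linearly disjoint from $\acl_0(A)=A^s$, and there is exactly one such extension precisely when the extension is regular.

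\emph{PAC substructure $\Rightarrow$ pseudo-algebraically closed field.} Let $K$ be a PAC substructure and $V$ an absolutely irreducible variety over a finitely generated subfield of $K$. Take a generic point $a$ of $V$ over $A=\dcl_0(\text{parameters})\subseteq K$. Because $V$ is absolutely irreducible, $A(a)$ is regular over $A$. Enlarging $a$ by $\lambda$-coordinates, $\dcl_0(Aa)$ is still regular over $A$, since the added elements generate a separable extension and add no new separably algebraic elements. By the characterisation, $\tp_0(a/A)$ is stationary, so the PAC property realises it in $K$, which yields a $K$-rational point of $V$.

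For the imperfection degree in item (2), the argument of the preceding lemma applies. The type of an element lying outside the $K^p$-span of the $p$-monomials of finitely many parameters is stationary. Iterating this realisation produces an infinite $p$-independent subset of $K$, so $e(K)=\infty$. In item (1) the constants $b$ already lie in $K$ and form a $p$-basis of $K$: they are $p$-independent in $M$, hence in $K$, and $[K:K^p]\le p^e$.

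\emph{Pseudo-algebraically closed field $\Rightarrow$ PAC substructure (after saturation).} Let $K$ be pseudo-algebraically closed, with infinite imperfection degree in case (2), and take it $\kappa$-saturated. Let $A\subseteq K$ be small and $q=\tp_0(a/A)$ stationary. By compactness and saturation it suffices to realise in $K$ each finite part of $q$. A finite part says that some finite piece $a'$ of $a$ and of its $\lambda$-values lies on a given $A$-irreducible variety $W$ outside a proper closed subset. In addition, finitely many $p$-independence or $p$-dependence statements must hold relative to parameters in $A$; these are what pin down the $\lambda$-values. Regularity of $\dcl_0(Aa)$ over $A$ makes $W$ absolutely irreducible. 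The dependence statements are encoded by adding variables for the coefficients $\lambda_{n,\alpha}$ and imposing the polynomial identities $x=\sum y_\alpha^p\,m_\alpha$, which still gives an absolutely irreducible variety $W'$. The independence statements are the delicate part: they are not Zariski-open conditions. I would handle them by the standard imperfect-PAC trick of choosing the point so that the relevant coordinates are forced to be $p$-independent over a $p$-independent subset of $K$. In case (2) infinite imperfection degree supplies fresh $p$-independent elements; in case (1), $e(K)=e$ fixes the $p$-basis $b$. I expect this step, realising the $\lambda$-structure conditions and not merely the underlying variety, to be the main obstacle. Density of $W'(K)$ then handles the open conditions.

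Finally, first-orderness follows from the two items. Being a pseudo-algebraically closed field is axiomatised by the usual scheme over absolutely irreducible varieties of bounded degree. Imperfection degree $e$ (with $b$ a $p$-basis), respectively $\infty$, is also first order. Together with the two directions above, this gives the set of sentences $\Sigma$ required in Definition \ref{def:PACFO}.
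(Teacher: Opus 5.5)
The paper gives no proof of this fact. It imports both items from Hoffmann--Kowalski (Theorems 4.18 and 4.23 there), remarking only that their notion of PAC substructure agrees with Definition \ref{defPAC} for sufficiently saturated structures. Your independent argument has a sensible architecture, and the converse direction of item (1) does work. With $b$ named, $x=\sum_\alpha\lambda_\alpha(x)^pb^\alpha$ is a polynomial identity. Once the finitely many $\lambda$-iterates in a finite part of $q$ are coordinates of $a'$, it holds on the whole locus $W$, and $K$-points of $W$ off the bad closed set have the right $\lambda$-values because $b$ is a $p$-basis of $K$.

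\textbf{The main gap: the converse direction of item (2).} This is exactly the step you leave open. In $SCF_{p,\infty}(\lambda)$ essentially every term $\lambda_{n,\alpha}(s,r)$ carries a $p$-freeness requirement on $r$ or on $(s,r)$. This includes your ``dependence statements'', which need $r$ to be $p$-free. $p$-freeness is a universal condition, and density of $W(K)$ says nothing about it; this is where the real content of the infinite case lies. Making it existential requires $p$-independent $t$ from $K$ and an auxiliary field $L'\supseteq L=\dcl_0(Ka)$, with $a$ independent from $K$ over $A$, such that $t\in L'^p[y]$ and $|t|=|y|$. Then any $K$-realisation $y'$ satisfies $t\in K^p[y']$ and is $p$-free by a degree count. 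But such an $L'$ is never separable over $L$: linear disjointness of $L$ and $L'^p$ over $L^p$ forces $L\cap L'^p[y]=L^p[y]$, while $t\notin L^p[y]$ in the cases that matter. So you must verify directly that $L'$ (equivalently, the variety with the new coefficient coordinates) is still regular over $K$. This holds for the obvious construction when $y$ is $p$-independent over $L^p(K)$. It fails when $y$ is partly $p$-dependent over $L^p(K)$ (e.g.\ $y\subseteq K$), which needs a separate exchange argument. None of this appears in your proposal.

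\textbf{An error in the forward direction.} Adjoining the $\lambda$-coordinates of an arbitrary generic point of $V$ can create new separably algebraic elements. Take $s\in A^s\setminus A$, $b\in A$ $p$-independent, and $w$ transcendental over $A$. Then $u=s^pb+w^p$ is generic in $\mathbb{A}^1$ over $A$, yet $\lambda_{1,1}(u,b)=s$ lies in $\dcl_0(Au)$, so $\tp_0(u/A)$ is not stationary. You have to choose the point:
\begin{itemize}
\item In the infinite case, embed the function field $A(V)$ over $A$ into $M$ as an $\mcL(\lambda)$-substructure, so that $\dcl_0(Aa)=A(a)$ is regular over $A$.
\item In the finite case, first enlarge $A(V)$ to a regular extension of $A$ in which $b$ is a $p$-basis, and embed that.
\end{itemize}

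\textbf{A smaller point.} In $SCF_{p,\infty}(\lambda)$, which lacks elimination of imaginaries, the implication from regularity to stationarity does not follow from uniqueness of the extension to $A^s$ alone. You need Bartnick's stationarity over algebraically closed sets, or Srour's description of independence, to conclude.
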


Remark that in the case of the characterization of $SCF_{p,e}(b, \lambda)$-PAC structures, it is assumed that $K$ is a substructure in the language $\mcL_{b,\lambda}$, so we automatically have $e(K)=e$.
If $K$ is simply a substructure in the language $\mcL$, then we get that $K$ is PAC in $M$ if and only if $K$ is a pseudo-algebraically closed field of imperfection degree $e$. 

Note also that a PAC-substructure of a field is always a field, as PAC-substructure are definably closed.

\bigskip

We know that $SCF_{p,e}(b,\lambda)$ with $e$ finite is stable, eliminates quantifiers as well as imaginaries and has PACFO by \ref{FirstOrderSCF}. 
We can therefore apply Corollary \ref{cor:nExistencePAC}, which yields:

\begin{cor} \label{SCFfinite}
Any theory which is bounded PAC in  $SCF_{p,e}(b, \lambda)$ ($e$ finite) has $n$-existence over over existentially closed substructures containing an elementary substructure for any integer $n$.
\hfill \qedsymbol
\end{cor}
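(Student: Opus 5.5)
This will be a direct application of Corollary \ref{cor:nExistencePAC}, with $T_0 = SCF_{p,e}(b,\lambda)$ for $e$ finite and $T_1$ the theory of a bounded PAC substructure $P$ of a model $M$ of $T_0$. The plan is to check the hypotheses of that corollary one at a time, and then to check that $T_0$ has $n$-existence over $\acl_0(Q)$ for every relevant $Q$.

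\textbf{Step 1: the hypotheses on $T_0$.} The theory $SCF_{p,e}(b,\lambda)$ is stable and eliminates quantifiers and imaginaries, as recalled above. PACFO holds by Fact \ref{FirstOrderSCF}(1). By elimination of imaginaries, types over algebraically closed sets are stationary, so TACSS holds. For Property (\nameref{Star}), I would use the fact from \cite[Remarque 1.7]{Blossier_MartinPizarro_2019}. Full EI implies geometric EI, so for $a$ in $P$ and $C$ relatively algebraically closed in $P$, the canonical base $\cb_0(a/C)$ lies in $\dcl_0(C)$. Hence $\tp_0(a/C)$ is stationary, which is exactly (\nameref{Star}).

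\textbf{Step 2: $n$-existence of $T_0$ over $\acl_0(Q)$.} Let $Q$ be existentially closed in a model of $T_1$ and contain an elementary substructure $P_0$ of $P$. The key observation is that $P_0$, as an $\mcL(b,\lambda)$-substructure, contains the constants $b$, a $p$-basis of size $e$. By the discussion preceding the Lemma, any algebraically closed subset of $M$ containing a $p$-free family of size $e$ is an elementary substructure of $M$; this uses Fact \ref{SeparableExtension} and quantifier elimination. So $\acl_0(Q)$ is an elementary substructure of $M$. Fact \ref{Sharp} then gives $n$-existence of $T_0$ over $\acl_0(Q)$ for all $n$. Alternatively, Corollary \ref{cor:nUniqu} gives the same conclusion, since it covers existentially closed sets in particular.

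\textbf{Step 3: conclusion.} Corollary \ref{cor:nExistencePAC} then yields $n$-existence of $T_1$ over $Q$ for all $n$.

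The only delicate point is Step 1, namely the passage from EI to (\nameref{Star}). That passage relies on the cited Blossier–Martin-Pizarro fact. One also has to make sure that the standing assumption in Section \ref{sec:PAC}, that $\mcL_0$ contains parameters for an elementary substructure of $P$, is harmless. It is harmless because adding such parameters preserves QE, EI, stationarity and PACFO, and Corollary \ref{cor:nExistencePAC} is applied after this expansion. Everything else is a routine matching of hypotheses.
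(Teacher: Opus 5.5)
Your proposal is correct and follows the paper's route. The paper's proof is the one-line application of Corollary~\ref{cor:nExistencePAC}, using that $SCF_{p,e}(b,\lambda)$ is stable with QE and EI (hence TACSS and (\nameref{Star})) and has PACFO, together with $n$-existence over elementary substructures from Fact~\ref{Sharp}; you additionally spell out why $\acl_0(Q)$ is an elementary substructure (it contains the $p$-basis $b$), a step the paper leaves to its preceding discussion.
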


It only remains to see that if $P$ is a bounded PAC substructure of a model $M$ of $T=SCF_{p,e}(\lambda)$, then Property (\nameref{Star}) holds, as Theorem \ref{thm:AbstractAmalgamationTransfer} will then imply real $n$-amalgamation for $P$ over existentially closed sets containing elementary substructures.
To do so, we will use the following fact:

\begin{fact} \cite{Srour_IndependenceSCF}
Let $M$ be a separably closed field with infinite imperfection degree, and let $A$ and $B$ be two subfields of $M$ such that the extensions $M/A$ and $M/B$ are separable.
\begin{enumerate}
	\item If $A$ and $B$ have the same algebraic type, then they have the same $\mcL(\lambda)$-type in $M$.
	
	\item Let $C$ be a common subfield of $ A$ and $B$ such that the extension $M/C$ is separable. We have
\[ 
	A \ind_{C}^{\lambda} B 
	\Leftrightarrow 
	A \ind_{C}^{ACF} B, \,\, A \ind_{CA^p}^{ld} BA^p \text{ and } M/\langle AB\rangle_{ACF} \text{ is separable.} 
\]
where $\ind^\lambda$ is the nonforking independence in  $SCF_{p, \infty}(\lambda)$ and $\ind^{ACF}$ the nonforking independence in  $ACF_p$, while $\langle AB\rangle_{ACF}$ is the (field theoretic) composite of the fields $A$ and $B$.
	
\end{enumerate}
\end{fact}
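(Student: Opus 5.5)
The plan is to prove the two items in order, using the quantifier elimination of $SCF_{p,\infty}(\lambda)$ (Delon) for (1), and for (2) the characterization of nonforking in a stable theory as the unique relation satisfying the Kim--Pillay axioms together with \nameref{ppt:stationarity}.

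\textbf{Item (1).} First I show that if $M/A$ is separable then $A$ is closed under the $\lambda$-functions of $M$, and that their values are computed inside $A$. By Fact \ref{SeparableExtension}(2), a $p$-independent tuple of $A$ stays $p$-independent in $M$. So for $a\in A$ and a $p$-free tuple $b$ from $A$, the decomposition $a=\sum_\alpha\lambda_{n,\alpha}(a,b)^p b^\alpha$ happens in $A$ exactly when it happens in $M$, by uniqueness of coordinates; and if $b$ is not $p$-free in $A$, it is not $p$-free in $M$ either. Hence the $\mcL(\lambda)$-structure on $A$ is determined by its field structure. A field isomorphism $A\to B$, with both extensions $M/A$ and $M/B$ separable, is therefore an $\mcL(\lambda)$-isomorphism, i.e. $\qftp_\lambda(A)=\qftp_\lambda(B)$. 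Quantifier elimination then gives $A\equiv B$.

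\textbf{Item (2).} Let $A\ind^*_C B$ denote the right-hand side. I verify, for $\lambda$-closed subfields (equivalently, by Fact \ref{SeparableExtension}, subfields over which $M$ is separable), the following properties: \nameref{ppt:invar}, \nameref{ppt:sym}, \nameref{ppt:monot}, \nameref{ppt:basemonot}, \nameref{ppt:transit}, \nameref{ppt:finchar}, \nameref{ppt:locchar}, \nameref{ppt:extension}, and \nameref{ppt:stationarity} over models. Each property reduces to known facts.
\begin{itemize}
\item $\ind^{ACF}$ satisfies all of these properties.
\item Linear disjointness over a base is symmetric and transitive in towers.
\item Separability of $M$ over a composite is compatible with towers: if $M/D$ is separable and $D/E$ is separable then $M/E$ is separable. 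The condition $A\ind^{ld}_{CA^p}BA^p$ is exactly what makes $\langle AB\rangle/A$ separable.
\end{itemize}
Symmetry of the middle condition is the one point needing care. I would rephrase the conjunction of the first two conditions as ``a $p$-basis of $A$ over $C$ together with a $p$-basis of $B$ over $C$ is $p$-independent in $\langle AB\rangle$ over $C$''. In that form the condition is visibly symmetric.

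For stationarity, fix $A\equiv_C A'$ with both $A\ind^*_C B$ and $A'\ind^*_C B$, and $C$ a model. Then $C$ is separably closed, so the $ACF$-types over $C$ are stationary. The $ACF$-independence together with the $p$-basis condition determines the field isomorphism type of $\langle AB\rangle$ over $B$ (the tensor product $A\otimes_C B$ is a domain whose fraction field is determined). Separability of $M$ over $\langle AB\rangle$ and over $\langle A'B\rangle$ then lets me apply item (1) to conclude $AB\equiv A'B$.

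\textbf{Main obstacle: \nameref{ppt:extension}.} Given $A,B\supseteq C$, I must find $A'\equiv_C A$ with $A'\ind^*_C B$ inside $M$. The construction I would try is the following.
\begin{itemize}
\item Form the free amalgam $F=\mathrm{Frac}(A\otimes_C B)$. I would check that $F$ is separable over both $A$ and $B$ and that the union of the two relative $p$-bases is $p$-independent in $F$.
\item Embed $F$ over $B$ into $M$ so that $M/F$ is separable. This is possible because $M$ is separably closed of infinite imperfection degree and saturated: I extend a $p$-basis of $F$ to a $p$-basis of $M$ by realizing a suitable type, using saturation and infinite imperfection degree.
\end{itemize}
Item (1) then gives $A'\equiv_C A$ for the image $A'$ of $A$. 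Once the properties are verified, Kim--Pillay (in its stable form) identifies $\ind^*$ with $\ind^\lambda$.
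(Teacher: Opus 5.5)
The paper gives no proof of this Fact. It is quoted from Srour and used as a black box; the later proof of $(\star)$ uses only item (1) and the forward direction of item (2). So your text is a self-contained reconstruction, not a variant of anything in the paper, and has to be judged on its own.

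\textbf{What works.} Item (1) is fine: it is Fact~\ref{SeparableExtension}(2) (the $\lambda$-closed subfields are exactly those over which $M$ is separable, with their intrinsic $\lambda$-functions) combined with Delon's quantifier elimination. The strategy for item (2) is sound in principle, since Extension plus Stationarity over models yields the independence theorem over models. Your stationarity step is exactly the argument the paper later runs for $(\star)$. Over a separably closed $C$ one has $C^{alg}=C^{perf}$, so every $ACF$-type over $C$ is stationary. Freeness then fixes the field type of $\langle AB\rangle$ over $B$, and item (1) lifts it to $\mcL(\lambda)$. However, one step of the axiom verification fails and several are only asserted.

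\textbf{The failing step: Extension.} Kim--Pillay needs Extension over every base, and $\mathrm{Frac}(A\otimes_C B)$ does not exist in general. For example, with $A=B=C(\alpha)$ and $\alpha\in C^{s}\setminus C$, the tensor product $C(\alpha)\otimes_C C(\alpha)$ has zero divisors.
\begin{enumerate}
\item First pass to $\tilde C=A\cap C^{s}$. Since $A/C$ is separable, every element of $A$ algebraic over $C$ is separable over $C$. Hence $\tilde C$ is relatively algebraically closed in $A$, $A/\tilde C$ is regular, and $A\otimes_{\tilde C}B\tilde C$ is a domain.
\item Amalgamate over $\tilde C$, then transfer the conditions back to $C$. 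This uses $\tilde C=C\tilde C^{p}\subseteq CA^{p}$ and the fact that $B\tilde C$ has the same $p$-basis as $B$.
\item Justify the embedding of the amalgam $F$ into $M$, with $M/F$ separable, by quantifier elimination and saturation rather than by an unspecified type. The inclusion $B\subseteq F$ is an $\mcL(\lambda)$-embedding because $F/B$ is separable, and the $\lambda$-closed subfields of $M$ are exactly those over which $M$ is separable.
\end{enumerate}

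\textbf{Symmetry.} Write $\beta_C$ for a $p$-basis of $C$ and $\beta_{A/C}$ for a relative $p$-basis of $A$ over $C$. Your reformulation is false: joint $p$-independence of $\beta_{A/C}$ and $\beta_{B/C}$ does not imply $A\ind^{ACF}_C B$. Take $x\in\bigcap_n M^{p^n}$ transcendental over $C$, which exists by saturation, and $A=B=C(x^{1/p^\infty})$. Then $A=CA^p$, the relative $p$-basis is empty, and $M/A$ is separable, yet $A$ is not free from itself over $C$. Symmetry survives anyway. The regularity argument above shows that, under the standing separability hypotheses, $A\ind^{ACF}_C B$ already makes $\beta_C\cup\beta_{A/C}\cup\beta_{B/C}$ a $p$-basis of $\langle AB\rangle$. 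So the middle condition is in fact automatic.

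\textbf{Unproved axioms.} Local character, Transitivity and Base monotonicity are said to ``reduce to known facts'' without argument.
\begin{enumerate}
\item Transitivity and Base monotonicity need the observation that separability of $M/\langle AB\rangle$ makes $\langle AB\rangle$ $\lambda$-closed. This gives $\dcl_\lambda(AB)=\langle AB\rangle$ when $B$ is moved into the base.
\item Local character needs a genuine countable closure argument. The field $\dcl_\lambda(Ca)$ grows with $C$ and can have infinite transcendence degree over $C$ even for a finite tuple $a$.
\end{enumerate}
This is where most of the work of a self-contained proof lies.
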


\begin{lemma}
Let $P$ be a bounded PAC substructure of a model $M$ of the theory $T=SCF_{p, \infty}(\lambda)$. 
 Property (\nameref{Star}) holds for the pair $(M, P)$.
\end{lemma}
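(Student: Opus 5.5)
Fix a tuple $a$ in $P$ and a subset $C$ of $P$ that is relatively algebraically closed in $P$, i.e. $C=\acl(C)\cap P$. The plan is to show that $\tp(a/C)$ has a unique extension to $\acl(C)$; by Bartnick's result \cite{bartnick_2024} types over algebraically closed sets in $SCF_{p,\infty}(\lambda)$ are stationary, so this gives stationarity of $\tp(a/C)$. By Fact \ref{SeparableExtension}, $\dcl(C)$ is the field generated by $C$ closed under the $\lambda$-functions, and $\acl(C)=\dcl(C)^s$ is its separable closure. Since $P$ is definably closed, $C=\dcl(C)$ is a subfield of $P$ closed under $\lambda$-functions, so $M/C$ is separable. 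Moreover, $C$ relatively algebraically closed in $P$ means that $C$ is separably algebraically closed in the field $P$, i.e. $C^s\cap P=C$.

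We may enlarge $a$ to $A=\dcl(Ca)$, a $\lambda$-closed subfield of $P$, so that $M/A$ is separable. Let $\sigma\in\aut(\acl(C)/C)$, which is a field automorphism of $C^s$ over $C$. The key step is to show that $A$ and $\acl(C)$ are linearly disjoint over $C$, using $C^s\cap A=C$ together with the fact that $C^s/C$ is Galois. Once this is established, $\sigma$ extends to a field isomorphism $\tau$ of the composite $A C^s$ fixing $A$, by the standard Galois theory fact that $A\otimes_C C^s\cong AC^s$.

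Next we show that $\tau$ preserves $\mcL(\lambda)$-types. The composite $AC^s$ is separable over $A$, so $M/AC^s$ is separable: indeed, $AC^s$ is separable algebraic over $A$, and the $p$-independence of a $p$-basis of $A$ is preserved. Hence $\langle AC^s\rangle$ and its image under $\tau$ (which is the same field) are subfields over which $M$ is separable, and they have the same algebraic (field) type under $\tau$. By part (1) of Srour's fact, $\tau$ is then elementary in $\mcL(\lambda)$, so it extends to an automorphism of $M$. Therefore $(a,\acl(C))\equiv(a,\sigma(\acl(C)))$, which means every extension of $\tp(a/C)$ to $\acl(C)$ is conjugate to any other under $\aut(\acl(C)/C)$, forcing the extension to be unique.

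The main obstacle is the separability bookkeeping. First, we must check that the "algebraic type" in Srour's fact matches the quantifier-free ring type over a $\lambda$-closed field. Second, we must verify that $M/AC^s$ really is separable. Both should follow from Fact \ref{SeparableExtension}(2): a $p$-basis of $A$ remains $p$-independent in $M$, and separable algebraic extensions preserve $p$-bases (Fact \ref{SeparableExtension}(1)). Boundedness of $P$ is not needed for this argument beyond the standing assumptions.
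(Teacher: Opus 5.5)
Your proof is correct, but it is organized differently from the paper's.

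\textbf{The paper's route.} The paper proves stationarity directly. It fixes a $\dcl$-closed $B\supseteq C$ and two independent realizations $a\in P$ and $a'$ of $\tp(a/C)$, and shows $\tp_{ACF}(A/C)$ is stationary by placing $\cb_{ACF}(A/C)$ inside $A\cap\acl_{ACF}(C)\cap M\subseteq A\cap C^s=C$. It then passes from $\ind^\lambda$ to $\ind^{ACF}$ to get $A\equiv^{ACF}_B A'$. Finally it needs Srour's characterization of $\ind^\lambda$ (part (2)) to know that $M/\langle AB\rangle_{ACF}$ and $M/\langle A'B\rangle_{ACF}$ are separable, before applying part (1).

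\textbf{Your route.} You split $(\star)$ into two pieces:
\begin{enumerate}
\item $\tp(a/C)\vdash\tp(a/\acl(C))$;
\item Bartnick's stationarity over algebraically closed sets.
\end{enumerate}
You prove the first piece by Galois theory. Since $A\cap C^s=C$ and $C^s/C$ is Galois, $A$ and $C^s$ are linearly disjoint over $C$. Hence every $\sigma\in\mathrm{Gal}(C^s/C)$ extends to some $\tau$ on $AC^s$ fixing $A$. This $\tau$ is $\mcL(\lambda)$-elementary by Srour (1): a $p$-basis of $A$ remains a $p$-basis of the separable algebraic extension $AC^s$ and stays $p$-independent in $M$.

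\textbf{Comparison.} Both arguments rest on the same input, $A\cap C^s=C$. The paper uses it to locate the canonical base; you use it to obtain linear disjointness. Your route avoids the ACF canonical base and the forking characterization in Srour (2), at the price of importing Bartnick's theorem. That price is nil here, since the paper already relies on this theorem (TACSS) to apply Corollary \ref{cor:nExistencePAC} to $SCF_{p,\infty}(\lambda)$. Your decomposition also makes transparent that $(\star)$ amounts to TACSS plus a relative Galois-disjointness statement. The paper's proof, by contrast, establishes stationarity without appealing to Bartnick's result.
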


\begin{proof}
Let $C$ be a subset of $P$   which is definably closed and relatively algebraically closed in $P$.
Let $a$ in $P$ and $a'$ in $M$ be such that $\tp_{\lambda}(a/C)=\tp_{\lambda}(a'/C)$, and both independence $a\ind_C^{\lambda} B$ and $a'\ind_C^{\lambda} B$ hold, where $B$ is an $\mcL(\lambda)$-definably closed subset of $M$ containing $C$.
Let us show that $A=\dcl_{\lambda}(Ca)$ and $A'=\dcl_{\lambda}(Ca')$ have the same $\mcL(\lambda)$-type over $B$, as it will prove that $\tp_{\lambda}(a/C)$ is stationary.

\smallskip

We know that the field-theoretic canonical basis $\cb_{ACF}(A/C)$ of $A$ over $C$ in $\mcL$ is contained in both $\acl_{ACF}(C)$ and $\dcl_{ACF}(CA)=\dcl_{ACF}(A)$.
Moreover, the field-theoretic definable closure $\dcl_{ACF}(A)$ of $A$ is the perfect hull of $A$
Hence every element of $\dcl_{ACF}(A)$ is (field-theoretically) interdefinable with an element of $A$.
In particular, we may assume that $\cb_{ACF}(A/C)$ is contained in $A \cap \acl_{ACF}(C)$.

\noindent
Remark also that $A$ is contained in $P$, so we have
\begin{align*}
	A\cap\acl_{ACF}(C) & = A\cap \acl_{ACF}(C) \cap M  \\
				 & \sseq A\cap \acl_{\lambda}(C) \\
				 & = A\cap \acl_{\lambda} (C) \cap P\\ 
				 & \sseq A \cap C = C,
\end{align*}
i.e. the canonical basis $\cb_{ACF}(A/C)$ is contained in $C$, whence  $\tp_{ACF}(A/C)$ is stationary.
Now the independence $A\ind_C^{\lambda} B$ yields $A\ind_C^{ACF} B$, and the same holds for $A'$, so $A$ and $A'$ have the same algebraic type over $B$.
In particular, the composita $\langle AB\rangle_{ACF}$ and $\langle A'B\rangle_{ACF}$ have the same algebraic type.

\smallskip
\noindent
By the characterization of $\ind^\lambda$, we get that both $M/\langle AB\rangle_{ACF}$ and $M/\langle A'B\rangle_{ACF}$ are separable extensions, whence $\langle AB\rangle_{ACF}$ and $\langle A'B\rangle_{ACF}$ have the same $\mcL(\lambda)$-type.
It follows that $A$ and $A'$ have the same $\mcL(\lambda)$-type over $B$ in $M$, which ends the proof.
\end{proof}

\smallskip
\noindent
Finally, we can apply Corollary \ref{cor:nExistencePAC} to obtain the following result:

\begin{cor} \label{cor:namalgSCFinfiniPAC}
Any $\mcL(\lambda)$-structure which is bounded PAC in the theory $SCF_{p, \infty}$ has $n$-existence over existentially closed substucture containing an elementary substructure, for any integer $n$.
\hfill \qedsymbol
\end{cor}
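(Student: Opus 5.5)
The plan is to apply Corollary \ref{cor:nExistencePAC} with $T_0=SCF_{p,\infty}(\lambda)$. This requires checking each of its hypotheses for the pair $(M,P)$, where $P$ is a bounded PAC substructure of a saturated model $M$. The hypotheses are stability, QE, PACFO, TACSS, Property (\nameref{Star}), and $n$-existence of $T_0$ over $\acl_0(Q)$ for the relevant $Q$.

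The first hypotheses are already available in the text. Stability and quantifier elimination of $SCF_{p,\infty}(\lambda)$ are due to Delon. PACFO is the second item of Fact \ref{FirstOrderSCF}. TACSS is Bartnick's result. Property (\nameref{Star}) for $(M,P)$ is exactly the preceding lemma.

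Next I would handle the parameters. Corollary \ref{cor:nExistencePAC} is stated after naming an elementary substructure of $P$ in $\mcL_0$, so I would add constants for an elementary substructure $P_0\preccurlyeq P$ contained in $Q$. I need to check that this does not destroy the hypotheses. QE and stability are preserved. TACSS over sets containing $P_0$ is still TACSS. (\nameref{Star}) only concerns relatively algebraically closed sets, which will now contain $P_0$. I also need strong boundedness \ref{hyp:StrongBdd}. For this I would use Fact \ref{StrongBounded}, after checking that $\acl_0(P)=\dcl_0(P\acl_0(P_0))$; this follows from boundedness, using item (3) of Definition \ref{defBounded} applied to $(M_0,P_0)\preccurlyeq(M,P)$. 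I also need the interdefinability condition on $\dcl_0(P)$. Here $\dcl_0$ of a subfield closed under $\lambda$-functions is that field itself by Fact \ref{SeparableExtension}(3), and $P$ is definably closed.

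The remaining hypothesis is $n$-existence of $T_0$ over $\acl_0(Q)$, where $Q$ is existentially closed in $P$ and contains an elementary substructure. I would argue that $\acl_0(Q)$ contains an infinite $p$-independent family of $M$. Indeed $Q\supseteq P_0$, and $P_0$ is PAC by elementarity, so it contains such a family by the argument in the lemma on $\acl_M(P)$. Since $\acl_0(Q)$ is also closed under $\lambda$, Fact \ref{SeparableExtension} and QE make $\acl_0(Q)$ an elementary substructure of $M$. Then Corollary \ref{cor:nUniqu} gives $n$-existence over it.

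I expect the main obstacle to be the parameter bookkeeping. Specifically, I must make sure that naming $P_0$ keeps Bartnick's stationarity and the hypothesis of Fact \ref{StrongBounded}, and that existential closedness of $Q$ is taken in the expanded language. Once this is settled, Corollary \ref{cor:nExistencePAC} yields $n$-existence over $Q$ for every $n$.
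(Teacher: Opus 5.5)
Your proposal is correct and follows the paper's route: you apply Corollary \ref{cor:nExistencePAC} to $T_0=SCF_{p,\infty}(\lambda)$, taking PACFO from Fact \ref{FirstOrderSCF}(2), TACSS from Bartnick and (\nameref{Star}) from the preceding lemma, and you get $n$-existence over $\acl_0(Q)$ from Corollary \ref{cor:nUniqu}, since $\acl_0(Q)$ contains an infinite $p$-independent family and is therefore an elementary substructure of $M$. Two small points: your separate check of \ref{hyp:StrongBdd} is redundant, since Corollary \ref{cor:nExistencePAC} already supplies it through Fact \ref{fact:SummaryPAC}(1), and item (3) of Definition \ref{defBounded} would in any case need $P_0$ to be the $P$-part of an elementary substructure of the pair; also, rather than calling $P_0$ PAC (it need not be PAC in the sense of Definition \ref{defPAC}), just note that $P_0\equiv P$ has infinite imperfection degree and $M/P_0$ is separable, so Fact \ref{SeparableExtension}(2) gives the required infinite $p$-independent family in $M$.
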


We conclude by remarking that Corollaries \ref{SCFfinite} and \ref{cor:namalgSCFinfiniPAC} yield $n$-existence for all bounded pseudo-algebraically closed fields $k$.
Recall that if $k$ is perfect, then we already know that the theory of $k$ in the language of rings has $n$-existence for all $n$ by \cite{HrushovskiPFFieldsRelStructures}. 
We may assume that $k$ is $\aleph_0$-saturated by replacing it with an elementary extension.
Write $e$ its imperfection degree: the separable closure $k^s$ of $k$ is a separably closed field of imperfection degree $e$, i.e. $k^s\models SCF_{p,e}$, and the extension $k^s/k$ is separable.
By Fact \ref{SeparableExtension}, this shows that $k$ is a substructure of $k^s$ in the language of rings with $\lambda$-functions.

In the case where the imperfection degree $e$ is infinite, this implies that the expansion $(k, \lambda)$ of $k$ by the $\lambda$-functions of $k^s$ is a PAC-substructure of the theory $SCF_{p, \infty}$ by Fact \ref{FirstOrderSCF}.
In the case where $e$ is finite, the separability of $k^s/k$ means that we can choose a $p$-basis $b$ of $k^s$ in $k$, and then $(k,b, \lambda)$ is a PAC-substructure of $SCF_{p,e}$ by Fact \ref{FirstOrderSCF} again.

In both cases, we have found an expansion of $k$ by definable sets whose theory admits $n$-existence over existentially closed substructure containing an elementary substructure, therefore the same holds for the theory of $k$.

\begin{cor}
Bounded pseudo-algebraically closed fields have $n$-existence over existentially closed substructure containing an elementary substructure for all $n$.
\hfill \qedsymbol
\end{cor}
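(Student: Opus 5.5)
The plan is to reduce the statement to the two preceding corollaries by passing from the field $k$ to a definitional expansion in which $k$ becomes a bounded PAC substructure of a separably closed field.

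\textbf{Perfect case.} If $k$ is perfect, the result is Hrushovski's theorem from \cite{HrushovskiPFFieldsRelStructures}. Alternatively, $k$ is PAC in $k^s=k^{alg}$, which is a model of ACF; there we have $e=0$, so Corollary \ref{SCFfinite} applies.

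\textbf{Imperfect case, setting up.} Replace $k$ by a sufficiently saturated elementary extension. This is harmless, since $n$-existence over the relevant sets is a property of the complete theory, and boundedness is preserved under elementary equivalence by Definition \ref{defBounded}.

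Let $e=e(k)$. By Fact \ref{SeparableExtension}, $k^s/k$ is separable and $e(k^s)=e$, so $k^s\models SCF_{p,e}$. Separability means $k$ is closed under the $\lambda$-functions of $k^s$, so $k$ is an $\mcL(\lambda)$-substructure.

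\begin{itemize}
\item If $e<\infty$, choose a $p$-basis $b$ of $k^s$ inside $k$. Then $(k,b,\lambda)$ is a substructure of $k^s\models SCF_{p,e}(b,\lambda)$.
\item If $e=\infty$, use $(k,\lambda)$ inside $k^s\models SCF_{p,\infty}(\lambda)$.
\end{itemize}

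By Fact \ref{FirstOrderSCF}, $k$ being pseudo-algebraically closed of imperfection degree $e$ makes these structures PAC, using the saturation to match the paper's definition of PAC.

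\textbf{Boundedness.} Boundedness of $k$ as a field means $\mathrm{Gal}(k)$ has finitely many open subgroups of each index. This must be translated into boundedness in the sense of Definition \ref{defBounded}. The model-theoretic algebraic closure of $k$ in these theories is $k^s$ (Fact \ref{SeparableExtension}(3)), and since $k$ is definably closed in $k^s$ in $\mcL(\lambda)$, $\aut_0(\acl_0(k)/k)$ is the field-theoretic absolute Galois group. Condition (1) of Definition \ref{defBounded} then follows, because small Galois groups of elementarily equivalent fields have bounded size.

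\textbf{Applying the corollaries and descending.} Corollary \ref{SCFfinite} (for $e<\infty$) or Corollary \ref{cor:namalgSCFinfiniPAC} (for $e=\infty$) gives $n$-existence for the expanded theory over existentially closed substructures containing an elementary substructure.

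It remains to descend to the ring language. The added symbols are $\emptyset$-definable in $k$ for the $\lambda$-functions, and definable with parameters for $b$. The parameters $b$ can be placed in the elementary substructure, which is fine since the base sets contain one. Therefore $\acl$, types and nonforking agree between $k$ and its expansion. An algebraically closed amalgamation system in the ring language then lifts uniquely to one in the expansion, a completion there restricts to one in $k$, and existentially closed substructures correspond.

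\textbf{Main obstacle.} The delicate step is this last identification, together with the boundedness translation. In particular, one must check that "existentially closed substructure" in the ring language agrees with the notion in $\mcL(\lambda)$ (resp. $\mcL(b,\lambda)$). I would handle this by noting that an existentially closed subfield of $k$ is relatively separably closed in $k$, hence closed under the $\lambda$-functions. Its $\lambda$-expansion is then existentially closed as well, since the $\lambda$-functions are existentially definable in the field language.
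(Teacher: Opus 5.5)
Your proposal follows the paper's argument essentially step by step: Hrushovski for perfect $k$, pass to a saturated elementary extension, use separability of $k^s/k$ to view $k$ as an $\mcL(\lambda)$- (resp.\ $\mcL(b,\lambda)$-) substructure of $k^s$, get PAC from Fact~\ref{FirstOrderSCF}, apply Corollary~\ref{SCFfinite} or~\ref{cor:namalgSCFinfiniPAC}, and descend through a definitional expansion; your checks of boundedness and of the base sets spell out points the paper leaves implicit. Some of your side justifications need repair:
\begin{itemize}
\item Relative (even algebraic) closedness does not give closure under the $\lambda$-functions; the function field of $y^p=a+bx^p$ over $\mathbb{F}_p(a,b)$ is a counterexample. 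Instead, get $\lambda$-closedness of an existentially closed $Q$ by applying existential closedness to $\exists\bar y\,(a=\sum_\alpha y_\alpha^p t^\alpha)$ and using uniqueness of the representation.
\item Existential definability of the $\lambda$-graphs is clear only for finite $e$, with $b$ as parameters. For $e=\infty$ you can instead note that such a $Q\supseteq k_0$ is a PAC field of the same imperfection degree with $k/Q$ regular. Moreover, restriction $\mathrm{Gal}(k)\to\mathrm{Gal}(Q)$ is surjective by regularity and injective because restriction to $\mathrm{Gal}(k_0)$ is already an isomorphism by boundedness. Hence $Q\preceq k$ by the elementary equivalence theorem for PAC fields.
\item The ``$e=0$'' alternative for perfect $k$ only makes sense in characteristic $p$.
\end{itemize}
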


\section{Fields with operators} \label{sec:FieldsOperators}

The goal of this section is to apply Theorem \ref{thm:AbstractAmalgamationTransfer} to various theories of algebraically closed fields with operators.
The proof we present here is a direct generalization of the proof of $n$-existence in \cite{Chatzidakis_1999}.
We fix in this section a complete stable theory $T_0$  with elimination of quantifiers and such that types over algebraically closed sets are stationary.
In particular, the theory $T_0$ has $n$-uniqueness over existentially closed substructures for all $n$ by Corollary \ref{cor:nUniqu}.

Define a new language $\mcL_1=\mcL_0\cup \mcL'$, where $\mcL'$ is a language containing only new unary function symbols.
Assume that $T_1$ is a complete and model-complete simple expansion of $T_0$ to $\mcL_1$ such that for every symbol $h$ in $\mcL'$ and every $\mcL_0$ term $t(x_1,...,x_n)$, there are $\mcL'$-terms 
$t_1^{h,t}(x_{i_1}),...,t_m^{h,t}(x_{i_m})$ and an $\mcL_0$-term $s^{h,t}(y_1,...,y_m)$ such that $T_1$ implies
\[
	\forall x_1,...,x_n, h(t(x_1,..., x_n)) = s^{h,t}(t_1^{h,t}(x_{i_1}),...,t_m^{h,t}(x_{i_m}))
\]

\begin{example}
If $a$ and $b$ are elements of a differentially closed field $(K, \delta)$, then we have $\delta(a+b)=\delta(a)+\delta(b)$ and $\delta(ab)= a\delta(b) +\delta(a)b$.
It follows that for every polynomial $P(x_1,...x_n)$, there is a polynomial $Q(x_1,...x_n,y_1,...y_n)$ such that
$$
	\forall x_1,...,x_n, \delta(P(x_1,...x_n) = Q(x_1,...x_n, \delta(x_1),...,\delta(x_n)).
$$

\end{example}

Fix a $\kappa$-saturated model $N$ of $T_1$ for $\kappa > |T_1|$; we denote by $M$ the restriction of $N$ to $\mcL_0$. 
Note that $M$ is a $\kappa$-saturated model of $T_0$.
We make the following assumptions on small subsets of $N$:
\begin{enumerate}
	\item \label{operators:AlgClos} If $A$ is a $\mcL'$-substructure of $N$, then $\acl_1(A)$ coincide with $\acl_0(A)$.
	\item \label{operators:CaracIndep} If $A,B$ and $C$ are $\mcL'$-substructures on $N$ such that $C\sseq A\cap B$, then we have 
	\[
		A\ind_C^1 B \Leftrightarrow A\ind_C^0 B,
	\]
	where $\ind^1$ is the nonforking independence in the simple theory $T_1$ and $\ind^0$  the nonforking independence in the stable theory $T_0$.
	\item \label{operators:SameType} If $a$ and $b$ are algebraically closed in $T_1$ and have the same quantifier-free $\mcL_1$-type, then they have the same type in $T_1$.
	\item \label{operators:ModelCompletion}
		The theory $T_1$ is the (unique) model-completion of its universal part $T_1^\forall$.
		Recall that $T_1^\forall$ is the common theory of all substructures of $N$. 
	
	\item \label{operators:ExtensionAmalgamation} Let $C \sseq A$ be substructures of $N$ such that $A\cap B = C$, where $B=\acl_0(C)$.
	If $(B,H)$ is an $\mcL'$-structure on $B$ (not necessarily the one induced by $N$) such that $(B, H)$ is a model of $T_1^\forall$ and an extension of $C$ (as an $\mcL_1$-structure), then there exists an $\mcL'$-structure  $(\langle AB\rangle_0, H')$ on the generated $\mcL_0$-substructure $\langle AB\rangle_0$ extending both $A$ and $(B,H)$ and such that $(\langle AB \rangle_0, H')$ is a model of $T_1^\forall$.
\end{enumerate}

\begin{remark}
\begin{itemize}
	\item If a tuple $a$ of $N$ is closed under $\mcL'$, then so is $\langle a \rangle_0$. 
	In particular, the generated substructures $\langle a \rangle_0$ and $\langle a \rangle_1$ coincide.
	
	\item If $a$ and $b$ are $\mcL'$-closed tuples of $N$ such that $\langle a \rangle_0$ and $\langle b \rangle_0$ have the same quantifier-free $\mcL_0$-type and $a$ and $b$ have the same quantifier-free $\mcL'$-type, then $\langle a \rangle_0$ and $\langle b \rangle_0$ have the same quantifier-free $\mcL_1$-type.
	
	\item Assume that $T_0$ is a theory of separably closed fields with the corresponding $\lambda$-functions and the multiplicative inverse.
	If $C\sseq A$ are substructures of $N$, then they are closed under the $\lambda$-functions of $M$. 
	In particular the extension $M/C$ is separable, whence so is the extension $A/C$.
	If moreover $A\cap C^{sep}$ equals  $C$, then the extension $A/C$  is regular and hence $A$ and $B$ are linearly disjoint over $C$.

It follows that proving (\ref{operators:ExtensionAmalgamation}) reduces to showing that if $A$ and $B$ are models of $T_1^\forall$ which are linearly disjoint over $C$, then the $\mcL'$-structure on $A\cup B$ extends to the composite field $AB$ in such a way that $AB$ is a model of $T_1^\forall$ (which then embeds into $N$ by (\ref{operators:ModelCompletion})): this is satisfied in all the examples below.
\end{itemize}
\end{remark}

\begin{example} \label{ex:ListOfFieldsWithOperators}
This context contains several already known and well-studied theories of fields.
In the case where $T_0$ is the theory of algebraically closed fields of characteristic $p$ in the language $\mcL_0$ of rings with the multilplicative inverse,  the following theories meet the required hypotheses: 
\begin{itemize}[-]
	\item In characteristic zero, the theory $DCF_{0,m}$ of differentially closed fields with $m$ commuting derivations.
	
	\item Any completion of the theory $ACF_pA$ of algebraically closed fields in characteristic $p$ with a generic automorphism \cite{ChatzidakisHrushovski_1999}, in the language $\mcL_0$ augmented with symbols for the automorphism and its inverse.

	\item In characteristic zero, any completion of the theory $DCF_0A$ of differentially closed fields with a generic automorphism commuting with the derivation \cite{Medina_2007}, after fixing parameters for the algebraic closure of the empty set.
	
	\item In characteristic zero, completions of theories of algebraically closed fields with free operators $\mathcal{D}$-$CF_0$, as described in \cite{MoosaScanlon_2014}, after fixing parameters for the algebraic closure of the prime field.
\end{itemize}

There are also examples in wich the theory $T_1$ is an expansion of the theory $T_0=SCF_{p, e}$ of (imperfect) separably closed fields of imperfection degree $e$, in the language $\mcL_0$ of rings (possibly with $\lambda$-functions, and with a $p$-basis if $e$ is finite).
Among others:
\begin{itemize}[-]
	\item The theory $DCF_p$ of differentially closed fields of characteristic $p$ (if the imperfection degree is infinite) \cite{Wood_1976}.
	
	\item Any completion of the theory $SCF_{p,e}A$ of separably closed fields with a generic automorphism \cite{Chatzidakis_2001}, after fixing parameters for the algebraic closure of the empty set.
\end{itemize}

Note that the theories $ACF_pA$, $DCF_0A$, $SCF_{p,e}A$ and $\mathcal{D}$-$CF_0$ are model-complete but not complete. 
In particular, if $T_1$ is some completion of one of these theories, then we add a set of parameters to ensure that $T_1$ is the model completion of the common theory of its substructures.
\end{example}

\medskip

Define $\mcG$ to be the class of all small $\mcL'$-substructures of $N$, and let us check that hypotheses \ref{hyp:StrongBdd}-\ref{hyp:ContinuityFSystems} are satisfied. Note that \ref{hyp:StrongBdd} and \ref{hyp:CoheirLemmaLight} are trivial, as the reducts to $\mcL_0$ models of $T_1$ are models of $T_0$.
We also see that \ref{hyp:InteractionAlgClosure} is  a consequence of the definition of $\mcG$ and the assumption (\ref{operators:AlgClos}).
Moreover, for all $\mcL'$-closed sets $a$ and $b$, their union $ab$ is also $\mcL'$-closed since $\mcL'$ contains only unary function symbols.
It then follows from the assumption (\ref{operators:CaracIndep}) that \ref{hyp:CaracIndep} holds.

To see that \ref{hyp:SameType} holds, remark that if $a$ and $b$ have the same type in $T_0$ and $b$ is algebraically closed in $T_1$, then $b$ is in particular algebraically closed in $T_0$ and therefore so is $a$.
If moreover $a$ is $\mcL'$-closed and has the same quantifier-free $\mcL'$-type as $b$, then $a$ is algebraically closed in $T_1$ by the assumption (\ref{operators:AlgClos}), and has the same quantifier-free $\mcL_1$-type as $b$.
It follows by (\ref{operators:SameType}) that $a$ and $b$ have the same type in $T_1$, which proves \ref{hyp:SameType}.

Note that by definition of $\mcG$, fair types are exactly annotated types $(p(x),I(x))$ such that $p(x)\cup I(x)$ is consistent. 
It follows that every fair system is strongly fair.
In order to prove \ref{hyp:ContinuityFSystems}, it only remains to show that any minimal completion of an algebraically closed strongly fair system is a fair type:

\begin{lemma}
Let $(p_w(x_w), I_w)_{ w\in W}$ be an algebraically closed strongly fair system over a set $Q$ which is $\mcL_0$-existentially closed in $M$.
For any minimal completion $p_\top(x_\top)$ of $(p_w(x_w), w\in W)$, the pair $(p_\top, I_\top)$ is fair.
\end{lemma}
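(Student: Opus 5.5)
We have to show that $p_\top(x_\top)\cup I_\top(x_\top)$ is consistent, where $I_\top=\bigcup_{w\in W}I_w$. The plan is to build the $\mcL'$-structure on a realization of $p_\top$ by induction, following the same filtration as in the proof of Proposition \ref{prop:RelNUniq}. We then use assumption (\ref{operators:ModelCompletion}) to embed the result into $N$.

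\textbf{Setup.} We may take $W=\mathfrak{P}([n])^-$. Fix a realization $a_\top$ of $p_\top$ in $M$, and write $a_V=\bigcup_{v\in V}a_v$ for $V\subseteq W$.
\begin{itemize}
\item Each $(p_w,I_w)$ is fair, so $p_w\cup I_w$ has a realization $b_w$ in $N$. The $\mcL'$-structure $I_w$ transported to $a_w$ along an $\mcL_0$-isomorphism $b_w\mapsto a_w$ makes $\langle a_w\rangle_0$ into a model of $T_1^\forall$.
\item Since the system is algebraically closed, $a_w=\acl_0(Q a_i,i\in w)$.
\item Since the annotations are compatible, these structures agree on intersections $a_v\cap a_w=a_{v\cap w}$.
\end{itemize}

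\textbf{Induction.} Using the notation $\Delta_k$ and $\Gamma_k$ of Proposition \ref{prop:RelNUniq}, we prove by induction on $k$ that the substructure $\langle a_{\Delta_k}\rangle_0$ carries an $\mcL'$-structure which is a model of $T_1^\forall$ and extends all the $I_w$, $w\in\Delta_k$. The inductive step adds $a_{[n]\setminus\{k\}}$ to $a_{\Delta_k}$. We apply assumption (\ref{operators:ExtensionAmalgamation}) with
\[
C=\langle a_{\Gamma_k}\rangle_0 ,\qquad A=\langle a_{\Delta_k}\rangle_0 ,\qquad B=a_{[n]\setminus\{k\}}=\acl_0(C).
\]
The equality $B=\acl_0(C)$ holds by controlled character, since $k\ge 3$. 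The structure $(B,H)$ is the one given by $I_{[n]\setminus\{k\}}$, which extends the structure on $C$ by compatibility. The amalgamation then produces a structure on $\langle AB\rangle_0=\langle a_{\Delta_{k+1}}\rangle_0$ which is a model of $T_1^\forall$. The base case is handled in the same way, amalgamating $a_{\{1,3,\dots,n\}}$ and $a_{\{2,3,\dots,n\}}$ over $a_{\{3,\dots,n\}}$.

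At the end, $\langle a_\top\rangle_0$ is a model of $T_1^\forall$ whose $\mcL'$-structure extends $I_\top$. By (\ref{operators:ModelCompletion}) it embeds into $N$ over $Q$, so that the $\mcL_0$-type over $Q$ is preserved. To preserve the type over $Q$ we include $Q$ in the construction, and we may need $T_1$ to have amalgamation over $Q$; this follows from model-completeness together with $Q$ being an $\mcL_1$-substructure. The image of $a_\top$ then realizes $p_\top\cup I_\top$, which is exactly fairness.

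\textbf{Main obstacle.} The hardest point is verifying the hypothesis $A\cap B=C$ of (\ref{operators:ExtensionAmalgamation}), that is,
\[
\langle a_{\Delta_k}\rangle_0\cap a_{[n]\setminus\{k\}}=\langle a_{\Gamma_k}\rangle_0 .
\]
This is where $n$-uniqueness, and more specifically Lemma \ref{lemma:SharpEntailment}, enters. That lemma applies with $P=M$ and with $Q$ existentially closed. It gives that $\tp_0(a_{[n]\setminus\{k\}}/a_{\Gamma_k})$ entails $\tp_0(a_{[n]\setminus\{k\}}/a_{\Delta_k})$.

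Suppose an element $c\in a_{[n]\setminus\{k\}}$ lies in $\dcl_0(a_{\Delta_k})$. The entailment then forces $c$ to have a unique realization of its type over $a_{\Gamma_k}$. Hence $c\in\dcl_0(a_{\Gamma_k})$, which by QE is the structure generated by $a_{\Gamma_k}$, modulo issues of $\dcl$ versus generated substructure. The same entailment also gives uniqueness of the $\mcL_0$-type of the amalgam, so the embedding into $N$ respects $p_\top$.

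If $\dcl_0$ is strictly larger than the generated substructure, I would first replace each piece by its $\dcl_0$-closure. This uses that $\mcL'$ consists of unary functions which commute with terms, as in the displayed identities $h(t(\bar x))=s^{h,t}(\dots)$, so the $\mcL'$-structure extends uniquely to the generated and definably closed structures.
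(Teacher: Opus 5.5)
Your inductive step is the paper's argument. It uses the same filtration $\Delta_k,\Gamma_k$ and the same instance of (\ref{operators:ExtensionAmalgamation}), namely $C=\langle a_{\Gamma_k}\rangle_0$, $A=\langle a_{\Delta_k}\rangle_0$, $B=a_{[n]\setminus\{k\}}=\acl_0(C)$. It gets $A\cap B=C$ from Lemma \ref{lemma:SharpEntailment} and embeds into $N$ via (\ref{operators:ModelCompletion}).

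\textbf{The gap: the base case.} You say it is handled in the same way, amalgamating $a_{\{1,3,\dots,n\}}$ and $a_{\{2,\dots,n\}}$ over $a_{\{3,\dots,n\}}$. But (\ref{operators:ExtensionAmalgamation}) only adjoins $B=\acl_0(C)$. Here $C=a_{\{3,\dots,n\}}$ is already algebraically closed, so the hypothesis is vacuous. The set $a_{\{1,3,\dots,n\}}$ is not algebraic over $a_{\{3,\dots,n\}}$; it is $\ind^0$-independent from $a_{\{2,\dots,n\}}$ over it. So this step is a genuine independent amalgamation, and for $n=2$ it is the entire lemma.

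Nothing in (\ref{operators:ModelCompletion})--(\ref{operators:ExtensionAmalgamation}) controls the $\mcL_0$-type of such an amalgam. Amalgamation for $T_1^\forall$ gives some common extension, but not one whose $\mcL_0$-type is $p_{\Delta_3}$. The missing ingredient is extension in the simple theory $T_1$, which your argument never uses:
\begin{itemize}
\item Realize $p_{\{2,\dots,n\}}\cup I_{\{2,\dots,n\}}$ and $p_{\{1,3,\dots,n\}}\cup I_{\{1,3,\dots,n\}}$ in $N$.
\item Identify their $\{3,\dots,n\}$-parts. Both parts are $T_1$-algebraically closed by (\ref{operators:AlgClos}) and have the same quantifier-free $\mcL_1$-type, so (\ref{operators:SameType}) applies.
\item Move the second realization to be $\ind^1$-independent from the first over this common part, and pass to $\ind^0$ by (\ref{operators:CaracIndep}).
\item Conclude by stationarity of $T_0$-types over algebraically closed sets that the union realizes $p_{\Delta_3}\cup I_{\Delta_3}$.
\end{itemize}
This is the paper's base case.

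\textbf{Secondary points.}
\begin{itemize}
\item Hypothesis (\ref{operators:ExtensionAmalgamation}) is stated for substructures $C\sseq A$ of $N$ with their induced structure. Your $A$ carries an abstractly built structure. The paper avoids this by re-realizing $p_{\Delta_k}\cup I_{\Delta_k}$ inside $N$ at every stage, and it fixes $Q$ via (\ref{operators:SameType}). Your deferred embedding is fine if you embed each stage into $N$ over $Q$ before applying (\ref{operators:ExtensionAmalgamation}).
\item Embedding over $Q$ comes from the model-completion hypothesis (\ref{operators:ModelCompletion}) plus saturation. Model-completeness alone gives no amalgamation over $Q$.
\item The identities $h(t(\bar x))=s^{h,t}(\dots)$ only show that the $\mcL_0$-substructure generated by an $\mcL'$-closed set is $\mcL'$-closed. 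They do not extend an $\mcL'$-structure to a $\dcl_0$-closure, so your fallback for $\dcl_0\neq\langle\cdot\rangle_0$ is unjustified. The paper simply asserts that these generated substructures are definably closed in $T_0$.
\end{itemize}
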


\begin{proof}
Let $a_\top$ be a realization of $p_\top(x_\top)$.
Keeping the notations of the proof of Proposition \ref{prop:RelNUniq}, we set $\Delta_k= \{w\in W| [k,n] \sseq w \text{ and } |w|=n-1\}$ and $\Gamma_k= \{ w\in W| [k+1,n] \sseq w, k\notin w \text{ and } |w|=n-2 \}$.
Define $p_{\Delta_k}= \tp(a_{\Delta_k}/Q)$; let us show by induction on $k$ that $p_{\Delta_k}\cup I_{\Delta_k}$ has a realization $b_{\Delta_k}$ belonging to $\mcG$.

\smallskip

The case $k=3$ is a simple application of extension in the simple theory $T_1$, stationarity over $Q$ in $T_0$ and assumption (\ref{operators:SameType}).
Let now $b_\top$ be a realization of $p_\top$ such that $b_{\Delta_k}$ realizes $I_{\Delta_k}$ for $k\geq 3$. 
Let  $c_{[n]\setminus \{k\}}$ be a realization of $p_{[n]\setminus \{k\}}\cup I_{[n]\setminus \{k\}}$. 
Then both $\tp_0(b_{\Gamma_k}/Q)$ and $\tp_0(c_{\Gamma_k}/Q)$ are minimal completions of the $(n-1)$-amalgamation system $(p_v, v\subsetneq [n]\setminus \{k\})$, so they are equal since Corollary \ref{cor:nUniqu} yields $(n-1)$-uniqueness over $Q$.
We also know that $b_{\Gamma_k}$ and $c_{\Gamma_k}$ have the same quantifier-free $\mcL'$-type by definition of a strongly fair system. 
It follows that $\langle b_{\Gamma_k} \rangle_1$ and $\langle c_{\Gamma_k}\rangle_1$ have the same quantifier-free $\mcL_1$-type over $Q$; let $f: \langle c_{\Gamma_k} \rangle_1 \to \langle b_{\Gamma_k} \rangle_1$ be the associated $ \mcL_1$-isomorphism.

Note that $f$ extends to an $\mcL_0$-isomorphism $\tilde{f}$ from $c_{[n]\setminus \{k\}}$ to $b_{[n]\setminus \{k\}}=\acl_0(\langle b_{\Gamma_k} \rangle_1)$.
Consider the $\mcL_1$-structure $b_{[n]\setminus \{k\}}'$ on $b_{[n]\setminus \{k\}}$ obtained by by transfering the $\mcL_1$-structure on $c_{[n]\setminus \{k\}}$ via $\tilde{f}$.
Note that $b_{[n]\setminus \{k\}}'$ is not necessarily a substructure of $N$ but is a model of $T_1^\forall$.
The structures $\langle b_{\Delta_k} \rangle_0$ and $\langle b_{\Gamma_k} \rangle_0$ are also models of $T_1^\forall$, and they are both definably closed in $T_0$.
Moreover, Lemma \ref{lemma:SharpEntailment} yields that $\langle b_{\Gamma_k} \rangle_0$ is relatively algebraicaly closed in $\langle b_{\Delta_k} \rangle_0$.
It follows by assumption (\ref{operators:ExtensionAmalgamation}) that $b_{[n]\setminus \{k\}}'\langle b_{\Delta_k} \rangle_0$ embeds into a model $D$ of $T_1^\forall$, which in turns embeds into $N$ since $T_1$ is the unique model-completion of $T_1^\forall$.

Let $j:D \to N$ be this embedding.
Since $Q$ is algebraically closed, we may assume by (\ref{operators:SameType}) that it is fixed by $j$.
The map $j$ is an $\mcL_0$-isomorphism, therefore $j(b_{[n]\setminus \{k\}}'b_{\Delta_k})$ has the same $\mcL_0$-type as $b_{[n]\setminus \{k\}}'b_{\Delta_k}=b_{\Delta_{k+1}}$ by quantifier elimination.
Moreover $j(b_{[n]\setminus \{k\}}')$ and $j(b_{\Delta_k})$ have the same quantifier-free $\mcL'$-type as $b_{[n]\setminus \{k\}}'$ and $b_{\Delta_k}$ respectively; it follows that $j(b_{[n]\setminus \{k\}}'b_{\Delta_k})$ realizes $p_{\Delta_{k+1}}I_{\Delta_{k+1}}$.
\end{proof}

\begin{cor}
Let $T_0$ be a stable theory which eliminates quantifiers and such that types over algebraically closed sets are stationary.
Assume that $T_1$ is as described in this subsection and satisfies the assumptions (\ref{operators:AlgClos})-(\ref{operators:ExtensionAmalgamation}). 
The theory $T_1$ has $n$-existence for all integer $n$ over algebraically closed substructure which are existentially closed in $T_0$.
\end{cor}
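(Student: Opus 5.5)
The plan is to apply Theorem \ref{thm:AbstractAmalgamationTransfer} with $T_0$ and $T_1$ as in this section, $N$ the saturated model of $T_1$, $M$ its $\mcL_0$-reduct, and $\mcG$ the class of small $\mcL'$-substructures of $N$. We fix a subset $Q$ of $N$ that is algebraically closed in $T_1$ and $\mcL_0$-existentially closed in $M$. Two things must then be checked: that hypotheses \ref{hyp:StrongBdd}--\ref{hyp:ContinuityFSystems} hold over $Q$, and that $T_0$ has $n$-existence over $\acl_0(Q)$.

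\textbf{Existence in $T_0$.} By assumption (\ref{operators:AlgClos}), $Q$ (which is $\mcL'$-closed, being $\acl_1$-closed) satisfies $\acl_0(Q)=\acl_1(Q)=Q$. So $\acl_0(Q)=Q$, and this set is existentially closed in $M$. Corollary \ref{cor:nUniqu} then gives $n$-uniqueness and $n$-existence for $T_0$ over $Q$, for every $n$.

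\textbf{Hypotheses H1--H5.} These were verified in the discussion preceding the lemma:
\begin{itemize}
\item[] \ref{hyp:StrongBdd} and \ref{hyp:CoheirLemmaLight} hold trivially, since $N\restriction\mcL_0=M$.
\item[] \ref{hyp:InteractionAlgClosure} follows from (\ref{operators:AlgClos}).
\item[] \ref{hyp:CaracIndep} follows from (\ref{operators:CaracIndep}) together with closure of $\mcG$ under unions, which holds because $\mcL'$ consists of unary function symbols.
\item[] \ref{hyp:SameType} follows from (\ref{operators:AlgClos}) and (\ref{operators:SameType}).
\end{itemize}

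\textbf{Hypothesis H6.} Since $\mcG$ consists of all $\mcL'$-substructures, step $(*)$ is automatic by Remark \ref{rk:ContinuityNonforking}: an $\mcL'$-closed realization has $\mcL'$-closed subtuples $a_{w_1}\dots a_{w_m}$. Step $(**)$ is exactly the preceding lemma, applied over $Q$, which is $\mcL_0$-existentially closed. Combining the two, every minimal completion of a relatively algebraically closed strongly fair system is strongly fair, so \ref{hyp:ContinuityFSystems} holds. Theorem \ref{thm:AbstractAmalgamationTransfer} now yields $n$-existence for $T_1$ over $Q$.

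\textbf{Main obstacle.} The delicate point is that the lemma and Corollary \ref{cor:nUniqu} are used over $Q$ itself, while the theorem needs $n$-existence over $\acl_0(Q)$. This is why the identification $\acl_0(Q)=Q$ above must be justified. It rests on $Q$ being an $\mcL'$-substructure, which holds because $\acl_1$-closed sets are $\dcl_1$-closed and hence closed under the unary functions.

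A second subtlety is that \ref{hyp:ContinuityFSystems} is stated for arbitrary systems, whereas the theorem's proof only applies it to systems over $Q$. So it suffices to verify it over $Q$, which is what the lemma provides.
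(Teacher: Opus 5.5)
Your proposal is correct and follows essentially the paper's route. You check H1--H5 as in the discussion preceding the lemma, take H6 from that lemma (step $(*)$ being automatic because $\mcG$ is the class of all $\mcL'$-substructures), get $n$-existence of $T_0$ over $Q=\acl_0(Q)$ from Corollary \ref{cor:nUniqu} (your justification of $\acl_0(Q)=\acl_1(Q)=Q$ via assumption (\ref{operators:AlgClos}) makes explicit a step the paper leaves implicit), and then apply Theorem \ref{thm:AbstractAmalgamationTransfer}.
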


In particular, all the theories of example \ref{ex:ListOfFieldsWithOperators} have $n$-existence over algebraically closed substructure, for all $n$.

\smallskip

\section{Pairs of structures} 
\label{sec:PairsOfStructures}

\subsection{Structures with a notion of $P$-independence}  \label{sec:StructurePIndependence}
In this subsection, we use Theorem \ref{thm:AbstractAmalgamationTransfer} to obtain a general result of transfer of $n$-existence for pairs of structures (Corollary \ref{cor:nExistPairs}).
We then apply this result to well-known theories of pairs, namely that of lovely pairs, $H$-structures and bounded PAC beautiful pairs.

We assume here that $T_0$ is a simple theory with quantifier elimination in a language $\mcL_0$, and that $T_1$ is a simple expansion of $T_0$ to the language $\mcL_1$ obtained by adding a single unary predicate $P$ to $\mcL_0$.

We fix a $\kappa$-saturated model $N=(M,P_M)$ of $T_1$.
For any subset $A$ of $N$, we denote by $P_A$ its intersection with $P_M$.

\begin{definition}  \label{def:PIndependentSet}
A subset $A$ is said to be $P$\textit{-independent} if $A$ is $\ind^0$-independent from $P_M$ over $P_A$.
Take $\mcG$ to be the class of all small $P$-independent subsets.
	Remark that if an element $A$ of $\mcG$ is algebraically closed in $T_0$, then $P_A$ is relatively algebraically closed in $P_M$.
\end{definition}

Since the language $\mcL'$ is composed of a single unary predicate, an annotated type is simply a pair $(p(x), \opart{x})$, where $\opart{x}$ is a subtuple of $x$; it is fair if $p$ has a $P$-independent realization $a$ such that $\opart{a}:= a\rest \opart x$ coincides with $P_a$.

\begin{notation}
Given an annotated type $(p(x), \opart x)$ over $C$, denote $\opart{p}(\opart{x})$ its restriction to the variables $\opart x$, and $\widehat{p}(\opart x)$ the restriction of $\opart{p}(\opart{x})$ to the parameters $P_C$.
In other words, if $a$ realizes $p$, then $\opart{p}(\opart{x})=\tp_0(\opart a/ C)$ and $\widehat{p}(\opart x) = \tp_0(\opart a/ P_C)$.

Given a subset $D$ of $P_M$, let $\mcC(D)$ denote the set of $\mcL_0$-types over $D$ with a realization in $P$.
We say that $\mcC(D)$ is closed under relatively algebraically closed amalgamation if whenever $q_\top$ is a minimal completion to an amalgamation system which is algebraically closed relatively to $P$, then $q_\top$ has a realization in $P$.
\end{notation}

\noindent
We additionally make the following assumptions:
\begin{enumerate}
	\item \label{pairs:AlgClosure}
	A set which is algebraically closed in $T_1$ is $P$-independent, and the algebraic closure of a $P$-independent set in $T_1$ coincide with its algebraic closure in $T_0$.
	
	 \item \label{pairs:CaracIndep}
	 For every $P$-independent $a, b$ and $c$ with $c\sseq a\cap b$, we have
	 \[
	 	a\ind^1_c b \Leftrightarrow a \ind_c^0 b \text{ and } ab \text{ is } P\text{-independent.}
	 \]
	 
	 \item \label{pairs:RelAlgClos}
	 For all tuples $a$ and $b$ in $P_M$ with the same type in $T_0$, if $b$ is relatively algebraically closed in $P_M$, then so is $a$.
	 
	 \item \label{pairs:SameType}
	 If $a$ and $b$ are algebraically closed in $T_1$ and with the same quantifier-free $\mcL_1$-type, then they have the same type in $T_1$.

	 \item \label{pairs:PIndependentRealizations}
	 Given an algebraically closed $P$-independent set $Q$ and an annotated type $(p(x), \opart{x})$, if $\widehat{p}(\opart{x})$ has a realization in $P$ and $\opart{p}(\opart{x})$ does not fork over $P_Q$, then $p(x)$ has a realization $a$ which is independent from $P$ over $\opart a$ and such $\opart a$ is contained in $P$. 
\end{enumerate}

We now check that these assumptions are enough for the hypotheses of Theorem \ref{thm:AbstractAmalgamationTransfer} to hold.
Note that \ref{hyp:StrongBdd} and \ref{hyp:CoheirLemmaLight} are immediate since the reducts to $\mcL_0$ of models of $T_1$ are models of $T_0$.
Moreover, the hypotheses  \ref{hyp:InteractionAlgClosure} and \ref{hyp:CaracIndep} are is immediately equivalent to the assumptions (\ref{pairs:AlgClosure}) and (\ref{pairs:CaracIndep}) respectively.
Remark also that the assumptions (\ref{pairs:AlgClosure}) and (\ref{pairs:SameType}) together yield \ref{hyp:SameType}.

\medskip
We now prove that \ref{hyp:ContinuityFSystems} holds.
We apply the strategy explained in Subsection \ref{sec:VerifHyp} with the following two lemmas:

\begin{lemma}
Let $(p_w(x_w), \opart{x_w})_{ w\in W}$ be an algebraically closed strongly fair amalgamation system over a $P$-independent set $Q$.
Then $(\opart{p_w}(\opart{x_w}), w\in W)$ is an amalgamation system over $Q$.
\end{lemma}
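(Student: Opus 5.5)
We need to check the axioms of Definition \ref{AmalgSyst} for the family $(\opart{p_w}(\opart{x_w}), w\in W)$ over $Q$. The plan is to fix, for each $w$, a realization $a_w$ of $p_w$ witnessing the strongly fair property. Then $a_w$ is $P$-independent, $\opart{a_w}=P_{a_w}$, and every union $a_{v_1}\dots a_{v_m}$ with $v_j\sseq w$ is $P$-independent. We then read off each axiom from $\tp_0(\opart{a_w}/Q)=\opart{p_w}$.

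\textbf{Axioms (1) and (2).} Distinctness of variables is inherited from $p_w$. For compatibility, annotated systems satisfy $\opart{x_v}\cap\opart{x_w}=\opart{x_{v\cap w}}$, since $I_v\cap I_w=I_{v\cap w}$. Moreover, for $v\sseq w$, the restriction $a_w\rest x_v$ realizes $p_v$ and has $P$-part exactly $a_w\rest\opart{x_v}$. Hence $\opart{p_w}\rest\opart{x_v}=\opart{p_v}$.

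\textbf{Independence.} For $i\in w$ write $a_i=a_w\rest x_{\{i\}}$ and $\opart{a_i}=P_{a_i}$. The family $(a_i)_{i\in w}$ is $\ind^0$-independent over $Q$, so by monotonicity $(\opart{a_i})_{i\in w}$ is independent over $Q$.

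\textbf{Controlled character (main obstacle).} We must show $\opart{a_w}\sseq\acl_0(Q\,\opart{a_i}, i\in w)$. We know $\opart{a_w}\sseq\acl_0(Q a_i, i\in w)$, and the union $a_{\{1\}}\dots a_{\{k\}}$ (for $w=\{1,\dots,k\}$) is $P$-independent by strong fairness. That gives
\[
\bigcup_{i\in w} a_i \ind^0_{\bigcup_i \opart{a_i}} P_M .
\]
Moreover $Q$ is $P$-independent, and it lies inside each $a_i$ since the system is algebraically closed, so $P_Q\sseq\opart{a_i}$.

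With $A=Q\bigcup_i a_i$ and $D=P_Q\bigcup_i\opart{a_i}$, we get $A\ind^0_D P_M$. Since $\opart{a_w}\sseq\acl_0(A)\cap P_M$, we have $\opart{a_w}\ind^0_D \opart{a_w}$, so $\opart{a_w}\sseq\acl_0(D)\sseq\acl_0(Q\,\opart{a_i}, i\in w)$.

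The delicate point is justifying $P$-independence of $Q\cup\bigcup_i a_i$ rather than just $\bigcup_i a_i$. Since $Q\sseq a_i$, the former union is just $\bigcup_i a_i$, so this is exactly what strong fairness supplies. Possibly assumption (\ref{pairs:AlgClosure}) is also needed to ensure realizations are algebraically closed.
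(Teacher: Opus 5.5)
Your proof is correct and follows the paper's argument: take a realization witnessing strong fairness, use $P$-independence of $\bigcup_{i\in w} a_i$ (which already contains $Q$) to get $\bigcup_{i\in w} a_i \ind^0_{\bigcup_i \opart{a_i}} P_M$, and conclude, since $a_w$ is algebraic over $\bigcup_i a_i$, that $\opart{a_w}=P_{a_w}$ is algebraic over the $\opart{a_i}$'s. The only cosmetic difference is that you get $a_w\sseq\acl_0(Q a_i, i\in w)$ directly from the controlled character of the $T_0$-system $(p_w)$, while the paper obtains it via assumption (\ref{pairs:AlgClosure}), so your closing hedge is unnecessary.
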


\begin{proof}
Note that axiom \ref{axiom:AmalgSyst1} and axiom  \ref{axiom:AmalgSystIndependence} (independence) of amalgamation systems are direct consequences of their analog for the amalgamation system $(p_w, w\in W)$.
Axiom \ref{axiom:AmalgSyst2} comes from the compatibility of the family $(\opart{x_w}, w\in W)$.
It only remains to show that axiom  \ref{axiom:AmalgSystControl} (controlled character) hold.

Let then $b_w$ be a realization of $p_w$ witnessing the strong fairness.
We therefore have that $\bigcup_{i\in w} b_i$ is $P$-independent, hence the assumption (\ref{pairs:AlgClosure}) implies that $\acl_1(b_i, i\in w)$ equals $\acl_0(b_i, i\in w)$.
In particular, the tuple $b_w$ is contained in $\acl_0(b_i,i\in w)$.
By $P$-independence of $\bigcup_{i\in w} b_i$ and since $\opart{b_i} = P_{b_i}$, it follows that $b_w\ind^0_{(\opart{b_i}, i\in w)} P_M$ holds.
This yields that $\opart{b_w}=P_{b_w}$ is algebraic over $(\opart{b_i}, i\in w)$.
\end{proof}

Keeping the notations of the lemma, if $p_\top(x_\top)$ is a minimal completion of the algebraically closed strong fair amalgamation system $(p_w(x_w), \opart{x_w})_{ w\in W}$, then $\opart{p_\top}$ is a minimal completion of $(\opart{p_w}, w\in W)$.
Recall that the $\opart{p_w}$'s do not fork over $P_Q$.
We showed in Remark \ref{rk:ContinuityNonforking} that the property "being nonforking over a given subset" is preserved by amalgamation, whence $\opart{p_\top}$ does not fork over $P_Q$.
Moroever, this also implies by Lemma \ref{lem:RestrictionAmalgSystem} that $(\widehat{p_w}, w\in W)$ is an amalgamation system over $P_Q$, and the fact that $\opart{p_\top}$ does not fork over $P_Q$ yields that $\widehat{p_\top}$ is a completion of $(\widehat{p_w}, w\in W)$.

Note also that by fairness of the system, every $p_w(x_w)$ has a realization $a_w$ such that $\opart{a_w} = P_{a_w}$ is relatively algebraically closed in $P$.
In particular, the tuple $\opart{a_w}$ is a relatively algebraically closed realization of $\widehat{p_w}$; it follows that $(\widehat{p_w}, w\in W)$ is an amalgamation system which is algebraically closed relatively to $P$.

In the case where $\mcC(P_Q)$ is closed under relatively algebraically closed amalgamation, this implies that $\widehat{p_\top}$ can be realized in $P$.
We can now use the assumption (\ref{pairs:PIndependentRealizations}) to find a realization $b_\top$ of $p_\top$ which is independent from $P$ over $\opart{b_\top}$, with $\opart{b_\top}$ contained in $P$ (i.e. $(p_\top, \opart{x_\top})$ is fair).

\begin{lemma} \label{lem:DissociationPOSystems}
For every strict subsets $v_1,...,v_m$ of $[n]$, we have the independence $b_{v_1}...b_{v_m} \ind^0_{\opart{b_{v_1}}...\opart{b_{v_1}}} P_M$.
\end{lemma}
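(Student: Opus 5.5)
Our approach is to get everything from the one fact we control, namely $b_\top \ind^0_{\opart{b_\top}} P_M$, which assumption (\ref{pairs:PIndependentRealizations}) provides, and then to push it down to the subtuple $b_{v_1}\dots b_{v_m}$. Write $b_V=b_{v_1}\dots b_{v_m}$ and $\opart{b_V}=\opart{b_{v_1}}\dots\opart{b_{v_m}}$. Monotonicity gives $b_V \ind^0_{\opart{b_\top}} P_M$. The task is therefore to shrink the base from $\opart{b_\top}$ to $\opart{b_V}$. This is a kind of base-reduction, and it cannot be done in general; we have to use the amalgamation-system structure.

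First we use the structure of the system. Let $w=v_1\cup\dots\cup v_m$; we may suppose $w\neq[n]$, since otherwise there is nothing specific to this case beyond the general argument below. The system $(p_u,\opart{x_u})_{u\in W}$ is strongly fair. So $p_w$ has a realization $a_w$ for which $a_{v_1}\dots a_{v_m}$ is $P$-independent, that is $a_V\ind^0_{P_{a_V}}P_M$, and $P_{a_V}=\opart{a_V}$ by fairness. We want to transfer this to $b$. Note that $b_w$ also realizes $p_w$, so $b_V\equiv^0_Q a_V$. However, $P$-independence is not an $\mcL_0$-type property, so the transfer cannot be made directly.

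Instead we argue inside $b_\top$. By the independence axiom of the completed system, the family $(b_i)_{i\le n}$ is $\ind^0$-independent over $Q$. Also $\opart{b_\top}$ is algebraic over $(\opart{b_i})_i$, by the controlled character of $(\opart{p_u})$, which is the previous lemma applied to the completion. The key step is to show $b_V\ind^0_{\opart{b_V}}\opart{b_\top}$. Combined with $b_V\ind^0_{\opart{b_\top}}P_M$, transitivity and $\opart{b_\top}\subseteq P_M$ then give $b_V\ind^0_{\opart{b_V}}P_M$.

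To prove $b_V\ind^0_{\opart{b_V}}\opart{b_\top}$, we reduce to the generators. Since $\opart{b_\top}\subseteq\acl_0(\opart{b_i},i\le n)$, by closure it suffices to show $b_V\ind^0_{\opart{b_V}}(\opart{b_i})_{i\notin w}$. This follows from two observations. First, $b_V\subseteq\acl_0(b_i,i\in w)$, and $(b_i)_{i\in w}\ind^0_Q (b_i)_{i\notin w}$. Second, $Q\subseteq\opart{b_V}$ on its $P$-part, and $Q$ is $P$-independent, so $Q\ind^0_{P_Q}P_M$. Base monotonicity then yields $b_V\ind^0_{Q\opart{b_V}}(\opart{b_i})_{i\notin w}$. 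We still need to remove $Q$ from the base, using $Q\ind^0_{P_Q}P_M$ together with $Q\subseteq b_V$. This base-adjustment is the step we expect to be the main obstacle. Our plan is to apply transitivity twice, through the chain $P_Q\subseteq\opart{b_V}$, using that $Q\subseteq\acl_0$ of each $b_i$ and that the $b_i$ are algebraically closed.
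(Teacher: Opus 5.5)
Your argument is correct and is essentially the paper's proof. It uses the same ingredients: $b_\top\ind^0_{\opart{b_\top}}P_M$, algebraicity of $\opart{b_\top}$ over the $\opart{b_i}$'s, $b_V\ind^0_Q (b_i)_{i\notin w}$, and $P$-independence of $Q$. You combine them by first proving $b_V\ind^0_{\opart{b_V}}\opart{b_\top}$ and then applying one transitivity, whereas the paper shrinks the base step by step, $\opart{b_\top}\to\opart{b_V}\opart{b_u}\to Q\opart{b_V}\to\opart{b_V}$.

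The step you flag as the main obstacle is routine and is exactly the paper's last step. Since $P_Q\subseteq\opart{b_V}\subseteq P_M$, base monotonicity turns $Q\ind^0_{P_Q}P_M$ into $Q\ind^0_{\opart{b_V}}(\opart{b_i})_{i\notin w}$. One transitivity with $b_V\ind^0_{Q\opart{b_V}}(\opart{b_i})_{i\notin w}$ then removes $Q$ from the base; you do not need $Q\subseteq\acl_0(b_i)$.
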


\begin{proof}
Define $u=[n]\setminus \bigcup_i v_i$.
By choice of $b_\top$ we have $b_\top\ind^0_{\opart{b_\top}} P_M$, 
and the previous lemma yields $\opart{b_\top} \sseq \acl_0(\opart{b_{v_1}}...\opart{b_{v_m}} \opart{b_u})$, so we can deduce that the independence $b_{v_1}...b_{v_m} \ind^0_{\opart{b_{v_1}}...\opart{b_{v_m}} \opart{b_u}} P_M$ holds.
Since $p_\top$ is a completion of an independent amalgamation system, we know that  $b_{v_1}...b_{v_m} \ind^0_Q b_u$ holds, so  $b_{v_1}...b_{v_m} \ind^0_{Q\opart{b_{v_1}}...\opart{b_{v_m}}} P_M$ follows by transitivity.
Finally, we know that $Q$ is $P$-independent and that $\opart{b_{v_1}}...\opart{b_{v_m}}$ is contained in $P_M$, so transitivity yields the result.
\end{proof}

For every $w$ in $W$, we get that $\opart{b_w}$ is a realization of $\widehat{p_w}$ contained in $P$, which yields that it is relatively algebraically closed by the assumption (\ref{pairs:RelAlgClos}).
Moreover, the lemma implies the independence $b_w \ind^0_{\opart{b_w}} P_M$, therefore $\opart{b_w}$ coincide with $P_{b_w}$.
Lemma \ref{lem:DissociationPOSystems} hence states that $(p_w, w\in W) \cup \{ p_\top \}$ is a strong fair amalgamation system.
We can therefore apply Theorem \ref{thm:AbstractAmalgamationTransfer} to obtain the following:

\begin{cor} \label{cor:nExistPairs}
Let $Q$ be an algebraically closed $P$-independent set.
If  $T_0$ has $n$-existence over $Q$ and if $\mcC(P_Q)$ is closed under relatively algebraically closed $n$-amalga-mation, then $T_1$ has $n$-existence over $Q$.
\hfill \qedsymbol
\end{cor}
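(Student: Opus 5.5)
The corollary follows by applying Theorem \ref{thm:AbstractAmalgamationTransfer} with $\mcG$ the class of small $P$-independent sets. The only real work is checking hypothesis \ref{hyp:ContinuityFSystems}; the other hypotheses were handled above.

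First, \ref{hyp:StrongBdd} and \ref{hyp:CoheirLemmaLight} hold trivially, since the $\mcL_0$-reduct $M$ of $N$ is itself a model of $T_0$. Hence $N$ and $M$ have the same underlying set, and relatively algebraically closed sets are algebraically closed. Assumptions (\ref{pairs:AlgClosure}) and (\ref{pairs:CaracIndep}) give \ref{hyp:InteractionAlgClosure} and \ref{hyp:CaracIndep}. Assumptions (\ref{pairs:AlgClosure}) and (\ref{pairs:SameType}) together give \ref{hyp:SameType}.

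For \ref{hyp:ContinuityFSystems}, I follow strategy $(*)$/$(**)$ of Subsection \ref{sec:VerifHyp}. Fix an algebraically closed strongly fair system $(p_w,\opart{x_w})_{w\in W}$ over $Q$ and a minimal completion $p_\top$.

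\begin{enumerate}
\item By the first lemma above, $(\opart{p_w})_{w\in W}$ is an amalgamation system over $Q$, and $\opart{p_\top}$ is a minimal completion of it.
\item Each $\opart{p_w}$ does not fork over $P_Q$, because a fair realization $a_w$ satisfies $a_w\ind^0_{P_{a_w}}P_M$ and $Q$ is $P$-independent. By Remark \ref{rk:ContinuityNonforking}, $\opart{p_\top}$ then does not fork over $P_Q$.
\item By Lemma \ref{lem:RestrictionAmalgSystem}, $(\widehat{p_w})_{w\in W}$ is an amalgamation system over $P_Q$, and $\widehat{p_\top}$ is a completion of it.
\item This system is algebraically closed relatively to $P$, since fair realizations have $\opart{a_w}=P_{a_w}$, which is relatively algebraically closed in $P_M$.
\item Since $\mcC(P_Q)$ is closed under relatively algebraically closed $n$-amalgamation, $\widehat{p_\top}$ is realized in $P$.
\item Assumption (\ref{pairs:PIndependentRealizations}) then yields a realization $b_\top$ of $p_\top$ with $\opart{b_\top}\sseq P$ and $b_\top\ind^0_{\opart{b_\top}}P_M$.
\end{enumerate}

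It remains to see that $b_\top$ witnesses strong fairness of the completed system. By Lemma \ref{lem:DissociationPOSystems}, each union $b_{v_1}\dots b_{v_m}$ is independent from $P_M$ over $\opart{b_{v_1}}\dots\opart{b_{v_m}}$. Each $\opart{b_w}$ realizes $\widehat{p_w}$ inside $P$, so it is relatively algebraically closed by (\ref{pairs:RelAlgClos}). From these two facts, $\opart{b_w}=P_{b_w}$ and each such union is $P$-independent, i.e. lies in $\mcG$. This establishes \ref{hyp:ContinuityFSystems}.

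Theorem \ref{thm:AbstractAmalgamationTransfer} then gives $n$-existence of $T_1$ over $Q$. Its hypothesis asks for $n$-existence of $T_0$ over $\acl_0(Q)$, which equals $Q$ by (\ref{pairs:AlgClosure}) because $Q$ is algebraically closed and $P$-independent.

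The main obstacle is the equality $\opart{b_w}=P_{b_w}$. Independence over $\opart{b_w}$ alone does not exclude extra points of $P$ in $b_w$. The argument is that any such extra point would lie in $P_M$, so by the independence it is algebraic over $\opart{b_w}$; it then lies in $\opart{b_w}$ because $\opart{b_w}$ is relatively algebraically closed in $P_M$.
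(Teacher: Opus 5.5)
Your proposal is correct and follows the paper's proof essentially step for step. You derive \ref{hyp:StrongBdd}--\ref{hyp:SameType} from assumptions (\ref{pairs:AlgClosure})--(\ref{pairs:SameType}) in the same way, and you reach \ref{hyp:ContinuityFSystems} by the same route: the systems $(\opart{p_w})$ and $(\widehat{p_w})$, Remark \ref{rk:ContinuityNonforking}, Lemma \ref{lem:RestrictionAmalgSystem}, closure of $\mcC(P_Q)$, assumption (\ref{pairs:PIndependentRealizations}) and Lemma \ref{lem:DissociationPOSystems}. Your explicit argument that $\opart{b_w}=P_{b_w}$ (any further point of $P_M$ in $b_w$ is algebraic over the relatively algebraically closed $\opart{b_w}$, hence already in it) spells out a step the paper leaves implicit.
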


Let us now provide examples of theories to which Corollary  \ref{cor:nExistPairs} applies.
We first consider the theory of \emph{lovely pairs} of a simple theory $T_0$, introduced in \cite{Poizat_1983} under the name "belles paires" in the case of $T_0$ stable and generalized to $T_0$ simple in \cite{BenyaacovPillayVassiliev2003}.

Recall that for $M$ a model of a simple theory $T_0$ and $P$ a new unary predicate such that $P_M$ is a model of $T_0$, we say that the pair $(M, P_M)$ is a $\kappa$\textit{-lovely pair} if
\begin{enumerate}
	\item[($i$)] \label{AxiomLovelyPairs1} For all $C$ in $M$ with $|C|<\kappa$ and all finitary $\mcL$-type $p$ over $C$, there exists a realization $a$ of $p$ in $M$ such that $a\ind_C^0 P_M$ holds.
	\item[($ii$)] \label{AxiomLovelyPairs2} For all $C$ in $M$ with $|C|<\kappa$ and all finitary $\mcL$-type $p$ over $C$, if $p$ does not fork over $P_C$, then $p$ is realized in $P_M$.
\end{enumerate}
\noindent
If $\kappa= |T_0|^+$, we just say that $(M, P_M)$ is a \textit{lovely pair}.

We say that the theory of lovely pairs of $T_0$ exists if every $\kappa$-saturated model of $T_1$ is a $\kappa$-lovely pair of models of $T_0$, where $T_1$ is the common $\mcL_P$-theory of all lovely pairs of models of $T$.

From \cite[Remark 7.2 and Proposition 7.3]{BenyaacovPillayVassiliev2003}, one obtains the following results: 

\begin{fact} 
\label{fact:SummaryBP}
Let $A, B$ and $C$ be $P$-independent subsets of $M$, with $C\sseq A\cap B$. The following hold:
\begin{enumerate}[(a)]
	\item Any set which is algebraically closed in $T_1$ is $P$-independent.
	
	\item The algebraic closures in $T_0$ and in $T_1$ coincide: $\acl_1(A)= \acl(A)$.
	
	\item Independence is characterized as follows
	$$ A\ind_C^1 B \Leftrightarrow  A\ind_{P,C}^0 B \text{ and } A \ind_C^0 B$$

	\item If $A$ and $B$ have the same quantifier-free $\mcL_1$-type over $C$ with $P_A$ and $P_B$ algebraically closed in $T_0$, then $A$ and $B$ have the same $\mcL_1$-type over $C$.
	
\end{enumerate}

\end{fact}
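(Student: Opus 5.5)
The plan is to derive the four items from the basic theory of lovely pairs in \cite{BenyaacovPillayVassiliev2003}, essentially from Remark 7.2 and Proposition 7.3 there, translated into our notation. The central tool is a back-and-forth between $P$-independent sets. So I would prove (d) first, then deduce (a) and (b) from it, and treat (c) last.

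\emph{Item (d).} Let $A,B$ be $P$-independent with the same quantifier-free $\mcL_1$-type over $C$, and with $P_A,P_B$ algebraically closed in $T_0$. By quantifier elimination in $T_0$, the partial map $f\colon A\to B$ is $\mcL_0$-elementary and sends $P_A$ onto $P_B$. I would show that the system of such partial maps between small $P$-independent sets with $\acl_0$-closed $P$-parts has the back-and-forth property in the $\kappa$-saturated pair. So let $e$ be a new element. First enlarge $A$ to a $P$-independent set $Ae'$ with $e\in Ae'$, with $e'$ a small set:
\begin{itemize}
\item by local character, choose a small $D\subseteq P_M$ with $Ae\ind^0_{P_AD}P_M$;
\item set $e'=e\,\acl_0(P_AD)\cap P_M$.
\end{itemize}
To extend $f$ to $Ae'$:
\begin{itemize}
\item realize the image of the $P$-part $\tp_0(e'\cap P_M/A)$ inside $P_M$ using axiom $(ii)$. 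This type does not fork over $P_A$, because $A\ind^0_{P_A}P_M$ holds.
\item realize the remaining part independently from $P_M$ over it using axiom $(i)$.
\end{itemize}
Invariance of $\ind^0$ keeps the image $P$-independent with the correct $P$-part, so the extended map is again in the system. Hence $\tp_1(A/C)=\tp_1(B/C)$.

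\emph{Items (a) and (b).} For (b), the inclusion $\acl_0\subseteq\acl_1$ is trivial. For the converse, let $e\in\acl_1(A)$ with $A$ $P$-independent, and suppose $e\notin\acl_0(A)$. By extension and axiom $(i)$, I would produce infinitely many conjugates of $e$ over $A$ in $T_0$, each extended to $P$-independent sets with the same quantifier-free $\mcL_1$-type over $A$. By (d) these conjugates are $\mcL_1$-conjugate over $A$, contradicting $e\in\acl_1(A)$. For (a), let $A=\acl_1(A)$ and assume $A$ is not $P$-independent. Pick a finite $a\subseteq A$ and a canonical-base witness $d$ in $P_M$ with $a\not\ind^0_{P_A}d$. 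I would then argue that the failure of $P$-independence is witnessed by elements of $P_M$ that are $\mcL_1$-algebraic over $a$: their orbit is bounded by (d) applied to a $P$-independent hull. This would put them in $P_A$, a contradiction. I expect this step to be the main obstacle, since in a simple theory canonical bases are hyperimaginary. I would first try the route of \cite{BenyaacovPillayVassiliev2003}: show directly that $\acl_1(A)$ contains the $P$-part of a $P$-independent hull of $A$.

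\emph{Item (c).} For left-to-right, assume $A\ind^1_CB$. Take a Morley sequence in $\tp_1(A/B)$ and combine it with (d) to see that both $A\ind^0_CB$ and $A\ind^0_{P_MC}B$ hold. For the converse, verify that the relation on the right-hand side satisfies the Kim--Pillay axioms on $P$-independent sets. Extension follows from the back-and-forth of (d) together with axioms $(i)$ and $(ii)$. The independence theorem is inherited from $T_0$, applied over $P_MC$ and over $C$. By Kim--Pillay this relation then coincides with $\ind^1$.
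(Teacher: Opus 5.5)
The paper gives no argument of its own here: it imports all four items from \cite[Remark 7.2 and Proposition 7.3]{BenyaacovPillayVassiliev2003}, and uses them only under the EI hypothesis of Corollary \ref{cor:nAmalgLovelyPairs}. Your back-and-forth for (d) is the standard argument and works, with two small points. Axiom $(ii)$ concerns finitary types, so you must realize the small $P$-part $\acl_0(P_AD)$ one element at a time. It is the $\acl_0$-closedness of the new $P$-part that keeps the new point out of $P_M$. Item (b) does follow from (d), but you need axiom $(ii)$ as well as $(i)$ to put the $P$-parts of the conjugate hulls inside $P_M$.

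The genuine gap is (a). You leave it open, and your plan never uses any form of elimination of imaginaries; without it, (a) is false. Take for $T_0$ the theory of an equivalence relation $E$ with infinitely many classes, all infinite. It has QE and is stable and $\aleph_0$-categorical, hence has nfcp, so its theory of lovely pairs exists; but it lacks EI. In a saturated lovely pair pick $p\in P_M$ and, by axiom $(i)$, a point $a$ with $aEp$, $a\neq p$ and $a\ind^0_p P_M$; then $a\notin P_M$. For $p',p''\in P_M$ in the class of $a$, the sets $ap'$ and $ap''$ are $P$-independent with the same quantifier-free $\mcL_1$-type, so $\tp_1(p'/a)=\tp_1(p''/a)$ by (d). Arguing the same way for points of $P_M$ outside this class gives $\acl_1(a)\cap P_M=\emptyset$. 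Yet $a\ind^0_\emptyset P_M$ fails, because $xEp$ divides over $\emptyset$. So $\acl_1(a)$ is not $P$-independent. The $\mcL_1$-algebraic points of $P_M$ witnessing the failure, which your orbit count via (d) was supposed to produce, need not exist at all.

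The missing idea is invariance plus coding. Every $\sigma\in\aut_1(N/a)$ stabilizes $P_M$, hence fixes $\tp_0(a/P_M)$ and therefore $\cb_0(a/P_M)$; no orbit counting is needed. By EI (and, for a general simple $T_0$, elimination of hyperimaginaries), this canonical base is coded by a real tuple $c\sseq\dcl_0(P_M)=P_M$, using $P_M\preceq M$. Then $c\sseq\dcl_1(a)\cap P_M$ and $a\ind^0_c P_M$. So for $A=\acl_1(A)$, every finite $a\sseq A$ satisfies $a\ind^0_{P_A}P_M$, and finite character gives (a).

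Item (c) inherits this gap. Kim--Pillay requires a relation on all small sets, which you would have to define through $\acl_1$-closures, hence through (a). Moreover, the independence theorem for the right-hand relation is not simply inherited from $T_0$. You must first amalgamate the $P$-parts, using the independence theorem of $T_0$ and realizing the result in $P_M$ via axiom $(ii)$. Then you amalgamate the remaining parts via axiom $(i)$, and check with (d) that the amalgam has the prescribed $\mcL_1$-types. This is the actual content of the simplicity proof in \cite{BenyaacovPillayVassiliev2003}. Your Morley-sequence argument for the left-to-right direction becomes redundant once Kim--Pillay applies. On its own, it presupposes that $\ind^1$-Morley sequences are $\ind^0$-independent, which is part of what is being proved.
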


Note that the assumptions (\ref{pairs:AlgClosure}), (\ref{pairs:RelAlgClos}) and (\ref{pairs:SameType}) of this subsection are clear consequences of Fact \ref{fact:SummaryBP}.
Moreover, simple forking calculus allows to deduce (\ref{pairs:CaracIndep}) from the caracterisation of the independence relation.
Finally, if $p(x)$ is a type in $T_0$ over a $P$-independent set and $\opart x$ a subtuple of $x$ such that $p(x)\rest \opart x$ does not fork over $P_Q$, then axiom ($ii$) of lovely pairs yields that $p(x)\rest \opart x$ has a realization $\opart a$ in $P_M$.
Now axiom ($i$) yields a realization $a$ of the type $p(x) \cup\{\opart{x} = \opart{a}\}$ such that $a\ind_{Q\opart{a}}^0 P_M$ holds.
It follows that $a$ is independent from $P_M$ over $P_Q \opart a$ hence (\ref{pairs:PIndependentRealizations}) holds.

If $D$ is an algebraically closed subset of $P_M$, the class $\mcC(D)$ is simply the class of all $\mcL_0$-types over $D$ - in particular it is closed under amalgamation. 
We can therefore apply Corollary \ref{cor:nExistPairs}, which yields: 

\begin{cor} \label{cor:nAmalgLovelyPairs}
Let $T_0$ be a simple theory with QE and EI such that the theory $T_1$ of lovely pairs of models of $T$ exists. 
Given $Q$ a subset of a model of $T_1$ closed under $\acl_P$, if $T_0$ has $n$-existence over $Q$, then $T_1$ has $n$-existence over $Q$.
\hfill \qedsymbol
\end{cor}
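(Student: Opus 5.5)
The plan is to obtain Corollary \ref{cor:nAmalgLovelyPairs} as a direct instance of Corollary \ref{cor:nExistPairs}. That result needs four ingredients: assumptions (\ref{pairs:AlgClosure})--(\ref{pairs:PIndependentRealizations}) hold for lovely pairs; $Q$ is an algebraically closed $P$-independent set; $T_0$ has $n$-existence over $Q$; and $\mcC(P_Q)$ is closed under relatively algebraically closed $n$-amalgamation.

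First, the assumptions. The discussion after Fact \ref{fact:SummaryBP} already derives them.
\begin{itemize}
\item Items (a), (b) of Fact \ref{fact:SummaryBP} give (\ref{pairs:AlgClosure}).
\item Item (d) gives (\ref{pairs:SameType}), applied over $C=\emptyset$, or over $\acl_0(\emptyset)$ absorbed into the language.
\item For (\ref{pairs:RelAlgClos}) I would use that $P_M$ is an elementary substructure of $M$ in $\mcL_0$. Hence $\acl_0(b)\cap P_M=\acl_0(b)$ for $b\sseq P_M$, so a tuple in $P_M$ is relatively algebraically closed iff it is $T_0$-algebraically closed, which is a property of its $T_0$-type.
\item For (\ref{pairs:CaracIndep}), I would rewrite item (c) using forking calculus. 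Assume $a,b,c$ are $P$-independent with $c\sseq a\cap b$. Then $a\ind^0_{Pc} b$ together with $a\ind^0_c b$ is equivalent to $a\ind^0_c b$ together with $ab\ind^0_{P_{ab}} P_M$. I would prove both directions by transitivity, using $a\ind^0_{P_a}P_M$, $b\ind^0_{P_b}P_M$ and $P_{ab}=P_aP_b$.
\item For (\ref{pairs:PIndependentRealizations}), I would realize $\opart{p}$ in $P_M$ by axiom ($ii$), then apply axiom ($i$) over $Q\opart a$.
\end{itemize}

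Next, the base set. ``Closed under $\acl_P$'' should mean $Q=\acl_1(Q)$. By (a), (b) of Fact \ref{fact:SummaryBP}, $Q$ is then $P$-independent and $T_0$-algebraically closed, so Corollary \ref{cor:nExistPairs} applies over $Q$ with the given $n$-existence of $T_0$ over $Q$.

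The remaining, and main, point is closure of $\mcC(P_Q)$ under relatively algebraically closed amalgamation. Since $P_M\preccurlyeq M$ is $\kappa$-saturated, hence a $\kappa$-saturated model of $T_0$, every consistent $\mcL_0$-type over the small set $P_Q\sseq P_M$ is realized in $P_M$. So $\mcC(P_Q)$ is the set of all types over $P_Q$, and any minimal completion $q_\top$ of an amalgamation system over $P_Q$ has a realization in $P_M$. The subtle part is that a minimal completion must exist at all. In Corollary \ref{cor:nExistPairs} it is already supplied by the $n$-existence of $T_0$ used in Theorem \ref{thm:AbstractAmalgamationTransfer}, so only realizability in $P$ is needed, which saturation gives. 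I would also make sure the saturation hypothesis on $(M,P_M)$, coming from the existence of the theory of lovely pairs, indeed makes $P_M$ $\kappa$-saturated. With these checks, Corollary \ref{cor:nExistPairs} yields $n$-existence of $T_1$ over $Q$.
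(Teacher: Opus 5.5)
Your proposal is correct and follows the paper's route. Like the paper, you verify assumptions (\ref{pairs:AlgClosure})--(\ref{pairs:PIndependentRealizations}) from Fact \ref{fact:SummaryBP} and the lovely-pair axioms, note that $\mcC(P_Q)$ is the class of all $\mcL_0$-types over $P_Q$, and apply Corollary \ref{cor:nExistPairs}. The only difference is that you justify (\ref{pairs:RelAlgClos}) and the closure of $\mcC(P_Q)$ via $P_M\preccurlyeq M$ and the $\kappa$-saturation of $P_M$, which the paper simply asserts; the closure also follows directly from axiom ($ii$), since any type over $P_Q\sseq P_M$ trivially does not fork over $P_Q$.
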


\smallskip

Assume now that $T_0$ is a supersimple $\mcL_0$-theory and let $\mcF$ be a type-definable set over $\emptyset$ in $T_0$.
In \cite{BerensteinCarmonaVassiliev_2017}, the authors define the theory $T_1= (T_0)_\mcF^{ind}$ of $H$-structures associated to $\mcF$, provided that $T_0$ eliminates $\exists^{large}$ and that the extension property is first-order (see \cite{BerensteinCarmonaVassiliev_2017}).
We keep the notations of \cite{BerensteinCarmonaVassiliev_2017}, and we call $H$ the new predicate symbol instead of $P$.
A $H$\textit{-structure associated to} $\mcF$ is a structure $(M, H_M)$ such that 
\begin{enumerate}[(i)]
	\item[($i$)] $M$ is a model of $T_0$;
	\item[($ii$)] $H$ forms an independent set of realizations of $\mcF$;
	\item[($iii$)] If $A\sseq M$ is finite and $q\in S_1(A)$ is a nonforking extension of $\mcF$ to $A$, then there are realizations $b$ and $c$ of $q$ such that $b$ belongs to $H_M$ and $c$ is independent from $H_M$ over $A$.
\end{enumerate}
All $H$-structures associated to $\mcF$ are elementarily equivalent and $T_1$ is their common theory.
All $|T|^+$-saturated models of $T_1$ are $H$-structures; let us fix such a structure $(M, H_M)$.
We keep the notation of Definition \ref{def:PIndependentSet} (adapted to the case where the predicate $P$ is called $H$) and  we say that a tuple $a$ is $H$-independent if $a\ind_{H_a}^0 H_M$ holds.

\begin{fact} \label{fact:SummaryHStructures}
We gather here some results about $H$-structures.
\begin{enumerate}[(a)]
	\item \cite[Lemma 2.6]{BerensteinCarmonaVassiliev_2017} Any model $M$ of $T_0$ with a distinguished independent subset $H(M)$ of realizations of $\mcF$ can be embedded into a $H$-structure in a $H$-independent way.
	
	\item \cite[Proposition 2.7]{BerensteinCarmonaVassiliev_2017} Let $a$ and $a'$ be $H$-independent tuples.
	If $\qftp_1(a)=\qftp_1(a')$, then $\tp_1(a)=\tp_1(a')$.
	
	\item \cite[Lemma 4.12]{BerensteinCarmonaVassiliev_2017} Let $A$ be $H$-independent. 
	The algebraic closures of $A$ in $T_0$ and in $T_1$ coincide: $\acl_0(A)=\acl_1(A)$.
	
	\item \cite[Theorem 5.3]{BerensteinCarmonaVassiliev_2017} Let $a$ be a tuple in $M$ and $C$ a $H$-independent subset of $M$.
	Then there exists a smallest subset $H_0$ of $H$ such that $a\ind_{C H_0}^0 H$; we call it the $H$-basis of $a$ over $C$ and we denote it $\hb(a/C)$.
	Moreover, for all tuple $a$ of $M$, we have $\acl_1(a)=\acl_0(a, \hb(a))$. 
	For all $\acl_1$-closed subsets $C\sseq B$, we have
	\[
		a\ind^1_C B \Leftrightarrow a\ind_{H_M C}^0 B \text{ and } \hb(a/C)=\hb(a/B).
	\]	
\end{enumerate}
\end{fact}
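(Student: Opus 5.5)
Since this Fact collects results of \cite{BerensteinCarmonaVassiliev_2017}, the honest plan is to re-derive each item from the axioms $(i)$--$(iii)$ of $H$-structures, supersimplicity of $T_0$ and the first-order character of the extension property. I would prove the items in the order (a), (b), (c), (d), since each uses the previous ones.

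For (a), I would start from $M$ with its independent set $H(M)$ of realizations of $\mcF$ and build an elementary chain $M=M_0\preccurlyeq M_1\preccurlyeq\cdots$. At each step, every nonforking extension $q$ of $\mcF$ to a finite $A\sseq M_i$ gets two realizations. The first, $b$, is independent from $M_i$ over $A$ and is added to $H$; it is a nonforking extension of $\mcF$ independent from everything, so $H$ stays an independent set of realizations of $\mcF$. The second, $c$, is independent from $H$ over $A$. All new elements are chosen independent from $M_i$ over their parameters, so transitivity keeps $M_i\ind^0_{H(M_i)} H(M_{i+1})$. At the union, $M$ is then $H$-independently embedded.

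For (b), I would run a back-and-forth between $H$-independent tuples in a saturated $H$-structure. The key extension step is as follows. Given $a\mapsto a'$ with equal quantifier-free $\mcL_1$-types and a new element $d$, first enlarge $d$ to an $H$-independent tuple by adjoining a finite part of $\hb$. Supersimplicity gives local character with finite sets, so finitely many elements of $H$ suffice. Then split into three cases:
\begin{itemize}
\item elements of $H$ are matched using axiom $(iii)$ (a realization in $H$);
\item elements independent from $H$ over the current tuple are matched using the realization $c$ of $(iii)$;
\item the remaining elements are algebraic over the previous two kinds, and their types are transported by quantifier elimination in $T_0$.
\end{itemize}
Independence of $H$ makes the quantifier-free $\mcL_1$-type determined by the $\mcL_0$-type together with the $H$-part.

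For (c), the inclusion $\acl_0(A)\sseq\acl_1(A)$ is trivial. For the converse, I would show two things. First, $\acl_0(A)$ is still $H$-independent (algebraic elements do not change forking). Second, any $e\notin\acl_0(A)$ has infinitely many conjugates over $A$ in $T_1$. For the second point, use (b): realize $\tp_0(e/\acl_0(A)H\text{-part})$ by an $H$-independent copy $e'$ with the same $H$-part, by extension plus axiom $(iii)$ and saturation. Then (b) gives $\tp_1(e'/A)=\tp_1(e/A)$, and we can make infinitely many distinct such $e'$.

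For (d), I expect the main obstacle to be the existence of a \emph{smallest} $H_0$ with $a\ind^0_{CH_0}H$. I would first take a finite $H_1$ witnessing the independence, which exists by supersimplicity and finite SU-rank considerations on $a$ over $C$. Then I would show that the family of such finite subsets is closed under intersection. This uses that $H$ is an independent set of generic realizations of $\mcF$ together with an exchange-type argument via base monotonicity and transitivity: if $a\ind_{CH_1}H$ and $a\ind_{CH_2}H$, then $a\ind_{C(H_1\cap H_2)}H$, using that $H_1\ind_{C(H_1\cap H_2)}H_2$ holds by $H$-independence of $C$. The equality $\acl_1(a)=\acl_0(a,\hb(a))$ then follows from (c) applied to the $H$-independent set $a\,\hb(a)$, together with $\hb(a)\sseq\acl_1(a)$, since $\hb(a)$ is invariant under $\aut(N/a)$ and finite. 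Finally, I would obtain the characterization of $\ind^1$ by checking that the relation on the right satisfies the Kim--Pillay axioms; this uses (b) for invariance and the independence theorem of $T_0$ for amalgamation.
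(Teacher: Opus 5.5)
The paper does not prove this Fact: all four items are quoted from \cite{BerensteinCarmonaVassiliev_2017}, so your reconstruction has to stand on its own. In outline much of it does:
\begin{itemize}
\item the chain construction in (a);
\item the back-and-forth in (b);
\item the construction of $\hb$ in (d). Closure under intersection really does follow from forking calculus: with $h_i=H_i\setminus H_0$, the relations $h_2\ind^0_{CH_0}h_1$ and $h_2\ind^0_{CH_0h_1}a$ give $a\ind^0_{CH_0}h_2$, and together with $a\ind^0_{CH_0h_2}H_M$ this yields $a\ind^0_{CH_0}H_M$.
\end{itemize}

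\textbf{The genuine gap is in (c).} You keep the $H$-part $h$ of $e$ fixed and look for infinitely many realizations of $\tp_0(e/\acl_0(A)h)$. That is only possible when $e\notin\acl_0(Ah)$.

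Suppose $e\notin H_M$ but $e\in\acl_0(AH_M)\setminus\acl_0(A)$. Then every $h$ making $Aeh$ $H$-independent contains $\hb(e/A)$, so $e\in\acl_0(Ah)$. The type is algebraic, and your procedure produces only finitely many candidates. For instance, in a vector space with $\mcF$ the generic type, take $e=h+a$ with $0\neq a\in A$ and $h\in H_M\setminus A$. This is exactly the case where $H$-independence of $A$ matters (compare $\acl_1(a)=\acl_0(a,\hb(a))$ in (d)).

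The missing idea is to move the $H$-part itself. Choose pairs $(e_k,h_k)$ such that:
\begin{itemize}
\item $h_k\sseq H_M$ (by density);
\item $\tp_0(e_kh_k/A)=\tp_0(eh/A)$;
\item $e_k\ind^0_{Ah_k}H_M$ (by extension; automatic when $e\in\acl_0(Ah)$);
\item $e_kh_k\ind^0_A(e_jh_j)_{j<k}$.
\end{itemize}
Then $Ae_kh_k$ is $H$-independent with the same $H$-pattern as $Aeh$, since independence of $H_M$ forces $e_k\notin H_M$. So (b) gives $\tp_1(e_k/A)=\tp_1(e/A)$. The $e_k$ are pairwise distinct because $\tp_0(e/A)$ is not algebraic.

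\textbf{Further steps need repair.}

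In (b), your second case uses the realization $c$ from $(iii)$. As stated, $(iii)$ only covers nonforking extensions of $\mcF$. The non-$H$ part of the enlarged tuple may have an arbitrary type over the current tuple and need not lie in $\mcF$. You therefore need the extension property for arbitrary types over finite sets.

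Your third case in (b) is empty. After the enlargement, the non-$H$ part is already independent from $H_M$ over the current tuple together with its $H$-part. This matters because, in a supersimple theory, an element can depend on $H_M$ without being algebraic over it. The geometric trichotomy is not available, but you do not need it.

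In (c), for infinite $A$, saturation alone does not obviously give $e'\ind^0_{Ah}H_M$, since this is not a small partial $\mcL_1$-type. It is cleaner to prove the first half of (d) first. You can then reduce to the finite $H$-independent set $A_0\,\hb(A_0)\sseq A$ for finite $A_0\sseq A$.

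The characterization of $\ind^1$ in (d) is where the real work lies, and your sketch does not supply it.
\begin{itemize}
\item Invariance of the right-hand relation is immediate and does not need (b).
\item What needs work is extension and the independence theorem. $T_0$ only provides an $\mcL_0$-type (a nonforking extension, resp.\ an amalgam over a model).
\item That type must then be realized inside $(M,H_M)$ with its $H$-part in $H_M$, the whole configuration $H$-independent, and the $H$-basis unchanged. Only then can (b) identify the $\mcL_1$-types.
\end{itemize}
This is exactly what assumption (\ref{pairs:PIndependentRealizations}) and the closure of $\mcC(D)$ under amalgamation encode in Subsection \ref{sec:StructurePIndependence}. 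The transfer argument never uses simplicity of $T_1$ (see the end of Subsection \ref{sec:VerifHyp}), so it is a natural way to get the independence theorem for your candidate relation.
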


Note that all subsets of $H_M$ are relatively algebraically closed in $H_M$; the assumptions (\ref{pairs:AlgClosure}), (\ref{pairs:RelAlgClos}) (\ref{pairs:SameType}) and (\ref{pairs:PIndependentRealizations}) are  therefore clear consequences of Fact \ref{fact:SummaryHStructures}.
Moreover, simple forking-calculus coupled with the definition of the $H$-basis  allows to deduce (\ref{pairs:CaracIndep}) from the caracterization of $\ind^1$.

Remark that for all $D$ in $H_M$, the class $\mcC(D)$ of types over $D$ with a realization in $H$ is closed under  amalgamation.
To see this, let $S=(p_w(x_w), w\in W)$ be an amalgamation system over $D$ such that for all $w$, the type $p_w$ is realizable in $H$.
Take a realization $a_w$ of $p_w$ in $H$, for some $w\in W$.
Then $a_w$ is algebraic over the $a_i$'s, $i\in w$.
Since the relative algebraic closure in $H$ is trivial, this implies that $a_w=\bigcup_{i\in W} a_i$.
It follows that a minimal completion of $S$ is simply the type of an independent family of realizations of the $p_i$'s; in particular such a type is realizable in $H$.

This proves that for all $T_1$-algebraically closed subset $Q$ of $M$, if $T_0$ has $n$-existence over $Q$, then $T_1$ does so too.
We thus obtain:

\begin{cor}
Let $T_0$ be a supersimple theory with QE and EI such that $T_0$ eliminates $\exists^{large}$ and  the extension property is first-order. 
Let $\mcF$ be a type-definable set over $\emptyset$ in $T_0$.
Let $Q$ be an algebraically closed subset of a model of the theory $T_1$ of $H$-structures associated to $\mcF$.
If $T_0$ has $n$-existence over $Q$, then so does $T_1$.
\hfill $\square$
\end{cor}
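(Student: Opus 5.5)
The plan is to obtain the statement from Corollary \ref{cor:nExistPairs}, by checking its hypotheses in the setting of $H$-structures. Most of this verification was already sketched just before the corollary; the proof is mainly about making it precise. We work in a $|T_0|^+$-saturated $H$-structure $(M,H_M)$, which is a saturated model of $T_1$, with $\mcG$ the class of small $H$-independent sets. Let $Q$ be $\acl_1$-closed. By Fact \ref{fact:SummaryHStructures}(d) we have $\acl_1(Q)=\acl_0(Q,\hb(Q))$. Since $Q$ is $\acl_1$-closed, $\hb(Q)\sseq H_Q$ and $Q\ind^0_{H_Q}H_M$. So $Q$ is an algebraically closed $H$-independent set, as Corollary \ref{cor:nExistPairs} requires.

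Next I verify the five assumptions of Subsection \ref{sec:StructurePIndependence}.

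\emph{Assumption (\ref{pairs:AlgClosure}).} An $\acl_1$-closed set contains its $H$-basis, hence is $H$-independent. The equality $\acl_0=\acl_1$ on $H$-independent sets is Fact \ref{fact:SummaryHStructures}(c).

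\emph{Assumption (\ref{pairs:RelAlgClos}).} This is trivial: $H_M$ is an independent set of realizations of $\mcF$, so $\acl_0(a)\cap H_M=a$ for every $a\sseq H_M$.

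\emph{Assumption (\ref{pairs:SameType}).} This follows from Fact \ref{fact:SummaryHStructures}(b) together with (\ref{pairs:AlgClosure}).

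\emph{Assumption (\ref{pairs:PIndependentRealizations}).} Let $\widehat p(\opart x)$ be realized in $H$ and let $\opart p$ not fork over $H_Q$. Realizations in $H$ of $\widehat p$ that are independent from $Q$ over $H_Q$ exist by axiom ($iii$) and saturation. Any such realization $\opart a$ realizes $\opart p$, because $Q$ is $H$-independent, the tuple $\opart a$ is $H$-independent over $H_Q$, and we can use stationarity-free transitivity together with Fact \ref{fact:SummaryHStructures}(b). Then, by the extension half of axiom ($iii$) iterated coordinatewise, or equivalently by Fact \ref{fact:SummaryHStructures}(a) applied to an independent extension, we realize $p(x)\cup\{\opart x=\opart a\}$ by some $a$ with $a\ind^0_{Q\opart a}H_M$. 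Transitivity then gives $a\ind^0_{\opart a}H_M$.

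\emph{Assumption (\ref{pairs:CaracIndep}).} This comes from Fact \ref{fact:SummaryHStructures}(d). For $H$-independent $c\sseq a\cap b$, the condition $\hb(a/c)=\hb(a/b)$ together with $a\ind^0_{H_Mc}b$ is equivalent, by forking calculus, to $a\ind^0_c b$ together with $ab\ind^0_{H_{ab}}H_M$. I expect this step to need the most care. The forward direction uses transitivity through $H_M c$. The converse uses that $H_{ab}=H_aH_b$ for $H$-independent sets. One also has to pass from closed sets to arbitrary $H$-independent ones via $\acl_0=\acl_1$.

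Finally, $\mcC(H_Q)$ is closed under relatively algebraically closed amalgamation. This is the remark preceding the statement: relative algebraic closure in $H$ is trivial, so every system realized in $H$ has $a_w=\bigcup_{i\in w}a_i$. A minimal completion is then just the type of an independent family of realizations of the $p_i$. Such a family can be found inside $H$ by applying axiom ($iii$) repeatedly, since each $p_i$ is a nonforking extension of types of realizations of $\mcF$.

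With all hypotheses checked, Corollary \ref{cor:nExistPairs} yields $n$-existence of $T_1$ over $Q$ whenever $T_0$ has it.
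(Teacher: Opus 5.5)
Your route is the paper's own: apply Corollary~\ref{cor:nExistPairs}, read off the assumptions from Fact~\ref{fact:SummaryHStructures} and from the triviality of relative algebraic closure in $H_M$, and realize minimal completions inside $H_M$. However, one step in your check of assumption~(\ref{pairs:PIndependentRealizations}) fails. You take an arbitrary $\opart a\subseteq H_M$ realizing $\widehat p$ with $\opart a\ind^0_{H_Q}Q$, and you assert that it realizes $\opart p$. But $T_0$ is only simple, so $\widehat p$ can have many nonforking extensions to $Q$, and independence over $H_Q$ does not single out $\opart p$ among them. For instance, in a pseudofinite field, take $h\in H_M$ generic over $Q$ and $q\in Q\setminus\acl_0(H_Q)$. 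Whether $h+q$ is a square is not determined by $\tp_0(h/H_Q)$ together with $h\ind^0_{H_Q}Q$. Fact~\ref{fact:SummaryHStructures}(b) cannot close this gap. To apply it, you would need $Q\opart a$ and $Qe$ (with $e\models\opart p$, which need not even lie in $H_M$) to have the same quantifier-free $\mcL_1$-type. By QE, that type already contains the $\mcL_0$-type over $Q$ you are trying to establish, so the argument is circular.

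The repair is to realize $\opart p$ itself inside $H_M$, by the density argument you sketch for $\mcC(H_Q)$.
\begin{enumerate}
\item Take $e\models\opart p$ and list its coordinates outside $H_Q$ as $e_1,e_2,\dots$.
\item Since $\widehat p$ is realized in $H_M$, these form an independent family of realizations of $\mcF$ with $e_j\ind^0_{\emptyset}H_Qe_{<j}$.
\item Since $\opart p$ does not fork over $H_Q$, you also have $e_j\ind^0_{H_Qe_{<j}}Q$. By transitivity, $\tp_0(e_j/Qe_{<j})$ is a nonforking extension of $\mcF$.
\item The density half of axiom~($iii$), together with compactness and saturation, realizes these types one after the other in $H_M$. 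This yields $\opart a\subseteq H_M$ with $\opart a\models\opart p$.
\end{enumerate}
The same care is needed for $\mcC(H_Q)$: you must realize the given minimal completion coordinatewise this way, not merely some independent family.

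A smaller point: as axiom~($iii$) is stated here, its extension half only covers nonforking extensions of $\mcF$. So to extend $\opart a$ to some $a\models p$ with $a\ind^0_{Q\opart a}H_M$, you should use your second option (Fact~\ref{fact:SummaryHStructures}(a) applied to a $T_0$-independent extension) or the general extension property of $H$-structures, rather than iterating~($iii$) over all coordinates. The rest of your verification, including the forking computation for~(\ref{pairs:CaracIndep}), is sound.
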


\bigskip

Finally, we consider the case of a $\kappa$-PAC beautiful pairs of a stable theory $T_0$, defined by Polkowska in in \cite{Polkowska_2007}.
Let $T_0$ be a stable theory with quantifier elimination, nfcp, PACFO and TACSS.
Fix a $\kappa$-PAC beautiful pair, i.e. a pair $(M, P_M)$ where $P$ is a $\kappa$-PAC substructure of the model $M$ of $T_0$ and such that $M$ is saturated over $P_M$.
By PACFO we may assume $(M, P_M)$ to be sufficiently saturated saturated.
Take $T_1$ to be the $\mcL_1$ theory of $(M,P_M)$ over an elementary substructure. 

\begin{fact} \label{fact:SummaryPACBP}
Let $A, B$ and $C$ be $P$-independent subsets of $M$, with $C\sseq A\cap B$. The following hold:
\begin{enumerate}[(a)]
	\item Any set which is algebraically closed in $T_1$ is $P$-independent.
	
	\item The algebraic closures in $T_0$ and in $T_1$ coincide: $\acl_1(A)= \acl_0(A)$.
	
	\item Independence is characterized as follows
	$$ A\ind_C^1 B \Leftrightarrow  A\ind_{P,C}^0 B \text{ and } A \ind_C^0 B$$

	\item If $A$ and $B$ have the same quantifier-free $\mcL_P$-type over $C$ with $P_A$ and $P_B$ algebraically closed in $T$, then $A$ and $B$ have the same $\mcL_P$-type over $C$.
\end{enumerate}
\end{fact}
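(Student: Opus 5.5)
The statement collects known results on $\kappa$-PAC beautiful pairs, so the plan is to derive each item from Polkowska's analysis in \cite{Polkowska_2007}, in the same way Fact \ref{fact:SummaryBP} was derived from \cite{BenyaacovPillayVassiliev2003}. The natural order is (a), (b), (d), (c), since the type description in (d) is used to characterize forking.

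For (a), take $A=\acl_1(A)$. I would use the beautiful-pair saturation of $M$ over $P_M$ together with local character of $\ind^0$ to find a small $D\subseteq P_M$ with $A\ind^0_D P_M$. Then I would show $D$ can be taken inside $A$: every element of a minimal such $D$ should be $\mcL_1$-algebraic over $A$, because it lies in a canonical-base-like set definable from $A$ and $P$. This requires nfcp, which makes "$\tp_0(a/P_M)$ does not fork over $P_a$" expressible in $\mcL_P$. Monotonicity then gives $A\ind^0_{P_A}P_M$.

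For (b), one inclusion is trivial. For the other, if $A$ is $P$-independent and $e\notin\acl_0(A)$, I would produce infinitely many $\mcL_1$-conjugates of $e$ over $A$. The idea is to realize $\tp_0(e/A)$ by elements $e'$ in the right position relative to $P_M$:
\begin{itemize}
\item if $e\in P_M$, use that $\kappa$-PAC structures realize stationary types, together with (\nameref{Star}) and TACSS;
\item if $e\notin P_M$, use the saturation of $M$ over $P_M$.
\end{itemize}
Item (d) then shows these realizations are $\mcL_1$-conjugate to $e$.

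For (d), I would run a back-and-forth argument between $P$-independent sets with the same quantifier-free $\mcL_P$-type. Each extension step has two parts. On the $P$-part, extend the partial isomorphism of $P_A$ inside $P_M$ using Fact \ref{fact:SummaryPAC}(3), which says that quantifier-free types of relatively algebraically closed subsets of a PAC structure determine full types; this is where the hypothesis that $P_A,P_B$ are $\acl_0$-closed enters. On the $M$-part, realize the required type independent from $P_M$ using beautifulness, and use stationarity over algebraically closed sets to pin down the type over $P_M$. The main obstacle is the $P$-part: extending an elementary map between relatively algebraically closed subsets of $P$ to a new element requires realizing a type in $P$. Here I would invoke the PAC property on a stationary type, using (\nameref{Star}), together with boundedness via Fact \ref{fact:SummaryPAC}(1) to control the Galois action.

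For (c), I would define $\ind^*$ by the right-hand side and check the Kim--Pillay axioms for it, using (d) for invariance and (a), (b) for closure. The independence theorem over models for $\ind^*$ would come from the independence theorem in the PAC structure $P$ (Polkowska's simplicity result) combined with stationarity in $T_0$ off $P$. Kim--Pillay then identifies $\ind^*$ with $\ind^1$.
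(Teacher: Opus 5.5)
The paper gives no proof of this Fact. It quotes it from Polkowska's treatment of PAC beautiful pairs \cite{Polkowska_2007}, where $T_0$ is assumed to eliminate imaginaries, in the same way Fact~\ref{fact:SummaryBP} is quoted from \cite{BenyaacovPillayVassiliev2003}. So you are attempting an independent derivation.

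Two parts of your plan are fine in outline. The back-and-forth for (d) works: on the $P$-part, apply PAC to the stationary type supplied by (\nameref{Star}); off $P_M$, use beautifulness. The Kim--Pillay strategy for (c) is also the standard one. Note, though, that amalgamating the $P$-parts uses Polkowska's simplicity theorem, so it needs boundedness of $P_M$. However, (a) and (b) do not work as written.

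\textbf{Item (a).} A minimal $D\subseteq P_M$ with $A\ind^0_D P_M$ is not canonical, and its elements need not be $\mcL_1$-algebraic over $A$. For example, take $T_0=\mathrm{ACF}$ and $P_M$ a PAC field, with $s\in P_A$ and $A\ind^0_s P_M$. For $k\in P_M\setminus\acl_0(As)$, the set $\{s+k,k\}$ is such a minimal $D$, yet $k\notin\acl_1(A)$. Nor is nfcp the relevant tool; its role is to make saturated models of $T_1$ beautiful.

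The missing idea is the canonical base. For each element $c$ of $\cb_0(A/\acl_0^{\mathrm{eq}}(P_M))$, let $e_c$ be the code of the finite set of conjugates of $c$ over $P_M$; it lies in $\dcl_0^{\mathrm{eq}}(P_M)$. It is fixed by every $\mcL_1$-automorphism fixing $A$. Such an automorphism preserves $P_M$ setwise, so it sends $\tp_0(A/\acl_0^{\mathrm{eq}}(P_M))$ to another nonforking extension of $\tp_0(A/P_M)$, which is a conjugate over $P_M$. With EI and $P_M=\dcl_0(P_M)$, each $e_c$ is a real tuple of $P_M\cap\acl_1(A)=P_A$. Since $c\in\acl_0^{\mathrm{eq}}(e_c)$, you get $\cb_0(A/\acl_0^{\mathrm{eq}}(P_M))\subseteq\acl_0^{\mathrm{eq}}(P_A)$, i.e.\ $A\ind^0_{P_A}P_M$. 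Without EI (this subsection only assumes TACSS), you would need a separate argument that these codes are represented in $P_M$.

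\textbf{Item (b).} Splitting according to whether $e\in P_M$ is the wrong case distinction. Suppose $e\in\acl_0(AP_M)\setminus(P_M\cup\acl_0(A))$, for instance $e\in\acl_0(P_M)\setminus P_M$, which exists because $P_M$ is not algebraically closed in $M$. Then $\tp_0(e/AP_M)$ is algebraic, so saturation of $M$ over $P_M$ yields only finitely many candidates. You must always move the $P$-part as well:
\begin{enumerate}
\item Replace $A$ by $\acl_0(A)$, which is still $P$-independent, and take an $\acl_0$-closed $P$-independent $E\supseteq Ae$.
\item By PAC, realize in $P_M$ copies of $P_E$ that are independent over $P_A$ from the $P$-parts of the previous copies. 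The relevant type is a nonforking extension of $\tp_0(P_E/P_A)$, which is stationary by (\nameref{Star}).
\item Realize the rest of $E$ over $A$ and the new $P$-part, independently from $P_M$ and the previous copies, using beautifulness.
\item Conclude with (d) that each copy is $\mcL_1$-conjugate to $E$ over $A$.
\end{enumerate}
By $P$-independence the copies are $\ind^0$-independent over $A$. Since $e\notin\acl_0(A)$, the copies of $e$ are pairwise distinct.
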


Again, Fact \ref{fact:SummaryPACBP} immediately yields (\ref{pairs:AlgClosure}), (\ref{pairs:RelAlgClos}) and (\ref{pairs:SameType}).
Moreover, we obtain (\ref{pairs:CaracIndep}) and (\ref{pairs:PIndependentRealizations}) in a similar way to the case of lovely pairs.

Finally, we use Lemma \ref{prop:RelNUniq} to see that if $D$ is an existentially closed subset of $P_M$, then any minimal completion to a relatively algebraically closed amalgamation system is stationary and hence realizable in $P$.
Thus, we have proved:

\begin{cor}
Let $T_0$ be a stable nfcp theory with quantifier elimination and PACFO.
Consider a theory $T_1$ of a bounded PAC beautiful pair in $T_0$ such that $(\star)$ holds.
If $Q$ is an existentially closed subset of a model of $T_1$ containing an elementary substructure of $P$ such that $T_0$ has $n$-existence over $Q$, then $T_1$ has $n$-existence over $Q$.
\hfill \qedsymbol
\end{cor}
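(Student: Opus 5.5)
The plan is to deduce the statement from Corollary \ref{cor:nExistPairs}, applied with $P=P_M$ and $T_1$ the theory of the PAC beautiful pair over an elementary substructure. Two things need checking: the five assumptions (\ref{pairs:AlgClosure})--(\ref{pairs:PIndependentRealizations}) of Subsection \ref{sec:StructurePIndependence}, and the closure of $\mcC(P_Q)$ under relatively algebraically closed $n$-amalgamation.

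\textbf{The assumptions.} Assumptions (\ref{pairs:AlgClosure}), (\ref{pairs:RelAlgClos}) and (\ref{pairs:SameType}) should follow directly from Fact \ref{fact:SummaryPACBP}(a), (b), (d). For (\ref{pairs:RelAlgClos}), if (d) does not cover it, I would use Fact \ref{fact:SummaryPAC}(2), since $P_M$ is a PAC substructure.

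For (\ref{pairs:CaracIndep}), I would start from Fact \ref{fact:SummaryPACBP}(c) and use forking calculus, as in the lovely pairs case:
\begin{itemize}
\item Suppose $a\ind^0_c b$ and $ab$ is $P$-independent. From $a\ind^0_{P_a} P_M$ and transitivity we get $a\ind^0_{cP_M} b$.
\item Conversely, suppose $a\ind^1_c b$. Combining $a\ind^0_{cP}b$ with the $P$-independence of $b$ gives $ab\ind^0_{P_aP_b}P_M$.
\end{itemize}

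For (\ref{pairs:PIndependentRealizations}), take $\opart p$ non-forking over $P_Q$ with $\widehat p$ realized in $P$. I would argue in three steps.
\begin{enumerate}
\item Realize the unique non-forking extension of $\widehat p$ to $P_M$ inside $P_M$ via the PAC property. This needs that extension to be stationary, which I would get from TACSS together with (\nameref{Star}) applied to relatively algebraically closed $P_Q$.
\item This yields $\opart a$ in $P_M$ with $\opart a\ind^0_{P_Q}Q$. Stationarity over $P_Q$ makes $\opart a$ realize $\opart p$.
\item Use saturation of $M$ over $P_M$ (the beautiful pair axiom) to realize $p(x)\cup\{\opart x=\opart a\}$ by some $a$ with $a\ind^0_{Q\opart a}P_M$. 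Transitivity with $Q\ind^0_{P_Q}P_M$ then gives $a\ind^0_{\opart a}P_M$.
\end{enumerate}

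\textbf{Closure of $\mcC(P_Q)$.} Since $Q$ is existentially closed and contains an elementary substructure, $P_Q$ is an existentially closed subset of $P_M$; this needs a short check, using (d) and the elementary substructure. Properties (\ref{smallhyp:RelExtension})--(\ref{smallhyp:StrongBdd}) of Subsection \ref{sec:RelUniq} then hold for $P_M$ by Fact \ref{fact:SummaryPAC} and boundedness.

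Given a relatively algebraically closed amalgamation system over $P_Q$, Proposition \ref{prop:RelNUniq} says that any minimal completion is stationary. The PAC property of $P_M$ then realizes it in $P_M$, so it lies in $\mcC(P_Q)$. Corollary \ref{cor:nExistPairs} then gives $n$-existence of $T_1$ over $Q$.

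I expect the main obstacle to be (\ref{pairs:PIndependentRealizations}), specifically the stationarity needed to realize in $P_M$ a non-forking extension to all of $P_M$. PAC only handles small parameter sets, so I would realize the extension over a small relevant piece of $P_M$ and then use saturation. A second delicate point is transferring existential closedness from $Q$ to $P_Q$.
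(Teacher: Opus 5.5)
Your proposal is correct and follows the paper's own route. You reduce to Corollary~\ref{cor:nExistPairs}, obtain (\ref{pairs:AlgClosure})--(\ref{pairs:PIndependentRealizations}) from Fact~\ref{fact:SummaryPACBP} and forking calculus as for lovely pairs (though your first bullet for (\ref{pairs:CaracIndep}) needs $ab\ind^0_{P_aP_b}P_M$, not just $a\ind^0_{P_a}P_M$), and obtain closure of $\mcC(P_Q)$ from Proposition~\ref{prop:RelNUniq} plus the PAC property. The obstacle you foresee in (\ref{pairs:PIndependentRealizations}) does not arise: its hypothesis already gives a realization $\opart a$ of $\widehat p$ in $P_M$, the $P$-independence $Q\ind^0_{P_Q}P_M$ then yields $\opart a\ind^0_{P_Q}Q$ at once, and stationarity from (\nameref{Star}) makes $\opart a$ realize $\opart p$, so you never need to realize a nonforking extension inside $P_M$.
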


\subsection{Generic predicate} 			\label{sec:GenericPredicate}
We conclude by providing another example of theory $T_1$ obtained by adding a new unary predicate to the theory $T_0$ to which Theorem \ref{thm:AbstractAmalgamationTransfer} applies.
We did not include it in the previous subsection, as it does not meet the hypotheses of Corollary \ref{cor:nExistPairs}.
Let $T_0$ be a complete simple $\mcL_0$-theory with quantifier elimination and elimination of $\exists^\infty$, and define $\mcL_1=\mcL_0\cup\{P\}$, where $P$ is a new unary predicate.
Chatzidakis and Pillay showed in \cite{Chatzidakis_Pillay_1998} that under these conditions, the theory $T_0$, considered in the language $\mcL_1$, has a model companion $T_{0,P}$.
Moreover, the completions of $T_{0,P}$ are given by describing $P\cap \acl_0(\emptyset)$. 
Let $T_1$ be one such completion and fix a $\kappa$-saturated model $(M, P_M)$ of $T_1$, with $\kappa > |T|$.
We assume without loss of generality that both $T_0$ and $T_1$ contain $P_M\cap \acl_0(\emptyset)$ as parameters.

\begin{fact} \cite[Corollaries 2.6]{Chatzidakis_Pillay_1998}
\label{fact:SummaryGenericPredicate}
\begin{enumerate}[(a)]
	\item The algebraic closure in $T_1$ coincides with the algebraic closure in $T_0$.
	\item Any two algebraically closed subsets of $M$ have the same $\mcL_1$-type if and only if they have the same $\mcL_1$-quantifier free type.
\end{enumerate}
\end{fact}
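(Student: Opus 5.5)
These are \cite[Corollaries 2.6]{Chatzidakis_Pillay_1998}. I would reprove them with one amalgamation principle and a back-and-forth argument. Throughout, $A$ denotes an $\acl_0$-closed subset of $M$; recall that $T_1$ contains $P_M\cap\acl_0(\emptyset)$ as parameters.

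\emph{Amalgamation principle.} Let $D\supseteq A$ be an $\acl_0$-closed subset of an elementary $\mcL_0$-extension of $M$ with $D\cap M=A$. Let $P_D\subseteq D$ be any predicate with $P_D\cap A=P_A$. I claim that $(D,P_D)$ embeds into $(M,P_M)$ over $A$ as an $\mcL_1$-structure.

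To prove it, I would take an $\mcL_0$-model $M'\succeq M$ containing $D$. On $M'$, interpret $P$ as $P_M\cup P_D$. This is well defined because $D\cap M=A$ and $P_D\cap A=P_A$, so the two pieces agree on their overlap.

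Then $(M,P_M)\subseteq (M',P_M\cup P_D)$ is an extension of models of $T_0$ with a predicate. Since $T_{0,P}$ is the model companion of $T_0$ in $\mcL_1$, its models are existentially closed. Hence $(M',P_M\cup P_D)$ embeds over $M$ into an elementary extension of $(M,P_M)$.

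Any finite part of $D$ is described by a quantifier-free $\mcL_1$-formula over $A$. This formula is consistent in an elementary extension of $(M,P_M)$, so it is realized in $M$. By $\kappa$-saturation of $M$ and compactness, the whole of $D$ is realized in $M$ over $A$, as required.

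\emph{Proof of (b).} Let $f\colon A\to B$ preserve the quantifier-free $\mcL_1$-type, with $A$ and $B$ both $\acl_0$-closed. By quantifier elimination in $T_0$, $f$ is $\mcL_0$-elementary. Given an element $c$, I would:
\begin{itemize}
\item pick $c'$ realizing $f(\tp_0(c/A))$ in an elementary $\mcL_0$-extension, chosen $\ind^0$-independent from $M$ over $B$;
\item set $D=\acl_0(Bc')$, and use independence over the $\acl_0$-closed set $B$ to get $D\cap M=B$;
\item extend $f$ to an $\mcL_0$-isomorphism $\tilde f\colon\acl_0(Ac)\to D$;
\item transport the predicate to $D$ via $P_D:=\tilde f(P_M\cap\acl_0(Ac))$, which satisfies $P_D\cap B=P_B$;
\item apply the amalgamation principle to embed $(D,P_D)$ into $M$ over $B$.
\end{itemize}
This extends $f$ to $\acl_0(Ac)$ while preserving quantifier-free $\mcL_1$-type. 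Doing the same with $f^{-1}$ gives the back direction. The resulting back-and-forth system of partial isomorphisms between small $\acl_0$-closed sets shows, by saturation, that $\tp_1(A)=\tp_1(B)$. The base case of the system is $\acl_0(\emptyset)$, which is handled by the parameters added to $T_1$.

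\emph{Proof of (a).} Clearly $\acl_0\subseteq\acl_1$. Conversely, let $c\notin A=\acl_0(A)$. I would take infinitely many realizations $c_i$ of $\tp_0(c/A)$, independent over $A$ in a big $\mcL_0$-extension and independent from $M$. Independence over the $\acl_0$-closed set $A$ gives that the sets $\acl_0(Ac_i)$ pairwise intersect in $A$ and meet $M$ only in $A$.

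On each $\acl_0(Ac_i)$ I would copy the predicate of $\acl_0(Ac)$. These copies are consistent on overlaps, since the overlaps are exactly $A$. Applying the amalgamation principle to $\acl_0(A c_i, i<\omega)$, and noting that independence gives the needed intersection with $M$, embeds all the copies into $M$ over $A$. By (b), all the images of the $c_i$ have $\tp_1(c/A)$. They are pairwise distinct, so $c\notin\acl_1(A)$.

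The main obstacle is the amalgamation principle. It requires checking that $P_M\cup P_D$ is coherent, which rests on the intersection conditions coming from independence over $\acl_0$-closed sets. It also requires that existential closedness of models of $T_{0,P}$ together with completeness of $T_1$ (via the $P\cap\acl_0(\emptyset)$ parameters) places the realization inside $M$. This is where the elimination of $\exists^\infty$ hypothesis, which guarantees that the model companion exists, is used.
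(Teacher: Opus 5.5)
Your proof is correct. The paper does not prove this Fact; it only cites \cite[Corollaries 2.6]{Chatzidakis_Pillay_1998}. Your argument is in substance the one given there:
\begin{enumerate}
\item place the new material so that it meets the old material exactly in an $\acl_0$-closed set;
\item define $P$ as the union of the two predicates, which is coherent because of that intersection condition;
\item use that models of the model companion $T_{0,P}$ are existentially closed among models of $T_0$ in $\mcL_1$.
\end{enumerate}

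The differences are only in packaging. Chatzidakis and Pillay amalgamate two whole models over $E=\acl_0(E)$ with intersection exactly $E$ and conclude directly by model completeness. You instead isolate an embedding lemma for small sets into the $\kappa$-saturated model and then run a back-and-forth.

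You also obtain the disjointness $D\cap M=A$ from forking independence in the simple theory $T_0$. That is legitimate in this paper's setting, but it uses more than necessary. Disjoint amalgamation over an algebraically closed set holds in an arbitrary theory (Neumann's lemma), so simplicity plays no role in this Fact.

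Finally, your closing remark misattributes the key step. Completeness of $T_1$, or the naming of $P_M\cap\acl_0(\emptyset)$, is not what places the realization inside $M$. What does it is that $(M,P_M)$ is existentially closed in $(M',P_M\cup P_D)$, combined with $\kappa$-saturation, exactly as in your own proof of the amalgamation principle.
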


In \cite{Chatzidakis_Pillay_1998}, the authors showed that $T_1$ is simple and that $\ind^1$ coincide with $\ind^0$.
Theorem \ref{thm:AbstractAmalgamationTransfer} allows us to describe an easy generalization of their proof of the independence theorem in $T_1$, showing that $\ind^1$ has $n$-existence for all integer $n$.

\medskip
Since any model of $T_1$ is a model of $T_0$, the hypotheses \ref{hyp:StrongBdd} and \ref{hyp:CoheirLemmaLight} are immediate.
Define $\mcG$ to simply be the class of all small subsets of $M$.
The hypotheses \ref{hyp:InteractionAlgClosure} - \ref{hyp:SameType} are then immediate consequences of Fact \ref{fact:SummaryGenericPredicate} an of the characterization of $\ind^1$.
Moreover, we can see by genericity of the predicate that all the annotated types $(p(x), \tilde{x})$ are fair, and \ref{hyp:ContinuityFSystems} follows immediately.

\begin{cor}
Let $T_0$ be a simple theory with quantifier elimination and elimination of $\exists^\infty$. 
Denote by $T_1$ the theory of any model of $T_0$ endowed with a generic predicate.
The theory $T_1$ is simple and has $n$-existence over an algebraically closed subset $A$ whenever $T_0$ has $n$-existence over $A$.
\hfill \qedsymbol
\end{cor}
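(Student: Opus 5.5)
The plan is to apply Theorem \ref{thm:AbstractAmalgamationTransfer} with $\mcG$ the class of all small subsets of $M$. The only hypothesis needing real work is \ref{hyp:ContinuityFSystems}, and that is where I would put the effort.

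\textbf{Routine hypotheses.} Since every model of $T_1$ has an $\mcL_0$-reduct that is a model of $T_0$, we may take $N=M$. Then \ref{hyp:StrongBdd} and \ref{hyp:CoheirLemmaLight} are trivial: relatively algebraically closed subsets are simply $\acl_0$-closed sets. For \ref{hyp:InteractionAlgClosure}, Fact \ref{fact:SummaryGenericPredicate}(a) gives $\acl_1=\acl_0$. Because $\mcG$ is closed under unions, \ref{hyp:CaracIndep} amounts to the coincidence $\ind^1=\ind^0$ proved by Chatzidakis and Pillay. For \ref{hyp:SameType}, if $a$ and $b$ have the same quantifier-free $\mcL_1$-type and $b$ is $\acl_1$-closed, then by quantifier elimination in $T_0$ they have the same $\mcL_0$-type. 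Hence $a$ is $\acl_0$-closed, so $\acl_1$-closed, and Fact \ref{fact:SummaryGenericPredicate}(b) finishes.

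\textbf{Hypothesis \ref{hyp:ContinuityFSystems}.} Here an annotation is just a subtuple $\tilde x$ of the variables specifying which coordinates lie in $P$. Every set is in $\mcG$, so strong fairness reduces to fairness of $(p_\top,\tilde x_\top)$. The claim to prove is that any annotated type $(p(x),\tilde x)$ over an $\acl$-closed $Q$, with $p$ non-algebraic outside $Q$ on the coordinates in the way required, is realized with $P\cap a=\tilde a$.

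The argument goes through genericity. Take any realization $a$ of $p$ in $M$. Since the completion $p_\top$ only involves variables already present in the $x_w$, and the $I_w$ are compatible, $I_\top$ is consistent as a quantifier-free $\mcL'$-type. By quantifier elimination in $T_0$, $p(x)\cup I_\top(x)$ together with the $\mcL_1$-diagram of $Q$ is consistent with $T_0$ in $\mcL_1$: we expand $\acl_0(Qa)$ by declaring exactly $\tilde a$, together with $P_Q$, to be in $P$. This expansion should be a model of the universal theory of $T_{0,P}$ extending $Q$ with its induced predicate.

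\textbf{Main obstacle.} The delicate point is consistency on $\acl_0(Q)\cap a$. The annotation must agree with $P$ on the part of $a$ that lies in $Q$, and it must not decide membership in $P$ for elements of $\acl_0(Q)\setminus Q$ inconsistently. I expect this to follow because each $a_w$ is a realization of $r_w$ over $Q$ that includes $Q$, so its annotation already agrees with $P_Q$. I would also use that $\acl_0(Qa)\cap\acl_0(\emptyset)$ lies in $Q$, whose $P$-part is fixed by the added parameters.

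Once this is settled, model completeness of $T_{0,P}$, in the form of existential closedness of the saturated model, lets us embed $\acl_0(Qa)$ with the chosen predicate into $(M,P_M)$ over $Q$. This produces a realization of $p_\top\cup I_\top$, so $(p_\top,I_\top)$ is fair and \ref{hyp:ContinuityFSystems} holds. Theorem \ref{thm:AbstractAmalgamationTransfer} then gives the corollary, with simplicity of $T_1$ coming from $3$-existence via the remark at the end of Subsection \ref{sec:VerifHyp}.
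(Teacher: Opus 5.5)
Your proposal is correct and follows the paper's own route: take $\mcG$ to be all small subsets, obtain \ref{hyp:StrongBdd}--\ref{hyp:SameType} from three ingredients (models of $T_1$ are models of $T_0$, Fact \ref{fact:SummaryGenericPredicate}, and $\ind^1=\ind^0$), and obtain \ref{hyp:ContinuityFSystems} from genericity of the predicate. Your existential-closedness embedding over $Q$ spells out the paper's one-line claim that the relevant annotated types are fair; it relies on $Q=\acl_0(Q)$, so a copy of $\acl_0(Qa)$ can be chosen meeting $M$ only in $Q$, and on the annotations agreeing with $P_Q$ on coordinates lying in $Q$, which you correctly identify as the only real constraint.
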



\smallskip

\begin{thebibliography}{10}

\bibitem{Ax1973}
{\sc Ax, J.}
\newblock The elementary theory of finite fields.
\newblock {\em Journal of Symbolic Logic 38}, 1 (1973), 162--163.

\bibitem{bartnick_2024}
{\sc Bartnick, C.}
\newblock Stationarity and elimination of imaginaries in stable and simple
  theories.
\newblock {\em Fundamenta Mathematicae 270\/} (2025), 277--299.

\bibitem{BenyaacovPillayVassiliev2003}
{\sc Ben-Yaacov, I., Pillay, A., and Vassiliev, E.}
\newblock Lovely pairs of models.
\newblock {\em Annals of Pure and Applied Logic 122}, 1 (2003), 235--261.

\bibitem{BerensteinCarmonaVassiliev_2017}
{\sc Berenstein, A., Carmona, J.F.,  and Vassiliev, E.}
\newblock Supersimple structures with a dense independent subset.
\newblock {\em Mathematical Logic Quarterly 63}, 6 (2017), 552--573.


\bibitem{BlossierHardoinMP_2017}
{\sc Blossier, T., Hardouin, C., and Martin-Pizarro, A.}
\newblock Sur les automorphismes born{\'e}s de corps munis d’op{\'e}rateurs.
\newblock {\em Mathematical Research Letters 24}, 4 (2017), 955--978.

\bibitem{Blossier_MartinPizarro_2019}
{\sc Blossier, T., and Martin-Pizarro, A.}
\newblock {Un critère simple}.
\newblock {\em Notre Dame Journal of Formal Logic 60}, 4 (2019), 639 -- 663.





\bibitem{Chatzidakis_1999}
{\sc Chatzidakis, Z.}
\newblock { Simplicity and Independence for Pseudo-Algebraically Closed
  Fields}.
\newblock {\em London Mathematical Society Lecture Note Series.} Cambridge University Press (1999) p.~41–62.
  

\bibitem{Chatzidakis_2001}
{\sc Chatzidakis, Z.}
\newblock Generic automorphisms of separably closed fields.
\newblock {\em Illinois Journal of Mathematics 45}, 3 (2001), 693--733.



\bibitem{ChatzidakisHrushovski_1999}
{\sc Chatzidakis, Z., and Hrushovski, E.}
\newblock Model theory of difference fields.
\newblock {\em Transactions of the American Mathematical Society 351}, 8 (1999), 2997--2071.
  

  

\bibitem{Chatzidakis_2004}
{\sc Chatzidakis, Z., and Hrushovski, E.}
\newblock Model theory of endomorphisms of separably closed fields.
\newblock {\em Journal of Algebra 281}, 2 (2004), 567--603.

\bibitem{Chatzidakis_Pillay_1998}
{\sc Chatzidakis, Z., and Pillay, A.}
\newblock Generic structures and simple theories.
\newblock {\em Annals of Pure and Applied Logic 95}, 1 (1998), 71--92.

\bibitem{DePiro_Kim_Millar_2005}
{\sc de~Piro, T., Kim, B., and Millar, J.}
\newblock Constructing the hyperdefinable group from the group configuration.
\newblock {\em Journal of Mathematical Logic 6\/} (2005).

\bibitem{Delon_1988}
{\sc Delon, F.}
\newblock Idéaux et types sur les corps séparablement clos.
\newblock {\em Mémoires de la Société Mathématique de France. Nouvelle
  Série 33\/} (1988).

\bibitem{Ershov_1967}
{\sc Ershov, Y.}
\newblock Fields with a solvable theory.
\newblock {\em Soviet Mathematics. Doklady 8\/} (1967).

\bibitem{Hoffmann_Kowalski_2023}
{\sc Hoffmann, D., and Kowalski, P.}
\newblock {PAC} structures as invariants of finite group actions.
\newblock {\em The Journal of Symbolic Logic\/} (2023), 1--34.

\bibitem{hrushovski_2012}
{\sc Hrushovski, E.}
\newblock Groupoids, imaginaries and internal covers.
\newblock {\em Turkish Journal of Mathematics 36, 2} (2012),  173--198.


\bibitem{HrushovskiPFFieldsRelStructures}
{\sc Hrushovski, E.}
\newblock Pseudo-finite fields and related structures.
\newblock {\em Model theory and applications}, vol.~11 of {\em Quad. Mat.}
  Aracne, Rome (2002) pp.~151--212.
  

\bibitem{KimPillay1997}
{\sc Kim, B., and Pillay, A.}
\newblock Simple theories.
\newblock {\em Ann. Pure Appl. Log. 88\/} (1997), 149--164.

\bibitem{Kolesnikov_2004_thesis}
{\sc Kolesnikov, A.}
\newblock { Generalized amalgamation in simple theories and characterization
  of dependence in non-elementary classes}.
\newblock {\em Carnegie Mellon University} (2004).

\bibitem{lang2012algebra}
{\sc Lang, S.}
\newblock {\em Algebra}, vol.~211.
\newblock Springer Science \& Business Media (2012).

\bibitem{Sanchez_Tressl_2020_DiffLargeFields}
{\sc Le{\'o}n~S{\'a}nchez, O., and Tressl, M.}
\newblock Differentially large fields.
\newblock {\em Algebra \& Number Theory 18}, 2 (2024), 249--280.

\bibitem{Ludwig_2026}
{\sc Ludwig, S.~M.}
\newblock Higher amalgamation in $\mathrm{ACFA}^{+}$.
\newblock {\em arXiv:2601.05096\/} (2026).

\bibitem{marker}
{\sc Marker, D.}
\newblock {\em Model theory: an introduction}, vol.~217.
\newblock Springer Science \& Business Media (2002).

\bibitem{Marker2000ModelTO}
{\sc Marker, D., et~al.}
\newblock Model theory of differential fields.
\newblock {\em Model theory of fields 5\/} (1996), 38--113.

\bibitem{MartinPizarro_Bourbaki}
{\sc Martin-Pizarro, A.}
\newblock Model theory, differential algebra and functional transcendence,
  after {F}reitag, {J}aoui, and {M}oosa.
\newblock {\em Séminaire Bourbaki n°1245\/} (2025).

\bibitem{Mcgrail_2000}
{\sc McGrail, T.}
\newblock The model theory of differential fields with finitely many commuting
  derivations.
\newblock {\em The Journal of Symbolic Logic 65}, 2 (2000), 885--913.

\bibitem{Medina_2007}
{\sc Medina, R.~B.}
\newblock Differentially closed fields of characteristic zero with a generic
  automorphism.
\newblock {\em Revista de Matem{\'a}tica: Teor{\'\i}a y Aplicaciones 14}, 1
  (2007), 81--100.

\bibitem{MoosaScanlon_2014}
{\sc Moosa, R., and Scanlon, T.}
\newblock Model theory of fields with free operators in characteristic zero.
\newblock {\em Journal of Mathematical Logic 14}, 02 (2014), 1450009.


\bibitem{Pillay_Polkowska_2006}
{\sc Pillay, A., and Polkowska, D.}
\newblock On {PAC} and bounded substructures of a stable structure.
\newblock {\em Journal of Symbolic Logic 71\/} (2006).

\bibitem{Poizat_1983}
{\sc Poizat, B.}
\newblock Paires de structures stables.
\newblock {\em Journal of Symbolic Logic 48}, 2 (1983), 239–249.

\bibitem{Polkowska_2007}
{\sc Polkowska, O. P. N.~M.}
\newblock On simplicity of bounded pseudoalgebraically closed structures.
\newblock {\em Journal of Mathematical Logic 7}, 2 (2007), 173--193.

\bibitem{Shelah_83_ClassificationTheory}
{\sc Shelah, S.}
\newblock Classification theory for nonelementary classes {I}: {T}he number of
  uncountable models of $\psi \in {L}_{ \omega_1, \omega}$. {P}art {B}.
\newblock {\em Israel Journal of Mathematics 46\/} (1983), 241--273.

\bibitem{Srour_IndependenceSCF}
{\sc Srour, G.}
\newblock The independence relation in separably closed fields.
\newblock {\em The Journal of Symbolic Logic 51}, 3 (1986), 715--725.

\bibitem{Wood_1976}
{\sc Wood, C.}
\newblock The model theory of differential fields revisited.
\newblock {\em Israel Journal of Mathematics 25}, 3 (1976), 331--352.

\end{thebibliography}
\end{document}